\numberwithin{equation}{section}
\newtheorem{theorem}[equation]{Theorem} 
\newtheorem*{theorem*}{Theorem}
\newtheorem{lemma}[equation]{Lemma}
\newtheorem{proposition}[equation]{Proposition}
\newtheorem*{corollary*}{Corollary}
\theoremstyle{remark}
\newtheorem{definition}[equation]{Definition}
\newtheorem{example}[equation]{Example}
\newtheorem{notation}[equation]{Notation}
\theoremstyle{remark}
\newtheorem{remark}[equation]{Remark}
\newcommand{\cA}{{\mathcal A}}
\newcommand{\cB}{{\mathcal B}}
\newcommand{\cC}{{\mathcal C}}
\newcommand{\cD}{{\mathcal D}}
\newcommand{\cE}{{\mathcal E}}
\newcommand{\cF}{{\mathcal F}}
\newcommand{\cG}{{\mathcal G}}
\newcommand{\cH}{{\mathcal H}}
\newcommand{\cO}{{\mathcal O}}
\newcommand{\Spt}{\mathrm{Spt}}
\newcommand{\add}{\mathsf{add}}
\newcommand{\bbA}{\mathbb{A}}
\newcommand{\bbG}{\mathbb{G}}
\newcommand{\bbP}{\mathbb{P}}
\newcommand{\bbZ}{\mathbb{Z}}
\newcommand{\bfR}{\mathbf{R}}
\newcommand{\bfL}{\mathbf{L}}
\DeclareMathOperator{\id}{id}
\DeclareMathOperator{\Mod}{Mod}
\DeclareMathOperator{\Fun}{Fun} 
\newcommand{\SmProj}{\mathsf{SmProj}} 
\newcommand{\Sep}{\mathsf{Sep}} 
\newcommand{\NChow}{\mathsf{NChow}} 
\newcommand{\bbK}{I\mspace{-6.mu}K}
\newcommand{\dgcat}{\mathsf{dgcat}}
\newcommand{\perf}{\mathrm{perf}}
\newcommand{\dg}{\mathsf{dg}}
\newcommand{\Hom}{\mathrm{Hom}}
\newcommand{\End}{\mathrm{End}}
\newcommand{\rep}{\mathrm{rep}}
\newcommand{\dgHo}{\mathsf{H}^0}
\newcommand{\Ho}{\mathrm{Ho}}
\newcommand{\Hmo}{\mathsf{Hmo}}
\newcommand{\op}{\mathrm{op}}
\newcommand{\too}{\longrightarrow}
\newcommand{\ie}{\textsl{i.e.}\ }
\newcommand{\eg}{\textsl{e.g.}}
\begin{document}

\title[Invariants of toric and homogeneous varieties via NC motives]{Additive invariants of toric and \\twisted projective homogeneous varieties\\ via noncommutative motives}

\author{Gon{\c c}alo~Tabuada}

\address{Gon{\c c}alo Tabuada, Department of Mathematics, MIT, Cambridge, MA 02139, USA}
\email{tabuada@math.mit.edu}
\urladdr{http://math.mit.edu/~tabuada/}
\thanks{The author was partially supported by the NEC Award-2742738}
\subjclass[2000]{11E81, 14A22, 14L17, 14M25, 18F25, 19D55}
\date{\today}

\keywords{Homogeneous varieties, toric varieties, twisted forms, torsors, noncommutative motives, algebraic $K$-theory, cyclic homology, noncommutative algebraic geometry.}

\abstract{I. Panin proved in the nineties that the algebraic $K$-theory of twisted projective homogeneous varieties can be expressed in terms of central simple algebras. Later, Merkurjev and Panin described the algebraic $K$-theory of toric varieties as a direct summand of the algebraic $K$-theory of separable algebras. In this article, making use the recent theory of noncommutative motives, we extend Panin and Merkurjev-Panin computations from algebraic $K$-theory to every additive invariant. As a first application, we fully compute the cyclic homology (and all its variants) of twisted projective homogeneous varieties. As a second application, we show that the noncommutative motive of a twisted projective homogeneous variety is trivial if and only if the Brauer classes of the associated central simple algebras are trivial. Along the way we construct a fully-faithful $\otimes$-functor from Merkurjev-Panin's motivic category to Kontsevich's category of noncommutative Chow motives, which is of independent interest.}}

\maketitle 
\vskip-\baselineskip
\vskip-\baselineskip
\vskip \baselineskip

\dedicatory{{\it Dedicated to the memory of Daniel Kan.}}
\section{Introduction}\label{sec:introduction}
\subsection*{Algebraic $K$-theory of twisted projective homogeneous varieties}
Let $G$ be a split semisimple algebraic group over a field $k$, $P \subset G$ a parabolic subgroup, and $\gamma: \mathfrak{g}:=\mathrm{Gal}(k_{\mathrm{sep}}/k) \to G(k_{\mathrm{sep}})$ a $1$-cocycle. Out of this data one can construct the projective homogeneous variety $\cF:=G/P$ as well as its twisted form ${}_\gamma \cF$. Let $\widetilde{G}$ and $\widetilde{P}$ be the universal covers of $G$ and $P$, $R(\widetilde{G})$ and $R(\widetilde{P})$ the associated representation rings, $n$ the index $[W(\widetilde{G}):W(\widetilde{P})]$ of the Weyl groups, $\widetilde{Z}$ the center of $\widetilde{G}$, and finally $\mathsf{Ch}:=\Hom(\widetilde{Z},\bbG_m)$ the character group. Under these notations, Panin proved in \cite[Thm.~4.2]{Panin} that every $\mathsf{Ch}$-homogeneous basis $\rho_1, \ldots, \rho_n$ of $R(\widetilde{P})$ over $R(\widetilde{G})$ gives rise to an isomorphism
\begin{eqnarray}\label{eq:iso-main}
\bigoplus_{i=1}^n K_\ast(A_{\chi(i),\gamma}) \stackrel{\sim}{\too} K_\ast({}_\gamma \cF)\,,
\end{eqnarray}
where $A_{\chi(i),\gamma}$ stands for the Tits' central simple algebra associated to $\rho_i$. Panin's computation \eqref{eq:iso-main} is a landmark in algebraic $K$-theory. It generalizes previous results of Grothendieck \cite{Grothendieck} on flag varieties, of Quillen \cite{Quillen} on Severi-Brauer varieties, of Swan \cite{Swan} and Kapranov \cite{Kapranov} on quadrics hypersurfaces, and of Levine-Srinivas-Weyman \cite{Levine} on twisted Grassmann varieties.
\subsection*{Algebraic $K$-theory of toric varieties}
Let $S$ be a reductive algebraic group over $k$ and $A$ the associated division separable algebra $\prod_\rho A_\rho := \prod_\rho \End_S(W_\rho)$, where the product is taken over all irreducible representation $\rho:S \to \mathrm{GL}(W_\rho)$. Given an $S$-torsor $\pi:U \to X$ over a smooth projective variety $X$, assume that there exists an $S$-equivariant open imbedding of $U$ into an affine space on which $S$ acts linearly. Under these assumptions, Merkurjev and Panin proved in \cite[Thm.~4.2]{MP} that $K_\ast(X)$ is a direct summand of $K_\ast(A)$. Examples include toric models (see Example \ref{example:toric}) and more generally toric varieties (see Remark~\ref{rk:toric}). 
\subsection*{Additive invariants}
A {\em dg category} $\cA$, over a field $k$, is a category enriched over complexes of $k$-vector spaces; see \S\ref{sec:dg}. Let $\dgcat$ be the category of (small) dg categories. Every (dg) algebra $A$ gives naturally rise to a dg category with a single object. Another source of examples is provided by schemes since the derived category of perfect complexes $\perf(X)$ of every quasi-compact quasi-separated scheme $X$ admits a canonical dg enhancement\footnote{When $X$ is quasi-projective this dg enhancement is unique; see Lunts-Orlov \cite[Thm.~2.12]{LO}.} $\perf_\dg(X)$; see Keller \cite[\S4.6]{ICM-Keller}.

Given a dg category $\cA$, let $T(\cA)$ be the dg category of pairs $(i,x)$, where $i\in \{1,2\}$ and $x \in \cA$. The complex of morphisms in $T(\cA)$ from $(i,x)$ to $(i',x')$ is given by $\cA(x,x')$ if $i \leq i'$ and is zero otherwise. Composition is induced from $\cA$. Intuitively speaking, $T(\cA)$ ``dg categorifies'' the notion of upper triangular matrix.  Note that we have two inclusion dg functors $i_1: \cA \hookrightarrow T(\cA)$ and $i_2: \cA \hookrightarrow T(\cA)$.
\begin{definition}\label{def:additive}
Let $E:\dgcat\to \mathsf{D}$ be a functor with values in an additive category. We say that $E$ is an {\em additive invariant} if it satisfies the following two conditions:
\begin{itemize}
\item[(i)] it sends {\em derived Morita equivalences} (see \S\ref{sec:dg}) to isomorphisms;
\item[(ii)] given any dg category $\cA$, the inclusion dg functors induce an isomorphism\footnote{Condition (ii) can be equivalently formulated in terms of semi-orthogonal decompositions in the sense of Bondal-Orlov; see \cite[Thm.~6.3(4)]{IMRN}.}
$$ [E(i_1)\,\,E(i_2)]:E(\cA) \oplus E(\cA) \stackrel{\sim}{\too} E(T(\cA))\,.$$
\end{itemize}
\end{definition}
Examples of additive invariants include:
\begin{itemize}
\item[(i)] The mixed complex functor $C:\dgcat \to \cD(\Lambda)$ with values in the derived category of mixed complexes; see Keller \cite[\S1.5 Thm. b) and \S1.12]{Exact}.
\item[(ii)] The Hochschild homology functor $HH: \dgcat \to \cD(k)$ (with values in the derived category of $k$), the cyclic homology functor $HC:\dgcat \to \cD(k)$, the periodic cyclic homology functor $HP:\dgcat \to \cD_{\bbZ/2}(k)$ (with values in the derived category of $\bbZ/2$-graded complexes of $k$-vector spaces), and the negative cyclic homology functor $HN:\dgcat \to \cD(k)$; see Keller \cite[\S2.2]{Exact2}.
\item[(iii)] The connective algebraic $K$-theory functor $K:\dgcat \to \Ho(\Spt)$ with values in the homotopy category of spectra; see Thomason-Trobaugh \cite[Thm.~1.9.8]{TT} and Waldhausen \cite[Thm.~1.4.2]{Wald}.
\item[(iv)] The mod-$l$ algebraic $K$-theory functor $K(-;\bbZ/l): \dgcat \to \Ho(\Spt)$, with $l$ an integer $\geq 2$; see \cite[\S3]{Galois}. 
\item[(v)] The nonconnective algebraic $K$-theory functor $\bbK:\dgcat \to \Ho(\Spt)$; see Schlichting \cite[\S7 Thm. 4 and \S12.3 Prop. 3]{Negative}.
\item[(vi)] The homotopy algebraic $K$-theory functor $KH:\dgcat \to \Ho(\Spt)$; see \cite[Prop.~3.3]{Galois}.
\item[(vii)] The topological Hochschild homology functor $THH:\dgcat \to \Ho(\Spt)$ and the topological cyclic homology functor $TC:\dgcat \to \Ho(\Spt)$; see Blumberg-Mandell \cite[Prop. 3.10 and Thm.~10.8]{BM} and \cite[Prop.~8.9]{MacLane}.
\item[(viii)] The universal additive invariant $U :\dgcat \to \Hmo_0$ with values in the category of noncommutative motives; see \S\ref{sec:NCmotives}. 
\end{itemize}
When applied to $A$, respectively to $\perf_\dg(X)$, the above additive invariants (i)-(vii) reduce to the classical invariants of (dg) algebras, respectively of schemes: consult \cite[Thm.~5.2]{ICM-Keller} for (i)-(ii), \cite[Thm.~5.1]{ICM-Keller} for (iii) and (v), \cite[Example~2.13]{Products} for (iv), \cite[Prop.~2.3]{Fundamental} for (vi), and \cite[Thm.~1.3]{BM} for (vii). For this reason, we will write $E(X)$ instead of $E(\perf_\dg(X))$.
\section{Statement of results}\label{sec:results}
\begin{theorem}\label{thm:main}
\begin{itemize}
\item[(i)] Let $G,P, \gamma$ be as above and $E:\dgcat \to \mathsf{D}$ an additive invariant. Under these assumptions, every $\mathsf{Ch}$-homogeneous basis $\rho_1, \ldots, \rho_n$ of $R(\widetilde{P})$ over $R(\widetilde{G})$ gives rise to an isomorphism
\begin{equation}\label{eq:iso-main-2}
\bigoplus_{i=1}^n E(A_{\chi(i),\gamma}) \stackrel{\sim}{\too} E({}_\gamma \cF)\,.
\end{equation}
\item[(ii)] Let $S, \pi, X$ be as above and $E:\dgcat \to \mathsf{D}$ an additive invariant. Under these assumptions, $E(X)$ is a direct summand of $E(A)$. 
\end{itemize}
\end{theorem}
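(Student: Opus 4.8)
The plan is to reduce both statements to a single decomposition in the category of noncommutative motives, and then to transport Panin's and Merkurjev--Panin's computations along a suitable functor. First, I would invoke the universal property recalled in \S\ref{sec:NCmotives}: the functor $U:\dgcat\to\Hmo_0$ is the \emph{universal} additive invariant, so every additive invariant $E:\dgcat\to\mathsf{D}$ factors through $U$ via an (essentially unique) additive functor $\overline{E}:\Hmo_0\to\mathsf{D}$. Since additive functors preserve finite direct sums and split idempotents --- hence also the relation of being a direct summand --- it suffices to establish in $\Hmo_0$: (i) an isomorphism $U({}_\gamma\cF)\simeq\bigoplus_{i=1}^n U(A_{\chi(i),\gamma})$ induced by the chosen $\mathsf{Ch}$-homogeneous basis, and (ii) that $U(X)$ is a direct summand of $U(A)$. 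Moreover, since ${}_\gamma\cF$ and $X$ are smooth projective $k$-varieties and $A_{\chi(i),\gamma}$ and $A$ are finite-dimensional separable $k$-algebras, all the dg categories in sight are smooth and proper, so all these objects already live in the subcategory $\NChow$ of noncommutative Chow motives sitting inside $\Hmo_0$.

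Next, I would recast Panin's isomorphism \eqref{eq:iso-main} and Merkurjev--Panin's splitting as decompositions in the motivic categories introduced by Panin \cite{Panin} and by Merkurjev--Panin \cite{MP}. These are additive symmetric monoidal categories, assembled from representation-theoretic and equivariant $K$-theoretic data, whose objects include ``motives'' $M(Y)$ of smooth projective varieties $Y$ and ``motives'' $M(B)$ of separable $k$-algebras $B$, and whose morphism groups are Grothendieck groups of suitable categories of bimodules/correspondences. In this language, a $\mathsf{Ch}$-homogeneous basis $\rho_1,\ldots,\rho_n$ produces an isomorphism $M({}_\gamma\cF)\simeq\bigoplus_{i=1}^n M(A_{\chi(i),\gamma})$, while the stated hypotheses on the torsor $\pi$ produce a splitting exhibiting $M(X)$ as a direct summand of $M(A)$.

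The heart of the argument is then the construction of a fully faithful additive symmetric monoidal functor $\Phi$ from this motivic category to $\NChow$ with $\Phi(M(Y))=U(\perf_\dg(Y))$ and $\Phi(M(B))=U(B)$. On objects $\Phi$ is forced; on morphisms one compares the Grothendieck groups defining the $\Hom$'s of the motivic category with the noncommutative correspondences $\Hom_{\NChow}(U(\perf_\dg Y),U(\perf_\dg Z))\simeq K_0(\perf_\dg(Y\times Z))$ (and their analogues involving separable algebras), drawing on the $K$-theoretic computations of \cite{Panin,MP}. One then checks that composition of correspondences matches convolution of bimodules, that $\Phi$ preserves identities and tensor products, and that the induced map on $\Hom$-groups is bijective.

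Granting $\Phi$, the theorem follows formally: applying the additive functor $\Phi$ to the two decompositions above transports them to $\NChow\subseteq\Hmo_0$, yielding $U({}_\gamma\cF)\simeq\bigoplus_{i=1}^n U(A_{\chi(i),\gamma})$ and exhibiting $U(X)$ as a direct summand of $U(A)$; composing with the additive functor $\overline{E}$ and invoking the convention $E(X)=E(\perf_\dg(X))$ then gives \eqref{eq:iso-main-2} and part (ii). I expect the main obstacle to be precisely the construction and full faithfulness of $\Phi$: matching the essentially representation-theoretic $\Hom$-groups of Merkurjev--Panin's motivic category with honest $K_0$'s of bimodule categories over $k$, and verifying on the nose all the compatibilities with composition, identities and the symmetric monoidal structure. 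Once $\Phi$ is in place, the remaining steps are routine.
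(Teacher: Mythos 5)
Your proposal follows essentially the same route as the paper: factor $E$ through the universal additive invariant $U$ via $\overline{E}:\Hmo_0\to\mathsf{D}$, lift Panin's and Merkurjev--Panin's decompositions to their motivic category, and transport them to $\NChow\subset\Hmo_0$ along a $\otimes$-functor (the paper's $\Theta:\underline{\cC}\to\NChow$, constructed via the Fourier--Mukai bimodules $[\cF]\mapsto[{}_{\Phi_\cF}\mathsf{B}]$ on the dg categories $\perf_\dg(X,A)$), exactly as you outline. The only remark worth making is that full faithfulness of $\Theta$, while true and of independent interest, is not actually needed for this theorem --- well-definedness, additivity, and compatibility with the functors $\Phi$ and $\Psi$ suffice to transport the isomorphism and the splitting.
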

\begin{remark}[Quasi-split case]\label{rk:quasi-split}
When $G$ is a {\em quasi}-split algebraic group, Panin proved in \cite[Thm.~12.4]{Panin} that a computation similar to \eqref{eq:iso-main} also holds. In this generality, the algebras $A_{\chi(i),\gamma}$ are no longer central simple but only separable. The analogue of isomorphism \eqref{eq:iso-main-2} (with exactly the same proof) also holds.
\end{remark}
The proof of Theorem \ref{thm:main} is based on the recent theory of noncommutative motives (see \S\ref{sec:NCmotives}) and on a fully-faithful $\otimes$-functor from Merkurjev-Panin's motivic category to Kontsevich's category of noncommutative Chow motives (see \S\ref{sec:MP-K}). 
\section{Applications}\label{sec:applications}
\subsection*{Twisted projective homogeneous varieties}
By applying \eqref{eq:iso-main-2} to the above examples (i)-(viii) of additive invariants we obtain the (concrete) isomorphisms:
$$
\begin{array}{lcl}
\bigoplus_{i=1}^n C(A_{\chi(i),\gamma})\simeq C({}_\gamma \cF) && \\
\bigoplus_{i=1}^n HH(A_{\chi(i),\gamma})\simeq HH({}_\gamma \cF)  && \bigoplus_{i=1}^n HH_\ast(A_{\chi(i),\gamma})\simeq HH_\ast({}_\gamma \cF) \\
\bigoplus_{i=1}^n HC(A_{\chi(i),\gamma})\simeq HC({}_\gamma \cF)  && \bigoplus_{i=1}^n HC_\ast(A_{\chi(i),\gamma})\simeq HC_\ast({}_\gamma \cF) \\
\bigoplus_{i=1}^n HP(A_{\chi(i),\gamma})\simeq HP({}_\gamma \cF)  && \bigoplus_{i=1}^n HP_\ast(A_{\chi(i),\gamma})\simeq HP_\ast({}_\gamma \cF) \\
\bigoplus_{i=1}^n HN(A_{\chi(i),\gamma})\simeq HN({}_\gamma \cF)  && \bigoplus_{i=1}^n HN_\ast(A_{\chi(i),\gamma})\simeq HN_\ast({}_\gamma \cF) \\
\bigoplus_{i=1}^n K(A_{\chi(i),\gamma})\simeq K({}_\gamma \cF)  && \bigoplus_{i=1}^n K_\ast(A_{\chi(i),\gamma})\simeq K_\ast({}_\gamma \cF) \\
\bigoplus_{i=1}^n K(A_{\chi(i),\gamma}; \bbZ/l)\simeq K({}_\gamma \cF; \bbZ/l)  && \bigoplus_{i=1}^n K_\ast(A_{\chi(i),\gamma};\bbZ/l)\simeq K_\ast({}_\gamma \cF;\bbZ/l) \\
\bigoplus_{i=1}^n \bbK(A_{\chi(i),\gamma})\simeq \bbK({}_\gamma \cF)  && \bigoplus_{i=1}^n \bbK_\ast(A_{\chi(i),\gamma})\simeq \bbK_\ast({}_\gamma \cF) \\
\bigoplus_{i=1}^n KH(A_{\chi(i),\gamma})\simeq KH({}_\gamma \cF)  && \bigoplus_{i=1}^n KH_\ast(A_{\chi(i),\gamma})\simeq KH_\ast({}_\gamma \cF) \\
\bigoplus_{i=1}^n THH(A_{\chi(i),\gamma})\simeq THH({}_\gamma \cF)  && \bigoplus_{i=1}^n THH_\ast(A_{\chi(i),\gamma})\simeq THH_\ast({}_\gamma \cF) \\
\bigoplus_{i=1}^n TC(A_{\chi(i),\gamma})\simeq TC({}_\gamma \cF)  && \bigoplus_{i=1}^n TC_\ast(A_{\chi(i),\gamma})\simeq TC_\ast({}_\gamma \cF) \\
\bigoplus_{i=1}^n U(A_{\chi(i),\gamma}) \simeq U({}_\gamma \cF) &&
\end{array}
$$
Note that the left-hand-side isomorphisms enhance the right-hand-side ones. In particular, we obtain a spectral enhancement $\bigoplus_{i=1}^n K(A_{\chi(i),\gamma})\simeq K({}_\gamma \cF)$ of Panin's original computation \eqref{eq:iso-main}. In what concerns cyclic homology (and all its variants), we have the following complete computation:
\begin{theorem}\label{thm:computation}
Under the assumptions of Theorem~\ref{thm:main}(i), we have:
$$
\begin{array}{lcl}
\bigoplus_{i=1}^n C(k) \simeq C({}_\gamma \cF) && \\
\bigoplus _{i=1}^n HH(k) \simeq HH({}_\gamma \cF) && \bigoplus _{i=1}^n HC(k) \simeq HC({}_\gamma \cF) \\
\bigoplus _{i=1}^n HP(k) \simeq HP({}_\gamma \cF) && \bigoplus _{i=1}^n HN(k) \simeq HN({}_\gamma \cF)\,. 
\end{array}
$$
\end{theorem}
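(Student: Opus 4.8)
The plan is to combine Theorem~\ref{thm:main}(i) with the (classical) computation of the mixed complex of a central simple algebra. First I would apply Theorem~\ref{thm:main}(i) to the additive invariant $E = C$ (the mixed complex functor), obtaining an equivalence $\bigoplus_{i=1}^n C(A_{\chi(i),\gamma}) \simeq C({}_\gamma \cF)$ in the derived category $\cD(\Lambda)$ of mixed complexes. Since the Hochschild, cyclic, periodic and negative cyclic homologies $HH$, $HC$, $HP$, $HN$ are each obtained from $C$ by post-composition with a fixed functor out of $\cD(\Lambda)$ (forget the Connes operator; pass to the cyclic, periodic, resp. negative cyclic complex of a mixed complex), and since these functors preserve the finite direct sum $\bigoplus_{i=1}^n$, it suffices to establish a single equivalence of mixed complexes $C(A_{\chi(i),\gamma}) \simeq C(k)$ for each $i$; the whole statement then follows.

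To prove $C(A) \simeq C(k)$ for $A := A_{\chi(i),\gamma}$, I would use that $A$ is a central simple $k$-algebra. It is in particular separable over $k$, hence projective over $A \otimes_k A^{\op}$, so $HH_n(A) = \mathrm{Tor}_n^{A\otimes_k A^{\op}}(A,A)$ vanishes for $n > 0$; thus the $b$-homology of the mixed complex $C(A)$ is concentrated in degree $0$, where it equals $HH_0(A) = A/[A,A]$. I would then check that $A/[A,A] \cong k$: the quotient $A/[A,A]$ commutes with field extensions, and over a splitting field $K/k$ one has $A\otimes_k K \cong M_r(K)$ with $M_r(K)/[M_r(K),M_r(K)] \cong K$ one-dimensional, so $\dim_k A/[A,A] = 1$ and the reduced trace $A \to k$ induces an isomorphism $A/[A,A] \cong k$. (This holds in every characteristic; note that when $\mathrm{char}(k)$ divides $r$ the unit $k\to A$ does \emph{not} induce this isomorphism, which is why one cannot argue naively through the unit map.) Hence $H_\ast(C(A))$ is $k$, concentrated in degree $0$.

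It remains to promote this homology computation to an equivalence of mixed complexes $C(A) \simeq (k[0], b=0, B=0) = C(k)$. For this I would invoke the formality of a mixed complex whose $b$-homology is concentrated in a single degree: over the field $k$, a deformation retract of $C(A)$ onto its homology transfers the $\Lambda$-module structure to an $A_\infty$-$\Lambda$-module structure on $k[0]$ that is $A_\infty$-equivalent to $C(A)$, and since the extra generator of $\Lambda$ has internal degree $\pm 1$ while $H_\ast(C(A))$ sits in a single degree, every transferred operation — in particular the transferred Connes operator — is forced to vanish for degree reasons; thus $C(A)\simeq C(k)$. I expect this last formality step to be the main obstacle, since it is exactly what upgrades the classical equality of cyclic homology groups of a central simple algebra and of $k$ (and its $HH$, $HP$, $HN$ analogues) to the enhanced, complex-level equivalences recorded in Theorem~\ref{thm:computation}.
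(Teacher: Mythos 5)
Your proof is correct, and its overall architecture agrees with the paper's: apply \eqref{eq:iso-main-2} to the cyclic-type invariants and reduce to showing that the mixed complex of each Tits algebra $A_{\chi(i),\gamma}$ agrees with $C(k)$ in $\cD(\Lambda)$. Where you diverge is in how that key computation is carried out. The paper proves a more general statement for arbitrary separable algebras (Theorem~\ref{thm:degree}: $C(A)\simeq C(k)\otimes HH_0(A)$, and likewise for $HH,HC,HP,HN$) by writing down an explicit chain-level map $HH(A)\to HH(k)\otimes A/[A,A]$, checking it is a quasi-isomorphism compatible with the cyclic operator, and then identifying $HH_0(A)\simeq k$ for central simple $A$ via the Morita equivalence $\Mod(k)\simeq\Mod(A^\op\otimes A)$ (Proposition~\ref{prop:zero}). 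You instead argue intrinsically: separability gives $\mathrm{Tor}$-vanishing, hence $b$-homology concentrated in degree $0$; the reduced trace identifies $A/[A,A]$ with $k$ in all characteristics (your caveat about the unit map when $\mathrm{char}(k)$ divides the degree is well taken, and harmless since only an abstract isomorphism is claimed); and a formality argument upgrades this to an equivalence in $\cD(\Lambda)$. Both routes are valid; the paper's explicit map has the advantage of directly producing the tensor decomposition $C(k)\otimes HH_0(A)$ needed later for Theorems~\ref{thm:computation-1} and~\ref{thm:computation-direct}, while yours is more conceptual. One remark: the formality step you flag as the main obstacle is in fact elementary here and needs no $A_\infty$ transfer. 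The mixed complex of an algebra is concentrated in non-negative degrees, so when its $b$-homology sits in degree $0$ the strict projection onto $H_0$ (equivalently, the good truncation) is already a map of dg $\Lambda$-modules --- compatibility with the Connes operator is automatic for degree reasons --- and it is a quasi-isomorphism; your degree-counting intuition is exactly right, just realizable strictly.
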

As a consequence of Theorem~\ref{thm:computation} one obtains the isomorphisms:
\begin{eqnarray*}
HH_j({}_\gamma\cF) \simeq \left\{ \begin{array}{ll}
\oplus_{i=1}^n k &   j=0 \\
0 &  \text{otherwise} \\
\end{array} \right. && 
HC_j({}_\gamma \cF) \simeq \left\{ \begin{array}{ll}
\oplus_{i=1}^n k & j\geq 0 \,\, \text{even}  \\
0 &  \text{otherwise} \\
\end{array} \right.
\end{eqnarray*}
\begin{eqnarray*}
HP_j({}_\gamma \cF) \simeq \left\{ \begin{array}{ll}
\oplus_{i=1}^n k &  j \,\,\text{even} \\
0 & n \,\,\text{odd} \\
\end{array} \right. && 
HN_j({}_\gamma \cF) \simeq \left\{ \begin{array}{ll}
\oplus_{i=1}^n  k &  j\leq 0 \,\, \text{even}  \\
0 &  \text{otherwise} \\
\end{array} \right.\,.
\end{eqnarray*}

Intuitively speaking, Theorem \ref{thm:computation} shows that cyclic homology (and all its variants) only measures the index $n:=[W(\widetilde{G}):W(\widetilde{P})]$ of the Weyl groups. Under the following restrictions, the same holds for every additive invariant. 
\begin{theorem}\label{thm:coefficients}
The above isomorphism \eqref{eq:iso-main-2} reduces to $\bigoplus_{i=1}^n E(k) \simeq E({}_\gamma \cF)$ whenever $\mathsf{D}$ is $\bbZ[\frac{1}{r}]$-linear, with $r:=\prod_{i=1}^n \mathrm{degree}(A_{\chi(i),\gamma})$.
\end{theorem}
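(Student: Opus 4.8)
The plan is to reduce the statement, via the universal property of noncommutative motives, to a single fact about central simple algebras. Since $U\colon\dgcat\to\Hmo_0$ is the universal additive invariant (see \S\ref{sec:NCmotives}), the functor $E$ factors, up to isomorphism, as $E\simeq\overline{E}\circ U$ for a (unique) additive functor $\overline{E}\colon\Hmo_0\to\mathsf{D}$. As $\mathsf{D}$ is $\bbZ[\tfrac{1}{r}]$-linear, each induced homomorphism $\Hom_{\Hmo_0}(M,N)\to\Hom_{\mathsf{D}}(\overline{E}M,\overline{E}N)$ takes values in a $\bbZ[\tfrac{1}{r}]$-module and therefore factors through $\Hom_{\Hmo_0}(M,N)\otimes_{\bbZ}\bbZ[\tfrac{1}{r}]$; in other words, $\overline{E}$ factors through the category $\Hmo_0\otimes\bbZ[\tfrac{1}{r}]$ having the same objects as $\Hmo_0$ and its $\Hom$-groups tensored with $\bbZ[\tfrac{1}{r}]$. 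Hence, in view of the isomorphism \eqref{eq:iso-main-2}, it suffices to establish that
\begin{equation}\label{eq:plan-triv}
U(A_{\chi(i),\gamma})\otimes\bbZ[\tfrac{1}{r}]\;\simeq\;U(k)\otimes\bbZ[\tfrac{1}{r}]\qquad\text{in }\Hmo_0\otimes\bbZ[\tfrac{1}{r}]
\end{equation}
for every $1\le i\le n$: applying $\overline{E}$ to \eqref{eq:plan-triv} yields $E(A_{\chi(i),\gamma})\simeq E(k)$, and taking the direct sum over $i$ and invoking \eqref{eq:iso-main-2} gives $\bigoplus_{i=1}^n E(k)\simeq\bigoplus_{i=1}^n E(A_{\chi(i),\gamma})\simeq E({}_\gamma\cF)$.

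To prove \eqref{eq:plan-triv} I would fix $i$, write $A:=A_{\chi(i),\gamma}$, and set $d:=\mathrm{degree}(A)$, so that $A$ is central simple over $k$ with $\dim_k A=d^2$ and $d\mid r$ (whence $\bbZ[\tfrac{1}{d}]\subseteq\bbZ[\tfrac{1}{r}]$). Recalling from \S\ref{sec:NCmotives} that $\Hom_{\Hmo_0}(U(\cA),U(\cB))=K_0(\rep(\cA,\cB))$, with composition induced by the (derived) tensor product of bimodules, I would introduce the two morphisms
$$\alpha:=[\,{}_kA_A\,]\colon U(k)\too U(A)\ko\qquad\beta:=[\,{}_AA_k\,]\colon U(A)\too U(k)\,,$$
where ${}_kA_A$ is $A$ with its free rank-one right $A$-module structure and its central left $k$-action, and ${}_AA_k$ is $A$ with its left $A$-module structure and its central right $k$-action (the latter being perfect over $k$ because $\dim_k A<\infty$). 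The composite $\beta\circ\alpha\colon U(k)\to U(k)$ is represented by ${}_kA_A\otimes_A{}_AA_k\simeq{}_kA_k$, i.e.\ by the $d^2$-dimensional $k$-vector space $A$, and so equals $d^2\cdot\id_{U(k)}$. The composite $\alpha\circ\beta\colon U(A)\to U(A)$ is represented by ${}_AA_k\otimes_k{}_kA_A={}_A(A\otimes_k A)_A$, which is free of rank one over $A^e:=A\otimes_k A^{\op}$; using the fundamental isomorphism $A^e\cong M_{d^2}(k)$ for a central simple algebra, together with the resulting Morita identifications $K_0(\rep(A,A))\cong\bbZ\cong K_0(\rep(k,k))$ — under which $\id_{U(A)}$ corresponds to the class of the simple $A^e$-module whereas the regular module $A^e$ corresponds to $d^2$ times that class — one finds that this composite equals $d^2\cdot\id_{U(A)}$. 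Therefore, once $d$ is invertible, $\alpha$ and $\tfrac{1}{d^2}\beta$ are mutually inverse, which proves \eqref{eq:plan-triv} over $\bbZ[\tfrac{1}{d}]$ and a fortiori over $\bbZ[\tfrac{1}{r}]$.

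The step I expect to be the actual obstacle is the evaluation of the second composite above, i.e.\ correctly recognizing the class of the bimodule ${}_A(A\otimes_k A)_A$ as $d^2\cdot\id_{U(A)}$ (and not, say, $\id_{U(A)}$): this is where the arithmetic of central simple algebras genuinely enters, through $A\otimes_k A^{\op}\cong M_{d^2}(k)$ and the Morita invariance built into $\Hmo_0$. Equivalently, one may phrase it by noting that $U(A)$ is a $\otimes$-invertible object of $\Hmo_0$ with $U(A)\otimes U(A)^{\vee}\simeq U(A\otimes_k A^{\op})\simeq U(M_{d^2}(k))\simeq U(k)$, so that inverting $d$ forces this invertible object to become the $\otimes$-unit. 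The remaining ingredients — the factorization of $E$ through $U$, the passage to $\bbZ[\tfrac{1}{r}]$-coefficients, and the final summation — are routine.
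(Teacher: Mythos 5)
Your proposal is correct and follows the same route as the paper: factor $E$ through the universal additive invariant $U$ via \eqref{eq:categories}, reduce to showing that $E(A)\simeq E(k)$ for each central simple algebra $A=A_{\chi(i),\gamma}$ of degree $d\mid r$, and then plug this into \eqref{eq:iso-main-2}. The only difference is that the paper outsources the key input to \cite[Cor.~3.1]{TV}, whereas you prove it directly -- and correctly -- by exhibiting the bimodules ${}_kA_A$ and ${}_AA_k$ whose two composites equal $d^2\cdot\id$ (using $A\otimes_k A^{\op}\simeq M_{d^2}(k)$ and Morita invariance), which become invertible once $d$ is inverted.
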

The richest additive invariant is the universal one. In this case we have the following optimal result:
\begin{theorem}\label{thm:Brauer}
Under the assumptions of Theorem~\ref{thm:main}(i), we have an isomorphism $\bigoplus_{i=1}^n U(k) \simeq U({}_\gamma \cF)$ if and only if the Brauer classes $[A_{\chi(i),\gamma}]$ are trivial. 
\end{theorem}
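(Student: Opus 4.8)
The plan is to use Theorem~\ref{thm:main}(i) to replace $U({}_\gamma\cF)$ by $\bigoplus_{i=1}^n U(A_{\chi(i),\gamma})$ and then to read off the indices of the central simple $k$-algebras $A_{\chi(i),\gamma}$ --- equivalently, the (non)triviality of their Brauer classes --- from the morphism groups of $\Hmo_0$; since all the dg categories in sight are smooth and proper, we are in fact working inside Kontsevich's category $\NChow$ of noncommutative Chow motives. The ``if'' direction is immediate: if every $[A_{\chi(i),\gamma}]$ is trivial, then $A_{\chi(i),\gamma}\cong M_{d_i}(k)$ for some $d_i\geq 1$, so $U(A_{\chi(i),\gamma})\simeq U(k)$ by Morita invariance (Definition~\ref{def:additive}(i)), and taking the direct sum of these isomorphisms and combining with Theorem~\ref{thm:main}(i) yields $U({}_\gamma\cF)\simeq\bigoplus_{i=1}^n U(k)$.

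For the converse --- the substance of the statement --- I would argue as follows. Recall from the theory of noncommutative motives (\S\ref{sec:NCmotives}) that $\Hom_{\Hmo_0}(U(\cA),U(\cB))=K_0(\repdg(\cA,\cB))$, composition being induced by the derived tensor product of bimodules; in particular $\End_{\Hmo_0}(U(k))=K_0(k)=\bbZ$, and for a central simple $k$-algebra $A$ the group $\Hom(U(k),U(A))\cong\bbZ$ is generated by the class $[S]$ of the simple right $A$-module $S$, while $\Hom(U(A),U(k))\cong\bbZ$ is generated by the class $[S^\ast]$ of the simple left $A$-module $S^\ast$. Starting from an isomorphism $U({}_\gamma\cF)\isotoo\bigoplus_{i=1}^n U(k)$ together with the isomorphism of Theorem~\ref{thm:main}(i), I would consider the composition pairing
$$\Hom\Bigl(U({}_\gamma\cF),U(k)\Bigr)\times\Hom\Bigl(U(k),U({}_\gamma\cF)\Bigr)\too\End(U(k))=\bbZ\,,\qquad (g,f)\longmapsto g\circ f\,.$$
Since this pairing is natural in $U({}_\gamma\cF)$, the first isomorphism identifies it with the pairing attached to $\bigoplus_{i=1}^n U(k)$, whose Gram matrix in the bases of inclusions and projections is the identity; using instead the decomposition $\bigoplus_{i=1}^n U(A_{\chi(i),\gamma})$ and the bases $\{[S_i^\ast]\}_i$, $\{[S_i]\}_i$, the Gram matrix is diagonal --- the off-diagonal entries vanish because the corresponding morphisms factor through distinct summands --- with $i$-th diagonal entry $[S_i\otimes^{\mathbb L}_{A_{\chi(i),\gamma}}S_i^\ast]\in K_0(k)=\bbZ$. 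As the determinant of such a Gram matrix is an invariant of the pairing up to sign, the whole problem is thereby reduced to computing these diagonal entries.

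The key computation is the following. For $A:=A_{\chi(i),\gamma}$ of index $e_i$, the simple module $S_i$ is projective over the semisimple ring $A$, so $S_i\otimes^{\mathbb L}_{A}S_i^\ast=S_i\otimes_{A}S_i^\ast$; writing $A\cong S_i^{\oplus m}$ as a right $A$-module and $A\cong(S_i^\ast)^{\oplus m}$ as a left $A$-module, with $m=\deg(A)/e_i$, one obtains $A\cong A\otimes_A A\cong(S_i\otimes_A S_i^\ast)^{\oplus m^2}$ as $k$-vector spaces, hence $\dim_k(S_i\otimes_A S_i^\ast)=e_i^2$. Thus the $i$-th diagonal entry equals $e_i^2$, and the determinant comparison forces $\prod_{i=1}^n e_i^2=1$, i.e. $e_i=1$ for every $i$; equivalently every $A_{\chi(i),\gamma}$ is split, i.e. $[A_{\chi(i),\gamma}]=0$. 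Locating the index $e_i$ inside the purely categorical datum is the heart of the argument and the step I expect to require the most care; the rest is bookkeeping with $\Hom$-groups in $\Hmo_0$. (It also explains the sharpness of the statement: over a $\bbZ[\frac{1}{r}]$-linear target with $r=\prod_i\deg(A_{\chi(i),\gamma})$ the integers $e_i^2$ are invertible, the pairing is unimodular regardless, and the obstruction disappears --- consistently with Theorem~\ref{thm:coefficients}.)
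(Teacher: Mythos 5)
Your proof is correct, but it takes a genuinely different route from the paper's. The paper first proves an auxiliary result (Theorem~\ref{thm:Brauer-aux}): for central simple algebras, $U(A)\simeq U(B)$ if and only if $[A]=[B]\in\mathrm{Br}(k)$. The point there is that an isomorphism in $\NChow$ is a pair of classes in $K_0\cD_c(A^\op\otimes B)\simeq\bbZ$ and $K_0\cD_c(B^\op\otimes A)\simeq\bbZ$ composing to the identities, and since every such class is $\pm$ the class of an honest bimodule of projectives, an invertible correspondence is forced to be a genuine Morita equivalence. The paper then passes from single algebras to the direct sum (Proposition~\ref{prop:Brauer1}) by an idempotent argument in $\End_{\NChow}(\bigoplus_{i=1}^n U(k))\simeq M_{n\times n}(\bbZ)$, concluding $U(A_{\chi(i),\gamma})\simeq U(k)$ for each $i$ separately. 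You instead compare the two Gram matrices of the intrinsic composition pairing $\Hom(U({}_\gamma\cF),U(k))\times\Hom(U(k),U({}_\gamma\cF))\to\bbZ$; your key numerical input $\dim_k\bigl(S_i\otimes_{A_{\chi(i),\gamma}}S_i^\ast\bigr)=e_i^2$ is correct (writing $A\simeq S^{\oplus m}$ as a right module and $A\simeq(S^\ast)^{\oplus m}$ as a left module gives $\dim_k(S\otimes_A S^\ast)=\deg(A)^2/m^2=\mathrm{ind}(A)^2$), and comparing determinants of the two Gram matrices, which agree up to sign under base change in $GL_n(\bbZ)$, forces $\prod_i e_i^2=1$. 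Your route is more self-contained for the statement at hand and makes the integral obstruction $\prod_i e_i^2$ explicit --- which, as you observe, cleanly explains why Theorem~\ref{thm:coefficients} kills it after inverting the degrees. What the paper's route buys in exchange is the finer intermediate statement that $U$ is a complete invariant of the Brauer class on central simple algebras, and the conclusion that \emph{each} $A_{\chi(i),\gamma}$ individually satisfies $U(A_{\chi(i),\gamma})\simeq U(k)$; your determinant argument reaches the final equivalence without isolating either of these.
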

\begin{example}[Severi-Brauer varieties]
Let $G=PGL_n$. In this case, $\widetilde{G}=SL_n$, $\widetilde{Z}\simeq\mu_n$ and $\mathsf{Ch}\simeq\bbZ/n\bbZ$. Consider the following parabolic subgroup
$$ \widetilde{P} := \{\left( \begin{array}{cc}
a & b  \\
0 & c 
\end{array} \right) | \,a \cdot \mathrm{det}(c) =1 \}\subset SL_n\,,$$
where $a \in k^\times$ and $c \in GL_{n-1}$. The associated projective homogeneous variety is $\cF:=G/P \simeq \widetilde{G}/\widetilde{P} = \bbP^{n-1}$. Given a $1$-cocycle $\mathfrak{g} \to PGL_n(k_{\mathrm{sep}})$ and an additive invariant $E:\dgcat \to \mathsf{D}$, we conclude from Panin \cite[\S10.2]{Panin} and \eqref{eq:iso-main-2} that 
\begin{equation}\label{eq:iso-SV}
E(k) \oplus E(A_\gamma) \oplus \cdots \oplus E(A_\gamma^{\otimes n-1}) \stackrel{\sim}{\too} E({}_\gamma \bbP^{n-1})\,,
\end{equation}
where $A_\gamma$ is the central simple algebra of degree $n$ associated to $\gamma$. Thanks to Theorem~\ref{thm:coefficients}, \eqref{eq:iso-SV} reduces to $\bigoplus_{i=1}^n E(k)\simeq E({}_\gamma \bbP^{n-1})$ whenever $\mathsf{D}$ is $\bbZ[\frac{1}{n}]$-linear. 
\end{example}
\begin{example}[Twisted Grassmann varieties]
Let $G:=PGL_n$ as above. Fix a number $1 \leq m \leq n-1$ and consider the parabolic subgroup
$$ \widetilde{P} := \{\left( \begin{array}{cc}
a & b  \\
0 & c 
\end{array} \right) | \,\mathrm{det}(a) \cdot \mathrm{det}(c) =1 \} \subset SL_n\,,$$
where $a \in GL_m$ and $c\in GL_{n-m}$. The associated projective homogeneous variety is $\cF:=G/P \simeq \widetilde{G}/\widetilde{P}= \mathrm{Gr}(m,n)$. Given a $1$-cocycle $\gamma:\mathfrak{g} \to PGL_n(k_{\mathrm{sep}})$ and an additive invariant $E:\dgcat \to \mathsf{D}$, one concludes from \cite[\S10.2]{Panin} and \eqref{eq:iso-main-2} that 
\begin{equation}\label{eq:iso-Grass}
\bigoplus_\alpha E(A_\gamma^{\otimes d(\alpha)}) \stackrel{\sim}{\too} E({}_\gamma \mathrm{Gr}(m,n))\,,
\end{equation}
where $\alpha$ runs over all sequences $\alpha_1, \ldots, \alpha_m$ such that $n-m > \alpha_1 -1 > \cdots > \alpha_m - m > -m$, and $d(\alpha):= \alpha_1 + \cdots + \alpha_m$. Thanks to Theorem~\ref{thm:coefficients},  \eqref{eq:iso-Grass} reduces to $\bigoplus_{\alpha} E(k)\simeq E({}_\gamma \mathrm{Gr}(m,n))$ whenever $\mathsf{D}$ is $\bbZ[\frac{1}{n}]$-linear.
\end{example}
\begin{example}[Quadrics]
Let $G=SO_n$. In this case, $\widetilde{G}=\mathrm{Spin}_n$, $\widetilde{Z}\simeq\mu_2$, and $\mathsf{Ch}\simeq\{\pm\}$. Consider the action of $G$ on $\bbP^{n-1}$ by projective linear transformations and let $P \subset G$ be the stabilizer of the isotropic point $[{\bf 1}: 0: \cdots :0]$. The projective homogeneous variety $\cF:=G/P$ is the quadric hypersurface $Q$ in $\bbP^{n-1}$ given by
$
x_1y_1 + \cdots + x_{[n/2]}y_{[n/2]} + z^2 =0$
when $n$ is odd and by
$x_1y_1 + \cdots + x_{[n/2]}y_{[n/2]} =0$
when $n$ is even. Given a $1$-cocycle $\gamma: \mathfrak{g} \to SO_n(k_{\mathrm{sep}})$ and an additive invariant $E:\dgcat \to \mathsf{D}$, we conclude from Panin \cite[\S10.3]{Panin} and \eqref{eq:iso-main-2} that 
\begin{eqnarray}
E(k)^{\oplus (n-2)} \oplus E(C_0(q)) \stackrel{\sim}{\too} E({}_\gamma Q) && n \,\, \mathrm{odd} \label{eq:quadric1} \\
E(k)^{\oplus (n-2)} \oplus E(C^+_0(q)) \oplus E(C^-_0(q)) \stackrel{\sim}{\too} E({}_\gamma Q) && n \,\, \mathrm{even} \label{eq:quadric2} \,,
\end{eqnarray}
where $q$ is the quadric form associated to $\gamma$ and $C_0^+(q), C_0^-(q)$ the simple components of the even Clifford algebra $C_0(q)$. Since the degree of the Clifford algebras is a power of $2$, we conclude from Theorem \ref{thm:coefficients} that the left-hand-side of \eqref{eq:quadric1} (resp. of \eqref{eq:quadric2}) reduces to $\bigoplus_{i=1}^{n-1} E(k)$ (resp. to $\bigoplus_{i=1}^{n} E(k)$) whenever $\mathsf{D}$ is $\bbZ[\frac{1}{2}]$-linear.
\end{example}
\begin{example}[Forms of quadrics]
Let $G=PSO_n$ with $n$ even. Given a $1$-cocycle $\gamma: \textgoth{g} \to PSO_n(k_{\mathrm{sep}})$ and an additive invariant $E:\dgcat \to \mathsf{D}$, we conclude from \cite[\S10.3]{Panin} and \eqref{eq:iso-main-2} that
\begin{equation}\label{eq:form1}
(\bigoplus_{\underset{\mathrm{even}}{i>0}}^{n-3} E(k)) \oplus (\bigoplus_{\underset{\mathrm{odd}}{i>0}}^{n-3} E(A)) \oplus E(C_0^+(A,\sigma)) \oplus E(C_0^-(A,\sigma)) \stackrel{\sim}{\too} E({}_\gamma Q)\,,
\end{equation}
where $(A,\sigma)$ is the algebra with involution associated to $\gamma$. Since $A$ is of degree $n$, we conclude from Theorem~\ref{thm:coefficients} that the left-hand-side of \eqref{eq:form1} reduces to $\bigoplus_{i=1}^n E(k)$ whenever $\mathsf{D}$ is $\bbZ[\frac{1}{2}]$-linear.
\end{example}
\subsection*{Quasi-split case}
As explained in Remark~\ref{rk:quasi-split}, when $G$ is a quasi-split algebraic group the algebras $A_{\chi(i),\gamma}$ are only separable. In this generality, we also have the following complete computation (which generalizes Theorem~\ref{thm:computation}).
\begin{theorem}\label{thm:computation-1}
Under the assumptions of Theorem~\ref{thm:main}(i) (with $G$ quasi-split), we have the following isomorphisms:
$$
\begin{array}{ll}
\bigoplus_i C(k) \otimes  HH_0(A_{\chi(i),\gamma}) \simeq C({}_\gamma \cF) & \\
\bigoplus _i HH(k) \otimes HH_0(A_{\chi(i),\gamma}) \simeq HH({}_\gamma \cF) & \bigoplus _i HC(k) \otimes HH_0(A_{\chi(i),\gamma}) \simeq HC({}_\gamma \cF) \\
\bigoplus _i HP(k) \otimes HH_0(A_{\chi(i),\gamma}) \simeq HP({}_\gamma \cF) & \bigoplus _i HN(k) \otimes HH_0(A_{\chi(i),\gamma}) \simeq HN({}_\gamma \cF)
\end{array}
$$
where $HH_0(A_{\chi(i),\gamma})$ is the $k$-vector space $A_{\chi(i),\gamma}/[A_{\chi(i),\gamma},A_{\chi(i),\gamma}]$.
\end{theorem}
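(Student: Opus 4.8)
The plan is to reduce the statement, through the additive-invariant formalism, to one computation: the mixed complex of a separable algebra. Applying the mixed complex functor $C\colon\dgcat\to\cD(\Lambda)$ to the quasi-split version of Theorem~\ref{thm:main}(i) (Remark~\ref{rk:quasi-split}, based on \cite[Thm.~12.4]{Panin}) gives an isomorphism $\bigoplus_i C(A_{\chi(i),\gamma})\simeq C({}_\gamma\cF)$ in $\cD(\Lambda)$; so it suffices to prove that for every separable $k$-algebra $A$ one has
\begin{equation}\label{eq:sep-mixed}
C(A)\;\simeq\;C(k)\otimes_k HH_0(A)\quad\text{in }\cD(\Lambda),\qquad HH_0(A)=A/[A,A].
\end{equation}
Granting \eqref{eq:sep-mixed}, summing over $i$ yields the first displayed isomorphism, and the remaining four follow by post-composing with the realization functors $HH,HC,HN\colon\cD(\Lambda)\to\cD(k)$ and $HP\colon\cD(\Lambda)\to\cD_{\bbZ/2}(k)$ through which (by Keller) the corresponding additive invariants factor: being $k$-linear and exact, these commute with finite direct sums and with $(-)\otimes_k V$ for a finite-dimensional $k$-vector space $V$, hence carry \eqref{eq:sep-mixed} to $HH(A)\simeq HH(k)\otimes_k HH_0(A)$, and likewise for $HC,HP,HN$.

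To prove \eqref{eq:sep-mixed}: separability of $A$ means exactly that $A$ is projective over $A^{e}:=A\otimes_k A^{\op}$, and since $HH_\ast(A)=\mathrm{Tor}^{A^{e}}_\ast(A,A)$ this forces $HH_n(A)=0$ for $n>0$; thus the complex underlying the mixed complex $C(A)$ has homology concentrated in degree $0$, where it equals $A/[A,A]$. I would then use the elementary fact that a mixed complex $M$ whose underlying complex has homology concentrated in degree $0$ is isomorphic in $\cD(\Lambda)$ to $H_0(M)$ placed in degree $0$ with vanishing Connes operator: writing $\Lambda=k[\epsilon]/(\epsilon^2)$ with $\epsilon$ of degree one, the smart truncation $\tau_{\geq 0}M$ is a sub-mixed-complex of $M$ and its further quotient onto $H_0(M)$ is a quotient mixed complex (both because $\epsilon$ raises the degree), and $\tau_{\geq 0}M\hookrightarrow M$, $\tau_{\geq 0}M\twoheadrightarrow H_0(M)$ are quasi-isomorphisms. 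Applied to $M=C(A)$, together with $C(k)\simeq k$ concentrated in degree $0$ with zero Connes operator, this gives \eqref{eq:sep-mixed}. (Alternatively one could decompose $A$ by Wedderburn, use Morita invariance of $C$ and the agreement of the cyclic homology of a central simple algebra with that of its center, but the argument above avoids that decomposition and matches the basis-free shape $HH_0(A_{\chi(i),\gamma})=A_{\chi(i),\gamma}/[A_{\chi(i),\gamma},A_{\chi(i),\gamma}]$ of the statement.)

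Since the representation-theoretic and geometric content is already carried by Theorem~\ref{thm:main}(i), the step needing the most care here is modest but not vacuous: one must confirm that the separable algebras $A_{\chi(i),\gamma}$ produced by Panin's quasi-split computation are separable as $k$-algebras, and not merely separable over some field extension of $k$, so that the vanishing $HH_{n}(A_{\chi(i),\gamma})=0$ for $n>0$ --- and hence \eqref{eq:sep-mixed} --- applies verbatim; this is where I would concentrate the verification. As a consistency check: each $A_{\chi(i),\gamma}$ is finite-dimensional over $k$, so each $HH_0(A_{\chi(i),\gamma})$ is finite-dimensional and every summand on the left-hand sides is a finite sum of copies of $C(k),HH(k),HC(k),HP(k),HN(k)$ respectively; and when $G$ is split the $A_{\chi(i),\gamma}$ are central simple, $HH_0(A_{\chi(i),\gamma})=k$, and Theorem~\ref{thm:computation} is recovered.
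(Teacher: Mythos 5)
Your proposal is correct, and its outer reduction coincides with the paper's: apply the invariants to the quasi-split form of Panin's isomorphism (Remark~\ref{rk:quasi-split}) and reduce everything to a computation for a single separable $k$-algebra, which in the paper is Theorem~\ref{thm:degree}. Where you genuinely diverge is in how that computation is carried out, and in how much of it you prove directly. The paper establishes all five isomorphisms of Theorem~\ref{thm:degree} at the chain level: it writes down an explicit morphism of complexes $HH(A)\to HH(k)\otimes A/[A,A]$, invokes Loday's results on separable algebras to see it is a quasi-isomorphism, checks that it intertwines the cyclic operators, and then pushes it separately through the mixed, cyclic, periodic and negative (bi)complex constructions. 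You instead prove only the mixed-complex statement $C(A)\simeq C(k)\otimes_k HH_0(A)$ and do so structurally: separability makes $A$ projective over $A\otimes A^{\op}$, so $\mathrm{Tor}$-vanishing kills $HH_{>0}(A)$, and a mixed complex with homology concentrated in degree $0$ is formal because the relevant truncations are sub- and quotient-mixed-complexes ($\epsilon$ raises degree, and the induced Connes operator on $H_0$ vanishes for degree reasons). The $HH$, $HC$, $HP$, $HN$ statements then follow from exactness of the realization functors out of $\cD(\Lambda)$ and their compatibility with finite direct sums and with $-\otimes_k V$ for finite-dimensional $V$ --- the one point your route must supply that the paper's hands-on route does not, and which you correctly flag ($V=HH_0(A_{\chi(i),\gamma})$ is indeed finite-dimensional, since separable $k$-algebras are finite-dimensional). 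Your closing worry about separability over $k$ is settled by the setup itself: Remark~\ref{rk:quasi-split}, quoting Panin, asserts precisely that in the quasi-split case the Tits algebras are separable $k$-algebras. Net effect: your argument is shorter and avoids the explicit chain-level formula, at the price of a small amount of homological bookkeeping about $\cD(\Lambda)$; both are complete proofs.
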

Intuitively speaking, Theorem~\ref{thm:computation-1} shows us that in this generality cyclic homology (and all its variants) not only measures the index of the Weyl groups but also the ``noncommutativity'' of the Tits' algebras.
\subsection*{Toric varieties}
By applying Theorem~\ref{thm:main}(ii) to the above examples (i)-(viii) of additive invariants we obtain several (concrete) direct summands. In the case of cyclic homology (and all its variants) we have the following computation.
\begin{theorem}\label{thm:computation-direct}
Under the assumptions of Theorem~\ref{thm:main}(ii), we have:
$$
\begin{array}{lcl}
C(X) \,\text{d.s.}\, C(k) \otimes HH_0(A) && \\
HH(X)  \,\text{d.s.}\, HH(k) \otimes HH_0(A) && HC(X)  \,\text{d.s.}\, HC(k) \otimes HH_0(A) \\
HP(X)  \,\text{d.s.}\, HP(k)\otimes HH_0(A) && HN(X)  \,\text{d.s.}\, HN(k) \otimes HH_0(A) \,,
\end{array}
$$
where $HH_0(A)=\bigoplus_\rho (A_\rho/[A_\rho, A_\rho])$ and $\text{d.s.}$ stands for ``direct summand''. 
\end{theorem}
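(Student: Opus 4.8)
The plan is to reduce the statement to Theorem~\ref{thm:main}(ii) together with the classical computation of the cyclic homology of a separable algebra. First I would apply Theorem~\ref{thm:main}(ii) to the mixed complex functor $C \colon \dgcat \to \cD(\Lambda)$, which is an additive invariant (item (i) of \S\ref{sec:introduction}); this gives that $C(X)$ is a direct summand of $C(A)$ in $\cD(\Lambda)$. The remaining invariants $HH$, $HC$, $HP$, $HN$ are, by construction, exact (in particular additive) functors of the mixed complex $C(-)$, and additive functors preserve direct summands; hence it suffices to identify $C(A)$ with $C(k)\otimes_k HH_0(A)$.

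Second I would reduce to a single irreducible factor. Since $A = \prod_\rho A_\rho$ is a finite product of $k$-algebras, the additivity of $C$ (equivalently, the fact that $HH$ and its variants send finite products of algebras to direct sums) yields $C(A) \simeq \bigoplus_\rho C(A_\rho)$. So it is enough to show that, for each separable $k$-algebra $B := A_\rho$, one has $C(B) \simeq C(k)\otimes_k HH_0(B)$ in $\cD(\Lambda)$, where $HH_0(B) = B/[B,B]$ is regarded as a finite-dimensional $k$-vector space --- so that $C(k)\otimes_k HH_0(B)$ is the mixed complex $HH_0(B)$ placed in degree $0$ with trivial differential and trivial Connes operator.

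Third --- and this is the only genuine input beyond Theorem~\ref{thm:main}(ii) --- I would establish the formality of $C(B)$. Separability of $B$ means $B$ is projective as a $B$-bimodule, so $HH_n(B) = \mathrm{Tor}^{B\otimes_k B^{\op}}_n(B,B)$ vanishes for $n>0$ and equals $B/[B,B]$ for $n=0$; thus the underlying complex of $C(B)$ is quasi-isomorphic to $HH_0(B)$ concentrated in degree~$0$. Connes' operator changes homological degree by one, so it acts by zero on $HH_\ast(B)$, and a mixed complex whose homology is concentrated in a single degree is formal, \ie quasi-isomorphic in $\cD(\Lambda)$ to that homology equipped with trivial operators --- which is precisely $C(k)\otimes_k HH_0(B)$. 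This is the same computation of the cyclic homology of a separable algebra that underlies Theorem~\ref{thm:computation-1}.

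Assembling the steps, $C(X)$ is a direct summand of $C(A) \simeq \bigoplus_\rho \bigl(C(k)\otimes_k HH_0(A_\rho)\bigr) \simeq C(k)\otimes_k HH_0(A)$, with $HH_0(A) = \bigoplus_\rho \bigl(A_\rho/[A_\rho,A_\rho]\bigr)$, and applying the exact functors of the first step delivers the corresponding direct-summand statements for $HH$, $HC$, $HP$, $HN$. The point I expect to require the most care is the formality assertion of the third step --- upgrading ``$HH_\ast(B)$ is concentrated in degree~$0$'' to ``the whole mixed complex $C(B)$ is formal'' --- although this is standard and is the very fact already used in the homogeneous (quasi-split) case.
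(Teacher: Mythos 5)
Your reduction is exactly the paper's: Theorem~\ref{thm:main}(ii) applied to the mixed complex functor (the paper applies it separately to each of $C,HH,HC,HP,HN$, but deriving the latter four as triangulated functors of $C(-)$ is equivalent), combined with the identification $C(A)\simeq C(k)\otimes HH_0(A)$ for a separable algebra, which is precisely Theorem~\ref{thm:degree}; note also that the splitting $C(A)\simeq\bigoplus_\rho C(A_\rho)$ is not really needed, since Theorem~\ref{thm:degree} applies to the product algebra directly and $A/[A,A]=\bigoplus_\rho A_\rho/[A_\rho,A_\rho]$. Where you genuinely diverge is in the proof of that identification. The paper writes down an explicit cyclic-equivariant chain map $HH(A)\to HH(k)\otimes A/[A,A]$, checks via Loday's \cite[Thm.~1.2.13 and Cor.~1.2.14]{Loday} that it is a quasi-isomorphism, and then pushes it through the $\mathrm{Mix}$, $B$, $B^{\mathrm{per}}$ and $B^{-}$ constructions. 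You instead argue by formality: separability gives $HH_n(A)=\mathrm{Tor}_n^{A\otimes A^{\op}}(A,A)=0$ for $n>0$, and a non-negatively graded mixed complex with homology concentrated in degree~$0$ admits a strict quasi-isomorphism onto its $H_0$ with trivial operators (the projection in degree~$0$ commutes with $b$ because it kills boundaries, and with Connes' $B$ for degree reasons). This is correct and arguably cleaner --- it isolates the one point that needs care, namely upgrading vanishing of higher $HH$ to formality of the whole mixed complex, which is automatic here precisely because the homology sits in a single (bottom) degree. The paper's explicit chain map buys a concrete model of the quasi-isomorphism; your argument buys economy and makes transparent why separability is the operative hypothesis. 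Both are valid proofs of the same statement.
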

\begin{remark}
When $S$ is a {\em quasi-split} reductive group (\eg\ an algebraic torus), the algebras $A_\rho$ are commutative; see \cite[Remark~4.3]{MP}.
\end{remark}
\begin{example}[Toric models]\label{example:toric}
Let $T$ be an algebraic torus and $X$ a smooth projective toric $T$-model. As proved by Merkurjev-Panin in \cite[Prop.~5.6]{MP}, one can construct out of this data an algebraic torus $S$, an $S$-torsor $\pi:U \to X$, and an $S$-equivariant open embedding of $U$ into an affine space on which $S$ acts linearly. As a consequence, Theorem~\ref{thm:main}(ii) and the above Theorem~\ref{thm:computation-direct} hold in these cases. In what concerns toric varieties, please consult Remark~\ref{rk:toric}.
\end{example}
To the best of the author's knowledge, all the computations obtained in \S\ref{sec:results}\text{-}\ref{sec:applications} are new in the literature.
\medbreak\noindent\textbf{Notations:}
We will reserve the letter $k$ for the base field, the letters $X,Y,Z$ for smooth projective $k$-schemes, and the letters $A,B,C$ for separable $k$-algebras. Given a small category $\cC$, we will write $\mathrm{Iso}\, \cC$ for the set of isomorphism classes of objects. Finally, (unless stated differently) all tensor products will be taken over $k$. 
\medbreak\noindent\textbf{Acknowledgments:} The author is very grateful to Asher Auel for discussions about twisted projective homogeneous varieties, to Christian Haesemeyer for discussions about toric varieties, to Bjorn Poonen for discussions about the Brauer group, and to Alexander Merkurjev for pointing out his beautiful work with Panin \cite{MP}. The author would like also to thank Mikhail Kapranov and Yale's Department of Mathematics for their hospitality.

\section{Differential graded categories}\label{sec:dg}
Let $\cC(k)$ be the category of complexes of $k$-vector spaces. A {\em differential graded (=dg) category $\cA$} is a category enriched over $\cC(k)$. A {\em dg functor} $F:\cA\to \cB$ is  a functor enriched over $\cC(k)$; consult Keller's ICM survey \cite{ICM-Keller} for details. In what follows, we will write $\dgcat$ for the category of (small) dg categories and dg functors.

Let $\cA$ be a dg category. The category $\dgHo(\cA)$ has the same objects as $\cA$ and $\dgHo(\cA)(x,y):=H^0\cA(x,y)$. The {\em opposite} dg category $\cA^\op$ has the same objects as $\cA$ and $\cA^\op(x,y):=\cA(y,x)$. A {\em right $\cA$-module} is a dg functor $\cA^\op \to \cC_\dg(k)$ with values in the dg category $\cC_\dg(k)$ of complexes of $k$-vector spaces. Let us write $\cC(\cA)$ for the category of right $\cA$-modules. As explained in \cite[\S3.1]{ICM-Keller}, the dg structure of $\cC_\dg(k)$ makes $\cC(\cA)$ into a dg category $\cC_\dg(\cA)$. Recall from \cite[\S3.2]{ICM-Keller} that the {\em derived category $\cD(\cA)$ of $\cA$} is the localization of $\cC(\cA)$ with respect to quasi-isomorphisms. Its subcategory of compact objects will be denoted by $\cD_c(\cA)$.

A dg functor $F:\cA\to \cB$ is called a {\em derived Morita equivalence} if the restriction of scalars functor $\cD(\cB) \stackrel{\sim}{\to} \cD(\cA)$ is an equivalence. As proved in \cite[Thm.~5.3]{IMRN}, $\dgcat$ admits a Quillen model structure whose weak equivalences are the derived Morita equivalences. Let $\Hmo$ be the associated homotopy category.

The {\em tensor product $\cA\otimes\cB$} of two dg categories $\cA$ and $\cB$ is defined as follows: the set of objects is the cartesian product of the sets of objects of $\cA$ and $\cB$ and $(\cA\otimes\cB)((x,w),(y,z)):= \cA(x,y) \otimes \cB(w,z)$. As explained in \cite[\S2.3]{ICM-Keller}, this construction gives rise to a symmetric monoidal category $(\dgcat, -\otimes-, k)$.

Given dg categories $\cA$ and $\cB$, an {\em $\cA\text{-}\cB$-bimodule $\mathsf{B}$} is a dg functor $\mathsf{B}:\cA \otimes \cB^\op\to \cC_\dg(k)$, \ie a right $(\cA^\op \otimes \cB)$-module. Standard examples are the $\cA\text{-}\cA$-bimodule
\begin{eqnarray}\label{eq:bimodule-Id}
\cA \otimes \cA^\op \too \cC_\dg(k) && (x,y) \mapsto \cA(y,x)
\end{eqnarray}
and more generally the $\cA\text{-}\cB$-bimodule
\begin{eqnarray}\label{eq:bimodules111}
{}_F\mathsf{B}:\cA\otimes \cB^\op \too \cC_\dg(k) && (x,w) \mapsto \cB(w,F(x))
\end{eqnarray}
associated to a dg functor $F:\cA \to \cB$.
\begin{notation}
Given dg categories $\cA$ and $\cB$, let $\rep(\cA,\cB)$ be the full triangulated subcategory of $\cD(\cA^\op \otimes \cB)$ consisting of those $\cA\text{-}\cB$-bimodules $\mathsf{B}$ such that $\mathsf{B}(x,-) \in \cD_c(\cB)$ for every object $x \in \cA$. In the same vein, let $\rep_\dg(\cA,\cB)$ be the full dg subcategory of $\cC_\dg(\cA^\op \otimes \cB)$ consisting of those $\cA\text{-}\cB$-bimodules which belong to $\rep(\cA,\cB)$. By construction, we have $\dgHo(\rep_\dg(\cA,\cB))\simeq \rep(\cA,\cB)$.
\end{notation}
\subsection*{Kontsevich's smooth and proper dg categories}
Recall from Kontsevich \cite{IAS,Miami,finMot} that a dg category $\cA$ is called {\em smooth} if the above $\cA\text{-}\cA$-bimodule \eqref{eq:bimodule-Id} belongs to $\cD_c(\cA^\op \otimes \cA)$ and {\em proper} if for each ordered pair of objects $(x,y)$ we have $\sum_i \mathrm{dim}\, H^i \cA(x,y) < \infty$. The standard examples are the finite dimensional $k$-algebras of finite global dimension (when $k$ is perfect) and the dg categories $\perf_\dg(X)$ associated to smooth projective $k$-schemes $X$. As proved in \cite[Thm.~5.8]{CT1}, the smooth and proper dg categories can be characterized as being precisely the rigid (or dualizable) objects of the symmetric monoidal category $(\Hmo, -\otimes - , k)$. Moreover, the dual of $\cA$ is its opposite dg category $\cA^\op$. This gives rise (\cite[Lemma~5.9]{CT1}) to the following equivalence and derived Morita equivalence
\begin{eqnarray}\label{eq:reps}
\rep(\cA,\cB) \simeq \cD_c(\cA^\op \otimes \cB) & & \rep_\dg(\cA,\cB) \simeq \cA^\op \otimes \cB\,.
\end{eqnarray}
\section{Noncommutative (Chow) motives}\label{sec:NCmotives}
Let $\cA$ and $\cB$ be two dg categories. As proved in
\cite[Cor.~5.10]{IMRN}, we have a natural bijection $\Hom_{\Hmo}(\cA,\cB)\simeq \mathrm{Iso}\,\rep(\cA,\cB)$ under which the composition law of $\Hmo$ corresponds to the derived tensor product of bimodules 
\begin{eqnarray}\label{eq:bimodules11}
\rep(\cA,\cB) \times \rep(\cB,\cC) \too\rep(\cA,\cC) && (\mathsf{B},\mathsf{B}')\mapsto \mathsf{B} \otimes^\bfL_\cB \mathsf{B}'\,.
\end{eqnarray}
Moreover, the identity of an object $\cA$ corresponds to the isomorphism class of the $\cA\text{-}\cA$-bimodule \eqref{eq:bimodule-Id}. Since the above $\cA\text{-}\cB$-bimodules \eqref{eq:bimodules111} clearly belong to
$\rep(\cA,\cB)$, we have a well-defined $\otimes$-functor
\begin{eqnarray}\label{eq:functor1}
\dgcat \too \Hmo && F \mapsto {}_F\mathsf{B}\,.
\end{eqnarray}
The {\em additivization} of $\Hmo$ is the additive symmetric monoidal category $\Hmo_0$ with the same objects as $\Hmo$ and with abelian groups of morphisms given by $\Hom_{\Hmo_0}(\cA,\cB):=K_0\rep(\cA,\cB)$. The composition law is induced from the above bi-triangulated functor \eqref{eq:bimodules11} and the symmetric monoidal structure by bilinearity from $\Hmo$. We have also a well-defined $\otimes$-functor
\begin{eqnarray}\label{eq:nat2}
\Hmo \too \Hmo_0 && \mathsf{B} \mapsto [\mathsf{B}]\,.
\end{eqnarray}
As proved in \cite[Thms.~5.3 and 6.3]{IMRN}, the composition
$$ U:\dgcat \stackrel{\eqref{eq:functor1}}{\too} \Hmo \stackrel{\eqref{eq:nat2}}{\too} \Hmo_0$$
is the {\em universal additive invariant}, i.e. given any additive category $\mathsf{D}$ there is an induced equivalence of categories
\begin{equation}\label{eq:categories}
U^\ast: \Fun_{\add}(\Hmo_0,\mathsf{D}) \stackrel{\sim}{\too} \Fun_{\mathsf{A}}(\dgcat,\mathsf{D})\,,
\end{equation}
where the left-hand-side denotes the category of additive functors and the right-hand-side the category of additive invariants in the sense of Definition \ref{def:additive}. Because of this universal property, which is reminiscent from motives, $\Hmo_0$ is called the (additive\footnote{A triangulated version also exists in the literature; see \cite{Duke}.}) category of {\em noncommutative motives}; consult the survey article \cite{survey}. Since the functors \eqref{eq:functor1}\text{-}\eqref{eq:nat2} are the identity on objects, we will often make no notational distinction between a dg category and its image in $\Hmo_0$.
\subsection*{Kontsevich's noncommutative Chow motives}
Kontsevich introduced in \cite{IAS,Miami,finMot} the category $\NChow$ of {\em noncommutative Chow motives}. This category identifies with smallest full additive subcategory of $\NChow$ generated by the smooth and proper dg categories. Note that $\NChow$ is a rigid symmetric monoidal category. Moreover, thanks to the left-hand-side of \eqref{eq:reps}, we have the isomorphisms
\begin{equation}\label{eq:Hom-NChow}
 \Hom_{\NChow}(\cA,\cB) \simeq K_0\cD_c(\cA^\op \otimes \cB)\,.
\end{equation} 
\section{From Merkurjev-Panin to Kontsevich}\label{sec:MP-K}
In this section we construct a fully-faithful $\otimes$-functor $\Theta$ from Merkurjev-Panin's motivic category $\underline{\cC}$ to Kontsevich's category of noncommutative Chow motives $\NChow$; see Theorem~\ref{thm:bridge}. This functor will play a key role in the sequel.
\begin{notation}
In what follows we will write $\SmProj$ for the category of smooth projective $k$-schemes and $\Sep$ for the category of separable $k$-algebras. Given $X,Y,Z \in \SmProj$, the projection map from $X \times Y \times Z$ to $X \times Y, X\times Z, Y \times Z$ will be denoted by $p^{XYZ}_{XY}, p^{XYZ}_{XZ}, p^{XYZ}_{YZ}$, respectively.
\end{notation}
\subsection*{Merkurjev-Panin's motivic category $\underline{\cC}$}
Recall from \cite[\S1]{MP} the construction of the category $\underline{\cC}$. The objects are the pairs $(X,A)$ with $X \in \SmProj$ and $A \in \Sep$. The morphisms are given by the Grothendieck groups
$$\Hom_{\underline{\cC}}((X,A),(Y,B)) := K_0 \mathrm{vect}(X\times Y,A^\op \otimes B) \,,$$
where $\mathrm{vect}(X\times Y,A^\op \otimes B)$ is the exact category of those right $(\cO_{X \times Y} \otimes (A^\op \otimes B))$-modules which are locally free and of finite rank as $\cO_{X\times Y}$-modules. Given $[\cF] \in K_0 \mathrm{vect}(X\times Y,A^\op \otimes B)$ and $[\cF'] \in K_0 \mathrm{vect}(Y\times Z,B^\op \otimes C)$, their composition $[\cF']\circ [\cF]$ is defined as 
$$ (p^{XYZ}_{XZ})_\ast \left((p^{XYZ}_{XY})_\ast(\cF) \otimes_B (p^{XYZ}_{YZ})_\ast(\cF')\right) \in K_0 \mathrm{vect}(X\times Z,A^\op \otimes C)\,,$$
where the direct image $(p^{XYZ}_{XZ})_\ast$ is defined only at the $K_0$-theoretical level; consult \cite[\S1.3]{MP} for details. The identity of an object $(X,A)$ is the class $[\cO_\Delta \otimes A] \in K_0 \mathrm{vect}(X\times X,A^\op \otimes A)$, where $\Delta$ is the diagonal of $X \times X$. The category $\underline{\cC}$ comes equipped with a symmetric monoidal structure
\begin{eqnarray*}
\underline{\cC} \times \underline{\cC} \too \underline{\cC} & ((X,A), (Y,B)) \mapsto (X \times Y, A \otimes B)
\end{eqnarray*}
and with two $\otimes$-functors
\begin{eqnarray*}
\Phi:\SmProj^\op \too \underline{\cC} && \Psi: \Sep \too \underline{\cC}\,.
\end{eqnarray*}
The (contravariant) functor $\Phi$ sends $X$ to the pair $(X,k)$ and a map $f:X \to Y$ to $[\cO_{\Gamma^t_f}] \in K_0\mathrm{vect}(Y \times X)$, where $\Gamma^t_f$ stands for the transpose of the graph $\Gamma_f:=\{(x,f(x))\,|\,x \in X\} \subset X \times Y$ of $f$. On the other hand, the functor $\Psi$ sends $A$ to $(\mathrm{Spec}(k),A)$ and a $k$-algebra homomorphism $h:A \to B$ to $[{}_h \mathsf{B}] \in K_0(A^\op \otimes B)$.
\subsection*{Perfect complexes}
Given $X \in \SmProj$ and $A \in \Sep$, let $\Mod(\cO_X\otimes A)$ be the Grothendieck category of right $(\cO_X \otimes A)$-modules, $\cD(\cO\otimes A):= \cD(\Mod(\cO_X \otimes A))$ its derived category, and $\perf(X,A)$ the full subcategory of $\cD(\cO_X\otimes A)$ consisting of those complexes of right $(\cO_X \otimes A)$-modules which are perfect as complexes of $\cO_X$-modules. Note that $\perf(X,A) = \rep(A^\op, \perf_\dg(X))$.
\begin{lemma}\label{lem:K0s}
The canonical inclusion of categories $\mathrm{vect}(X,A) \subset \perf(X,A)$ gives rise to an isomorphism $K_0 \mathrm{vect}(X,A) \simeq K_0\perf(X,A)$ of abelian groups.
\end{lemma}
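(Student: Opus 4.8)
The plan is to reduce the statement to a standard dévissage/resolution argument for $K_0$. Write $R := \cO_X \otimes A$, a sheaf of (noncommutative) $\cO_X$-algebras which is coherent and of finite homological dimension over $\cO_X$ since $X$ is smooth and $A$ is finite-dimensional over $k$. The category $\mathrm{vect}(X,A)$ consists of the right $R$-modules that are locally free of finite rank over $\cO_X$; the category $\perf(X,A)$ consists of the bounded complexes of right $R$-modules whose underlying complexes of $\cO_X$-modules are perfect, i.e.\ locally quasi-isomorphic to bounded complexes of vector bundles. The key point is that $X$ being regular (smooth over a field), every coherent right $R$-module admits a finite resolution by objects of $\mathrm{vect}(X,A)$: locally one resolves by finitely generated free $R$-modules, and because $\mathrm{gldim}(\cO_{X,x}) < \infty$ the kernel after finitely many steps is automatically $\cO_X$-locally free, hence lies in $\mathrm{vect}(X,A)$. (One has to be mildly careful to make this work globally on $X$ rather than just locally; the standard trick is to use that $R$-modules which are coherent over $\cO_X$ have enough "globally generated up to a twist" objects, or to invoke the existence of a finite affine cover together with the fact that finite homological dimension is detected locally.)

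With this in hand I would apply the Resolution Theorem of Quillen (see also Thomason–Trobaugh \cite{TT}). First one identifies $K_0\mathrm{vect}(X,A)$ with $K_0$ of the bounded derived category $\cD^b_{\mathrm{coh}}(R)$ of coherent right $R$-modules: the inclusion of the exact category $\mathrm{vect}(X,A)$ into $\cD^b_{\mathrm{coh}}(R)$ induces an isomorphism on $K_0$ by Resolution, precisely because every coherent $R$-module has a finite $\mathrm{vect}(X,A)$-resolution and $\mathrm{vect}(X,A)$ is closed under the relevant kernels. Second, one observes that on a smooth (hence regular) scheme a complex of $R$-modules is perfect over $\cO_X$ if and only if it is quasi-isomorphic to a bounded complex of coherent $R$-modules — both conditions amount to having bounded, coherent cohomology — so that $\perf(X,A) \simeq \cD^b_{\mathrm{coh}}(R)$ as triangulated categories, giving $K_0\perf(X,A) \simeq K_0\cD^b_{\mathrm{coh}}(R)$. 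Composing the two identifications yields the desired isomorphism $K_0\mathrm{vect}(X,A) \isoto K_0\perf(X,A)$, and by construction it is induced by the stated inclusion $\mathrm{vect}(X,A) \subset \perf(X,A)$ (sending a bundle to itself viewed as a complex concentrated in degree $0$).

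The main obstacle I anticipate is the global resolution statement: showing that \emph{every} object of $\perf(X,A)$ — or equivalently every coherent right $R$-module — admits a finite \emph{global} resolution by objects of $\mathrm{vect}(X,A)$, not merely a local one. Local finiteness of homological dimension is immediate from smoothness of $X$, but patching local resolutions into a global bounded one requires either a Jouanolou-type device, or the observation that since $X$ is projective (as it is smooth projective here) one can resolve coherent $R$-modules by sums of modules of the form $\cO_X(-m) \otimes_k P$ with $P$ a projective right $A$-module, truncate after $\dim X + \mathrm{pd}_A(P)$ steps, and check the final syzygy is $\cO_X$-locally free. I would isolate this as the one technical lemma and handle everything else by citing Quillen's Resolution and the elementary identification of perfectness with bounded coherence over a regular base; a convenient alternative, if one wants to avoid even this, is to note that $\perf(X,A) = \rep(A^{\op}, \perf_\dg(X))$ and use the already-recorded identification together with the fact that $\perf_\dg(X)$ is smooth and proper, but the resolution route is the most self-contained.
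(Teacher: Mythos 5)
Your proposal is correct and takes essentially the same route as the paper: both reduce to the abelian category $\mathrm{Coh}(X,A)$ of $\cO_X$-coherent modules via Quillen's Resolution Theorem applied to the inclusion $\mathrm{vect}(X,A)\subset\mathrm{Coh}(X,A)$, and then identify $K_0\perf(X,A)$ with $K_0\mathrm{Coh}(X,A)$ --- the paper does this by writing down the explicit Euler-characteristic inverse $\cG\mapsto\sum_i(-1)^i[H^i(\cG)]$, which is precisely the standard proof of the identification $K_0(\cD^b(\mathrm{Coh}(X,A)))\simeq K_0(\mathrm{Coh}(X,A))$ that you invoke. The global finite-resolution statement you rightly isolate as the technical crux is exactly what the paper outsources to Merkurjev--Panin \cite[\S1.1]{MP}, where it is proved by the ample-twist argument you sketch.
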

\begin{proof}
Let $\mathrm{Coh}(X,A)$ be the abelian category of those right $(\cO_X \otimes A)$-modules which are coherent as $\cO_X$-modules. As explained in \cite[\S1.1]{MP}, Quillen's resolution theorem implies that the canonical inclusion $\mathrm{vect}(X,A) \subset \mathrm{Coh}(X,A)$ give rise to an isomorphism $K_0\mathrm{vect}(X,A) \simeq K_0\mathrm{Coh}(X,A)$. Making use of it one defines
\begin{eqnarray}\label{eq:Euler}
K_0\perf(X,A) \too K_0\mathrm{vect}(X,A) && \cG \mapsto \sum_i (-1)^i H^i(\cG)\,.
\end{eqnarray}
A simple verification shows that \eqref{eq:Euler} is the inverse of the induced homomorphism $K_0\mathrm{vect}(X,A) \to K_0\perf(X,A)$. This achieves the proof.
\end{proof}
Let $\cE$ be an abelian category. As explained in \cite[\S4.4]{ICM-Keller}, the derived category $\cD_\dg(\cE)$ of $\cE$ is the dg quotient $\cC_\dg(\cE)/\cA c_\dg(\cE)$ of the dg category of complexes over $\cE$ by its full dg subcategory of acyclic complexes. In what follows, we will write $\cD_\dg(\cO_X \otimes A)$ for the dg category $\cD_\dg(\cE)$, with $\cE:= \Mod(\cO_X \otimes A)$, and $\perf_\dg(X,A)$ for the full dg subcategory of those complexes of right $(\cO_X \otimes A)$-modules which belong to $\perf(X,A)$. By construction, we have $\dgHo(\cD_\dg(\cO_X \otimes A)) \simeq \cD(\cO_X \otimes A)$ and $\dgHo(\perf_\dg(X,A)) \simeq \perf(X,A)$.
\begin{lemma}\label{lem:smooth}
The dg category $\perf_\dg(X,A)$ is smooth and proper.
\end{lemma}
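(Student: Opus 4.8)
The plan is to exhibit $\perf_\dg(X,A)$ as a direct summand, in $\Hmo$, of a dg category already known to be smooth and proper, and then invoke the fact (\cite[Thm.~5.8]{CT1}) that smooth and proper dg categories are exactly the rigid objects of $(\Hmo,-\otimes-,k)$, together with the stability of rigid objects under retracts. Concretely, since $A$ is a separable $k$-algebra, it is a finite-dimensional $k$-algebra which, after passing to a finite separable extension (or already over $k$ when $k$ is perfect), is semisimple of finite global dimension; in any case $A$ is smooth and proper as a dg category. Likewise $\perf_\dg(X)$ is smooth and proper because $X$ is smooth projective. Therefore the tensor product $A^\op \otimes \perf_\dg(X) = \perf_\dg(X) \otimes A^\op$ is smooth and proper, being a $\otimes$-product of rigid objects in the symmetric monoidal category $(\Hmo,-\otimes-,k)$.

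Next I would identify $\perf_\dg(X,A)$ with $\rep_\dg(A^\op,\perf_\dg(X))$, using the equality $\perf(X,A) = \rep(A^\op,\perf_\dg(X))$ recorded just before Lemma~\ref{lem:K0s} and the compatibility $\dgHo(\rep_\dg(\cA,\cB)) \simeq \rep(\cA,\cB)$ together with $\dgHo(\perf_\dg(X,A)) \simeq \perf(X,A)$. Since $A^\op$ and $\perf_\dg(X)$ are smooth and proper, the right-hand equivalence of \eqref{eq:reps} gives a derived Morita equivalence $\rep_\dg(A^\op,\perf_\dg(X)) \simeq (A^\op)^\op \otimes \perf_\dg(X) = A \otimes \perf_\dg(X)$. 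Hence $\perf_\dg(X,A)$ is derived Morita equivalent to $\perf_\dg(X) \otimes A$, which we have just argued is smooth and proper; and smoothness and properness are invariant under derived Morita equivalence (they are intrinsic properties of the image in $\Hmo$, as they characterise rigidity there). This finishes the argument.

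The one point that needs care — and which I expect to be the main (admittedly modest) obstacle — is the claim that a separable $k$-algebra $A$ defines a smooth and proper dg category. Properness is immediate since $A$ is finite-dimensional over $k$. For smoothness one uses that $A$ is a separable algebra, i.e. $A$ is projective as an $A^\op \otimes A = A \otimes_k A^{\op}$-module (equivalently, the multiplication map $A \otimes_k A \to A$ splits as a bimodule map); this is precisely the statement that the bimodule \eqref{eq:bimodule-Id} lies in $\cD_c(A^\op \otimes A)$, which is the definition of smoothness. Thus separability in the classical ring-theoretic sense coincides with Kontsevich smoothness for the associated one-object dg category, and one may simply cite this. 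Alternatively, if one prefers to avoid even this remark, one can reduce to the perfect-field case by a faithfully flat base change argument, since $A$ becomes a product of matrix algebras over separable field extensions, each of which is smooth; but the direct bimodule argument is cleaner.

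Assembling: $\perf_\dg(X,A) \stackrel{\text{Morita}}{\simeq} \perf_\dg(X) \otimes A$, a tensor product of the smooth and proper dg categories $\perf_\dg(X)$ and $A$, hence smooth and proper, hence so is $\perf_\dg(X,A)$. $\qed$
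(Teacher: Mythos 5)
Your argument is correct and is essentially the paper's own proof: properness of $A$ from finite-dimensionality, smoothness of $A$ from separability (projectivity as an $A$-$A$-bimodule), then the derived Morita equivalence $\perf_\dg(X,A)=\rep_\dg(A^\op,\perf_\dg(X))\simeq A\otimes\perf_\dg(X)$ from \eqref{eq:reps}, and finally smoothness and properness of the tensor product with $\perf_\dg(X)$. The opening talk of direct summands and retracts is never actually used and can be dropped.
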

\begin{proof}
Let us start by showing that $A$, considered as a dg category $A$, is smooth and proper. Since $k$ is a field, $A$ is a finite dimensional $k$-algebra; see Knus-Ojanguren \cite[\S III Prop.~3.2]{Knus}. This implies properness. As explained in \cite[\S III Thm.~1.4]{Knus}, $A$ is separable if and only if $A$ is projective as a $A\text{-}A$-bimodule. This implies smoothness. Making use of the right-hand-side of \eqref{eq:reps}, one then obtains the following derived Morita equivalence
\begin{equation}\label{eq:Morita-eq}
\perf_\dg(X,A) = \rep_\dg(A^\op, \perf_\dg(X)) \simeq A \otimes \perf_\dg(X)\,.
\end{equation}
The proof follows now from the fact that $\perf_\dg(X)$ is smooth and proper.
\end{proof}
\begin{proposition}[Projection formula]\label{prop:projective}
Let $f:X \to Y$ be a flat proper morphism in $\SmProj$ and $A,B,C \in \Sep$. Under these assumptions, we have canonical quasi-isomorphisms of dg functors
\begin{eqnarray*}
(\cG,\cG') & \mapsto & (\bfR f)_\ast(\cG) \otimes^\bfL_B \cG' \too (\bfR f)_\ast (\cG \otimes^\bfL_B f^\ast(\cG'))\\
(\cH,\cH') & \mapsto &\cH' \otimes^\bfL_B (\bfR f)_\ast(\cH) \too (\bfR f)_\ast (f^\ast(\cH) \otimes^\bfL_B \cH)\,,
\end{eqnarray*}
where $\cG \in \perf_\dg(X,A^\op \otimes B), \cG' \in \perf_\dg(Y,B^\op \otimes C), \cH \in \perf_\dg(X,B^\op \otimes C)$ and $\cH' \in \perf_\dg(Y,A^\op \otimes B)$.
\end{proposition}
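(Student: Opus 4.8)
The plan is to construct the two transformations by the standard adjunction recipe and then prove they are isomorphisms by reducing to the classical projection formula for perfect complexes of $\cO$-modules; throughout, the separable algebras $A$ and $C$ will be pure spectators, and the algebra $B$ --- which is semisimple, so that $-\otimes^\bfL_B-$ agrees with $-\otimes_B-$ --- will be ``induced away''. For the construction, note that $f^\ast$ and $(\bfR f)_\ast$ carry complexes with an action of a constant sheaf of $k$-algebras to the same, compatibly with the unit and counit, and that $f^\ast$ moreover commutes with $-\otimes_B-$ (which is built from $-\otimes_{\cO}-$). Hence the usual recipe --- tensor the counit $f^\ast(\bfR f)_\ast(\cG)\to\cG$ with $f^\ast(\cG')$ over $B$, then transpose along $f^\ast\dashv(\bfR f)_\ast$ --- produces a natural transformation $(\bfR f)_\ast(\cG)\otimes^\bfL_B\cG'\to(\bfR f)_\ast(\cG\otimes^\bfL_B f^\ast(\cG'))$, and the second transformation is obtained symmetrically (replacing $B$ by $B^\op$ and exchanging left with right). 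When $A=B=C=k$ this is precisely the classical projection morphism, which is an isomorphism (see, for instance, \cite{TT}).

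To see that the transformations are quasi-isomorphisms, I would argue as follows. Since quasi-isomorphisms are detected after forgetting all module structures, one may replace $\cG$ and $\cG'$ by their underlying $(\cO_X,B)$- and $(\cO_Y,B^\op)$-complexes. Now the multiplication map $\mathcal{P}\otimes_k B\to\cG$ --- where $\mathcal{P}$ is the underlying $\cO_X$-complex of $\cG$ and $B$ acts on the second factor --- exhibits $\cG$ as a termwise split direct summand of the ``induced'' complex $\mathcal{P}\otimes_k B$, a splitting being furnished by a separability idempotent of $B$ (which exists because, $B$ being separable, the multiplication $B\otimes B^\op\to B$ splits as a map of $B$-bimodules; see the proof of Lemma~\ref{lem:smooth}). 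Since every functor in play is additive and the transformation is natural in $\cG$, it suffices to treat $\cG=\mathcal{P}\otimes_k B$ with $\mathcal{P}\in\perf_\dg(X)$. For such $\cG$ one verifies the natural identifications $\cG\otimes_B f^\ast(\cG')\cong\mathcal{P}\otimes^\bfL_{\cO_X}f^\ast(\cG'_0)$, where $\cG'_0$ is the underlying $\cO_Y$-complex of $\cG'$, and --- using that $(\bfR f)_\ast$ commutes with the finite direct sum hidden in $-\otimes_k B$ --- $(\bfR f)_\ast(\cG)\otimes_B\cG'\cong(\bfR f)_\ast(\mathcal{P})\otimes^\bfL_{\cO_Y}\cG'_0$, under which the transformation becomes the classical projection isomorphism for $\mathcal{P}$ and $\cG'_0$. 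Being a retract of an isomorphism, the original transformation is therefore an isomorphism. (The same identifications settle the implicit well-definedness: $\mathcal{P}\otimes^\bfL_{\cO_X}f^\ast(\cG'_0)$ is perfect over $\cO_X$, and $(\bfR f)_\ast$ preserves perfect complexes since $f$ is proper between smooth projective schemes, where $\perf=\cD^b_{\mathrm{coh}}$.) The second transformation is handled identically.

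The hard part is not conceptual but organizational: one must check that all of the identifications above are natural and mutually compatible --- in particular that the abstractly constructed projection morphism, restricted to the induced summand $\mathcal{P}\otimes_k B$, genuinely reduces to the classical one --- and keep careful track of the spectator $A$- and $C$-actions and of the $B$-action being contracted at each step. None of this is deep, but it is what makes a complete write-up somewhat long.
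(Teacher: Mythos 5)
Your argument is correct, and in spirit it is exactly what the paper intends: the paper's entire proof is the one sentence that the argument is ``similar to'' Thomason--Trobaugh's Prop.~3.17, i.e.\ a reduction to the classical projection formula for perfect complexes of $\cO$-modules. What you supply, and what the paper leaves entirely implicit, is the mechanism for handling the separable coefficient algebra: the separability idempotent exhibits $\cG$ as a natural retract (compatibly with the spectator $\cO_X$- and $A^\op$-actions, since the splitting is given by right multiplications by elements of $B$) of the induced complex $\cP\otimes_k B$, for which both sides of the transformation visibly collapse to the classical statement; this retraction step is sound and is the only genuinely new ingredient beyond \cite{TT}. The one point to watch in a full write-up is that the statement asks for quasi-isomorphisms of \emph{dg functors} (these are later converted into quasi-isomorphisms of bimodules in Lemmas~\ref{lem:units} and~\ref{lem:auxiliar}), so the counit-and-transpose construction must be realized by actual maps of complexes, e.g.\ by fixing a functorial model of $(\bfR f)_\ast$ on $\perf_\dg(X,A^\op\otimes B)$, rather than only as a morphism in the derived category.
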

\begin{proof}
The proof is similar to the one of Thomason-Trobaugh~\cite[Prop.~3.17]{TT}.
\end{proof}
\begin{proposition}[Base-change formula]\label{prop:base-change}
Let $A$ be a separable $k$-algebra and 
$$ 
\xymatrix{
X' \ar[d]_-{f'} \ar[r]^-{g'} \ar@{}[dr]|{\ulcorner}& X \ar[d]^-f \\
Y' \ar[r]_-g & Y
}
$$
a cartesian square in $\SmProj$ with $f$ flat proper and $g$ flat. Under these assumptions, we have a canonical quasi-isomorphism of dg functors 
\begin{eqnarray*}
\perf_\dg(X,A) \ni\cG & \mapsto & g^\ast (\bfR f)_\ast(\cG) \to (\bfR f')_\ast (g')^\ast(\cG)\,.
\end{eqnarray*}
\end{proposition}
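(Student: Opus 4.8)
The plan is to deduce the statement from the classical flat base-change quasi-isomorphism for perfect complexes by carrying along the extra right $A$-action. First I would fix functorial models for the derived functors at play: following Thomason--Trobaugh, the derived direct images $(\bfR f)_\ast$ and $(\bfR f')_\ast$ are constructed at the level of complexes using the dg-quotient presentation $\cD_\dg(\cO_X \otimes A) = \cC_\dg(\cE)/\cA c_\dg(\cE)$ (with $\cE = \Mod(\cO_X \otimes A)$) together with a functorial flasque resolution, while the pullbacks $g^\ast$ and $(g')^\ast$ need no derivation since $g$ and $g'$ are flat. All of these constructions are $\cO$-linear, hence they automatically preserve the right $A$-action and descend to dg functors between the dg categories $\perf_\dg(-,A)$; here properness of $f$ (resp.\ $f'$) guarantees that the direct image of an object of $\perf_\dg(X,A)$ lies in $\perf_\dg(Y,A)$, and flatness of $g'$ that the pullback of an object of $\perf_\dg(X,A)$ lies in $\perf_\dg(X',A)$. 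The base-change morphism $g^\ast (\bfR f)_\ast(\cG) \to (\bfR f')_\ast (g')^\ast(\cG)$ is then induced, via the relevant adjunction units, by the $\cO$-linear comparison maps already available over $k$, so it is a natural transformation of dg functors.

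Next I would verify that this natural transformation is an objectwise quasi-isomorphism. Since the forgetful functor $\Mod(\cO_{X'} \otimes A) \to \Mod(\cO_{X'})$ is exact and conservative, a morphism of complexes of right $(\cO_{X'} \otimes A)$-modules is a quasi-isomorphism precisely when its underlying morphism of complexes of $\cO_{X'}$-modules is one. This reduces the claim to the case $A = k$, which is the classical flat base-change theorem for perfect complexes (Thomason--Trobaugh~\cite[Prop.~3.18]{TT}): one uses flatness of $g$ to know that $g^\ast$ is exact, properness of $f$ to know that $(\bfR f)_\ast$ has finite cohomological amplitude and preserves perfection, and the cartesian hypothesis together with flatness to match the two iterated functors on cohomology sheaves by a local affine computation.

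The point that requires care is \emph{strictness}: the proposition asserts a comparison of dg functors and a genuine natural transformation of dg functors, not merely a statement about the induced functors on homotopy categories, so the resolutions and adjunction maps above must be chosen functorially on the nose. A clean way to avoid this bookkeeping is to invoke the derived Morita equivalence $\perf_\dg(X,A) \simeq A \otimes \perf_\dg(X)$ of \eqref{eq:Morita-eq} (and its analogues over $Y$, $X'$, $Y'$), under which $(\bfR f)_\ast$, $g^\ast$, $(g')^\ast$ and $(\bfR f')_\ast$ all become $\id_A \otimes (-)$; the base-change transformation for $A$ is then $\id_A$ tensored with the one for $k$, so the whole statement reduces to the already-classical case $A = k$. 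Either route is routine given Lemma~\ref{lem:smooth} and Thomason--Trobaugh, so I expect the actual proof to occupy only a few lines, in the spirit of Proposition~\ref{prop:projective} above.
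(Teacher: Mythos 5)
Your proposal is correct and matches the paper's approach: the paper's entire proof is the one-line remark that the argument is similar to Thomason--Trobaugh \cite[Prop.~3.18]{TT}, and your reduction to the classical flat base-change by carrying along (or forgetting) the exact, conservative right $A$-action is precisely the intended adaptation. The extra care you take with strictness via the Morita equivalence \eqref{eq:Morita-eq} is a reasonable way to make the ``quasi-isomorphism of dg functors'' claim precise, in line with how the paper handles the analogous issue elsewhere.
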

\begin{proof}
The proof is similar to the one of Thomason-Trobaugh~\cite[Prop.~3.18]{TT}.
\end{proof}
Given $X, Y \in \SmProj$ and $A,B \in \Sep$, every $\cG \in \perf(X\times Y, A^\op \otimes B)$ gives rise to the following Fourier-Mukai dg functor
\begin{eqnarray*}
\Phi_\cG: \perf_\dg(X,A) \too \perf_\dg(Y,B) && \cE \mapsto (\bfR p^{XY}_Y)_\ast\left((p^{XY}_X)^\ast(\cE) \otimes_A^\bfL \cG\right)\,.
\end{eqnarray*}
\begin{lemma}\label{lem:key1}
We have a well-defined triangulated functor
\begin{eqnarray*}
\perf(X\times Y, A^\op \otimes B) \too \rep(\perf_\dg(X,A),\perf_\dg(Y,B)) && \cG \mapsto {}_{\Phi_\cG}\mathsf{B}\,.
\end{eqnarray*}
\end{lemma}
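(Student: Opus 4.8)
The plan is to check two things separately: that each bimodule ${}_{\Phi_\cG}\mathsf{B}$ genuinely lies in $\rep(\perf_\dg(X,A),\perf_\dg(Y,B))$, and that the rule $\cG\mapsto{}_{\Phi_\cG}\mathsf{B}$ is a well-defined triangulated functor. The first point is essentially free: for \emph{any} dg functor $F\colon\cC\to\cD$ one has ${}_F\mathsf{B}(c,-)=\cD(-,F(c))$, a representable right $\cD$-module and hence a compact object of $\cD(\cD)$, so that ${}_F\mathsf{B}\in\rep(\cC,\cD)$ --- this is exactly the observation that makes \eqref{eq:functor1} well-defined. Applied to the dg functor $\Phi_\cG$ defined above, it gives ${}_{\Phi_\cG}\mathsf{B}\in\rep(\perf_\dg(X,A),\perf_\dg(Y,B))$; the only non-formal input here, already used in defining $\Phi_\cG$, is that the three operations $(p^{XY}_X)^\ast$, $-\otimes^\bfL_A-$ and $(\bfR p^{XY}_Y)_\ast$ preserve perfectness --- which holds because $A$ is semisimple (so $-\otimes^\bfL_A-$ is exact and preserves $\cO_{X\times Y}$-perfectness) and because $p^{XY}_Y$ is smooth and proper (so $(\bfR p^{XY}_Y)_\ast$ sends perfect complexes to perfect complexes).

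For the second point I would fix, once and for all, dg-functorial models for the derived pullback, the derived tensor product over $A$, and the derived pushforward along $p^{XY}_Y$; the assignment $\cG\mapsto\Phi_\cG$ then becomes functorial, and composing with $F\mapsto{}_F\mathsf{B}$ and passing to $\dgHo(-)$ produces the sought functor $\perf(X\times Y,A^\op\otimes B)\to\rep(\perf_\dg(X,A),\perf_\dg(Y,B))$, well-defined up to canonical isomorphism independently of the choices. To see that it is triangulated, the key step is to rewrite the bimodule explicitly: using the adjunction between $(p^{XY}_Y)^\ast$ and $(\bfR p^{XY}_Y)_\ast$ one obtains
\[
{}_{\Phi_\cG}\mathsf{B}(\cE,\cF)\;\simeq\;\bfR\Hom_{\cO_{X\times Y}\otimes B}\big((p^{XY}_Y)^\ast\cF,\ (p^{XY}_X)^\ast\cE\otimes^\bfL_A\cG\big),
\]
natural in the triple $(\cE,\cF,\cG)$. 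For $(\cE,\cF)$ fixed this is a triangulated functor of $\cG$, since $\cG\mapsto(p^{XY}_X)^\ast\cE\otimes^\bfL_A\cG$ is exact (again by semisimplicity of $A$ and perfectness of $(p^{XY}_X)^\ast\cE$) and $\bfR\Hom_{\cO_{X\times Y}\otimes B}((p^{XY}_Y)^\ast\cF,-)$ is triangulated.

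It then remains to upgrade this objectwise triangulatedness to a statement about bimodules. Given a distinguished triangle $\cG'\stackrel{\phi}{\to}\cG\to\cG''\to\cG'[1]$ in $\perf(X\times Y,A^\op\otimes B)$, we may assume $\cG''\simeq\mathrm{cone}(\phi)$. The cone of the induced morphism ${}_{\Phi_{\cG'}}\mathsf{B}\to{}_{\Phi_\cG}\mathsf{B}$ in $\cD(\perf_\dg(X,A)^\op\otimes\perf_\dg(Y,B))$ is computed objectwise, and by the displayed formula it is naturally isomorphic to $\bfR\Hom_{\cO_{X\times Y}\otimes B}\big((p^{XY}_Y)^\ast(-),(p^{XY}_X)^\ast(-)\otimes^\bfL_A\mathrm{cone}(\phi)\big)\simeq{}_{\Phi_{\cG''}}\mathsf{B}$; hence ${}_{\Phi_{\cG'}}\mathsf{B}\to{}_{\Phi_\cG}\mathsf{B}\to{}_{\Phi_{\cG''}}\mathsf{B}\to{}_{\Phi_{\cG'}}\mathsf{B}[1]$ is a distinguished triangle, and compatibility with shifts and finite direct sums is clear from the same formula. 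I expect the only slightly delicate points to be these: ensuring that $\Phi_\cG$ is a dg functor rather than merely a triangulated one (which is precisely why semisimplicity of $A$ and smoothness and properness of $p^{XY}_Y$ are invoked), and making the objectwise identification of the cone with ${}_{\Phi_{\cG''}}\mathsf{B}$ natural enough in $(\cE,\cF)$ to yield an isomorphism of bimodules --- neither deep, but both requiring some bookkeeping with dg-functorial resolutions.
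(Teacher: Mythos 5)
Your proposal is correct and follows essentially the same route as the paper: construct $\Phi_\cG$ and the bimodule ${}_{\Phi_\cG}\mathsf{B}$ at the dg/chain level, observe that ${}_F\mathsf{B}$ always lands in $\rep$ because its columns are representable, and verify exactness objectwise. The only difference is one of emphasis: the paper's proof concentrates on well-definedness, showing that a quasi-isomorphism $\cG\to\cG'$ induces a quasi-isomorphism of bimodules (via \cite[Lemma~9.8]{MT}) and declares triangulatedness ``clear,'' whereas you spell out the triangulated part via the adjunction formula and the objectwise cone computation while compressing the descent step into the choice of dg-functorial models.
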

\begin{proof}
Every morphism $\eta: \cG \to \cG'$ of complexes of right $(\cO_{X \times Y} \otimes (A^\op \otimes B))$-modules gives rise to a morphism of dg functors $\Phi_\eta: \Phi_\cG \Rightarrow \Phi_{\cG'}$ and consequently to a morphism of bimodules ${}_{\Phi_\eta}\mathsf{B}: {}_{\Phi_\cG} \mathsf{B} \Rightarrow {}_{\Phi_{\cG'}}\mathsf{B}$. Whenever $\alpha$ is a quasi-isomorphism, $\dgHo(\Phi_\eta)$ is a natural isomorphism between triangulated functors. Using \cite[Lemma~9.8]{MT}, one then concludes that ${}_{\Phi_\eta}\mathsf{B}$ is a quasi-isomorphism of bimodules. This implies that the functor is well-defined. The fact that it is triangulated is clear by now.
\end{proof}
\subsection*{The functor $\Theta$ from $\underline{\cC}$ to $\NChow$.}
Let  $X,Y \in \SmProj$ and $A,B \in \Sep$. By combining Lemmas \ref{lem:K0s} and \ref{lem:key1}, one obtains the following homomorphism
\begin{equation}\label{eq:theta-maps}
K_0 \mathrm{vect}(X\times Y, A^\op \otimes B) \stackrel{[\cF] \mapsto [{}_{\Phi_\cF}\mathsf{B}]}{\too} K_0 \rep(\perf_\dg(X,A), \perf_\dg(Y,B)) \,.
\end{equation}
\begin{theorem}\label{thm:bridge}
The assignments $(X,A) \mapsto \perf_\dg(X,A)$ and $[\cF] \mapsto [{}_{\Phi_\cF}\mathsf{B}]$ give rise to a fully-faithful $\otimes$-functor $\Theta: \underline{\cC} \to \NChow$ making the diagrams commute
$$
\xymatrix{
\SmProj^\op \ar[d]_-\Phi \ar[rr]^-{X \mapsto \perf_\dg(X)} && \dgcat \ar[d]^-U & \Sep \ar[d]_-\Psi \ar[rr]^-{A \mapsto A} && \dgcat \ar[d]^-U \\
\underline{\cC} \ar[rr]_-\Theta && \NChow \subset \Hmo_0  & \underline{\cC} \ar[rr]_-\Theta && \NChow \subset \Hmo_0\,.
}
$$
\end{theorem}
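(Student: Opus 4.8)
The plan is to verify, in order, that $\Theta$ is a well-defined $\otimes$-functor, that it is fully faithful, and that the two squares commute. On objects, $\Theta(X,A):=\perf_\dg(X,A)$ lies in $\NChow$ by Lemma~\ref{lem:smooth}. On morphisms, $\Theta$ is the homomorphism \eqref{eq:theta-maps}; since by definition $\Hom_{\underline{\cC}}((X,A),(Y,B))=K_0\mathrm{vect}(X\times Y,A^\op\otimes B)$ and $\Hom_{\Hmo_0}(\perf_\dg(X,A),\perf_\dg(Y,B))=K_0\rep(\perf_\dg(X,A),\perf_\dg(Y,B))$, it has the correct source and target. Compatibility with identities reduces to checking that the Fourier-Mukai dg functor $\Phi_{\cO_\Delta\otimes A}$ attached to the identity kernel of $\underline{\cC}$ is quasi-isomorphic to $\id_{\perf_\dg(X,A)}$; then Lemma~\ref{lem:key1} shows ${}_{\Phi_{\cO_\Delta\otimes A}}\mathsf{B}$ is quasi-isomorphic to the bimodule \eqref{eq:bimodule-Id}, whose class is the identity in $\Hmo_0$.

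The heart of the proof is compatibility with composition. Using Lemma~\ref{lem:K0s} we may replace $\mathrm{vect}$ by $\perf$ everywhere; Merkurjev-Panin's composition law on $\underline{\cC}$ then becomes, at the level of $K_0$, the Fourier-Mukai convolution of kernels (pull back to $X\times Y\times Z$, tensor over $B$, push forward to $X\times Z$). One must establish the identity $\Phi_{\cF'\star\cF}\simeq\Phi_{\cF'}\circ\Phi_\cF$ of dg functors in this relative setting twisted by the separable algebras $A,B,C$; this is exactly where Proposition~\ref{prop:projective} (projection formula) and Proposition~\ref{prop:base-change} (base change) are used, following the classical Fourier-Mukai calculus. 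Granting it, Lemma~\ref{lem:key1} gives ${}_{\Phi_{\cF'\star\cF}}\mathsf{B}\simeq{}_{\Phi_{\cF'}\circ\Phi_\cF}\mathsf{B}$, and since $F\mapsto{}_F\mathsf{B}$ is the $\otimes$-functor \eqref{eq:functor1} whose target composition is the derived tensor product \eqref{eq:bimodules11}, the right-hand side equals ${}_{\Phi_\cF}\mathsf{B}\otimes^\bfL_{\perf_\dg(Y,B)}{}_{\Phi_{\cF'}}\mathsf{B}$, which is the composite of $\Theta([\cF])$ and $\Theta([\cF'])$ in $\Hmo_0$. I expect this convolution identity --- tracking the various module structures while invoking Propositions~\ref{prop:projective} and~\ref{prop:base-change}, and matching it against Merkurjev-Panin's own composition formula --- to be the main technical obstacle.

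For the monoidal structure, \eqref{eq:Morita-eq} together with the K\"unneth derived Morita equivalence $\perf_\dg(X\times Y)\simeq\perf_\dg(X)\otimes\perf_\dg(Y)$ gives $\perf_\dg(X\times Y,A\otimes B)\simeq\perf_\dg(X,A)\otimes\perf_\dg(Y,B)$ and $\perf_\dg(\Spec k,k)\simeq k$; one then checks these equivalences are compatible with the associativity, unit and symmetry constraints and that $\Theta$ carries external products of kernels to tensor products of bimodules. For full faithfulness, \eqref{eq:theta-maps} is the map on $K_0$ induced by the composite $\mathrm{vect}(X\times Y,A^\op\otimes B)\hookrightarrow\perf(X\times Y,A^\op\otimes B)\to\rep(\perf_\dg(X,A),\perf_\dg(Y,B))$ of Lemmas~\ref{lem:K0s} and~\ref{lem:key1}: the inclusion induces an isomorphism on $K_0$ by Lemma~\ref{lem:K0s}, while the second functor is an equivalence of categories --- the relative Fourier-Mukai (derived Morita) statement --- obtained by combining \eqref{eq:reps}, \eqref{eq:Morita-eq}, the self-duality $\perf_\dg(X)^\op\simeq\perf_\dg(X)$ and the K\"unneth equivalence, under which both sides become $\cD_c((A^\op\otimes B)\otimes\perf_\dg(X\times Y))$. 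Hence \eqref{eq:theta-maps} is bijective.

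Finally, for the left square, $\Phi(X)=(X,k)$ and $\Theta(X,k)=\perf_\dg(X,k)\simeq\perf_\dg(X)$ by \eqref{eq:Morita-eq}; a map $f\colon X\to Y$ goes to $[\cO_{\Gamma^t_f}]$, whose Fourier-Mukai functor (convolution with the structure sheaf of the transposed graph) is $\simeq f^\ast$, so ${}_{\Phi_{\cO_{\Gamma^t_f}}}\mathsf{B}\simeq{}_{f^\ast}\mathsf{B}=U(f^\ast)$. For the right square, $\Psi(A)=(\Spec k,A)$ and $\Theta(\Spec k,A)=\perf_\dg(\Spec k,A)\simeq A$ by \eqref{eq:Morita-eq}; a homomorphism $h\colon A\to B$ goes to $[{}_h\mathsf{B}]$, whose Fourier-Mukai functor over $\Spec k$ is $-\otimes_A{}_h\mathsf{B}$, with associated bimodule ${}_h\mathsf{B}=U(h)$. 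This gives the commutativity of both diagrams.
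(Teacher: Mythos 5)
Your proposal is correct and follows essentially the same route as the paper: identities and composition via the twisted Fourier--Mukai calculus (projection formula and base change), full faithfulness via Lemma~\ref{lem:K0s} combined with \eqref{eq:reps}, \eqref{eq:Morita-eq} and the K\"unneth equivalence \eqref{eq:boxtimes}, and the two commutative squares via $\Phi_{\cO_{\Gamma^t_f}}\simeq f^\ast$ and $\Phi_{{}_h\mathsf{B}}\simeq{}_h\mathsf{B}$. No substantive differences from the paper's argument.
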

\subsection*{Proof of Theorem~\ref{thm:bridge}}
We start by showing that $\Theta$ preserves the identities.
\begin{lemma}\label{lem:units}
For every $X \in \SmProj$ and $A \in \Sep$, the class $[{}_{\Phi_{(\cO_\Delta \otimes A)}}\mathsf{B}]$ agrees with the identity of $\perf_\dg(X,A)$ in $\NChow$.
\end{lemma}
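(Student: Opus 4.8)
The plan is to identify the Fourier--Mukai dg functor $\Phi_{\cO_\Delta\otimes A}$ attached to the ``diagonal kernel'' $\cO_\Delta\otimes A$ with the identity dg functor of $\perf_\dg(X,A)$, and then transport this identification through the dictionaries of \S\ref{sec:NCmotives}.

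First I would unwind what has to be shown. Recall from \S\ref{sec:NCmotives} that the identity of a smooth proper dg category $\cB$ in $\NChow\subset\Hmo_0$ is the class, in $K_0\cD_c(\cB^\op\otimes\cB)\simeq\Hom_{\NChow}(\cB,\cB)$ (using \eqref{eq:Hom-NChow} and Lemma~\ref{lem:smooth}), of the $\cB$-$\cB$-bimodule \eqref{eq:bimodule-Id}, i.e.\ of the image ${}_{\id_\cB}\mathsf{B}$ of $\id_\cB$ under \eqref{eq:functor1}. Taking $\cB:=\perf_\dg(X,A)$, the assertion is therefore that ${}_{\Phi_{(\cO_\Delta \otimes A)}}\mathsf{B}$ and ${}_{\id}\mathsf{B}$ have the same class in $K_0\cD_c$, and for this it suffices to produce an isomorphism ${}_{\Phi_{(\cO_\Delta \otimes A)}}\mathsf{B}\simeq{}_{\id}\mathsf{B}$ in $\cD_c$. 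As in the proof of Lemma~\ref{lem:key1}, such an isomorphism follows from a quasi-isomorphism of dg functors $\Phi_{\cO_\Delta\otimes A}\simeq\id_{\perf_\dg(X,A)}$ --- a coherent zig-zag of dg natural transformations built from fixed point-set models for $\bfR(-)_\ast$, $(-)^\ast$, and $(-)\otimes^\bfL(-)$.

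Second I would carry out the computation. Let $p_1,p_2\colon X\times X\to X$ be the two projections and $\Delta\colon X\hookrightarrow X\times X$ the diagonal, so that $p_1\circ\Delta=p_2\circ\Delta=\id_X$ and $\cO_\Delta\otimes A=\Delta_\ast(\cO_X\otimes A)$ is the diagonal $A$-bimodule pushed forward to $X\times X$. The projection formula for the (proper) closed immersion $\Delta$, together with $p_1\circ\Delta=\id_X$, gives a quasi-isomorphism $p_1^\ast(\cE)\otimes^\bfL_A(\cO_\Delta\otimes A)\simeq\Delta_\ast(\cE)$ for $\cE\in\perf_\dg(X,A)$, and the composition law for derived direct images, together with $p_2\circ\Delta=\id_X$, gives $(\bfR p_2)_\ast\Delta_\ast(\cE)\simeq\cE$; the base-change and projection formulas of Propositions~\ref{prop:projective} and \ref{prop:base-change} (applied to the flat projections) are what render all of these identifications functorial in $\cE$. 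Composing, $\Phi_{\cO_\Delta\otimes A}(\cE)\simeq\cE$ naturally in $\cE$, which is the desired quasi-isomorphism of dg functors; combined with the previous paragraph this yields $[{}_{\Phi_{(\cO_\Delta \otimes A)}}\mathsf{B}]=\id_{\perf_\dg(X,A)}$ in $\NChow$, whence (since $[\cO_\Delta\otimes A]$ is the identity of $(X,A)$ in $\underline{\cC}$) the claim.

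The main obstacle is bookkeeping rather than conceptual: the projection- and base-change maps are a priori only morphisms in derived categories, so one must either fix explicit models (flat, resp.\ injective, resolutions) in which the comparison maps lift to genuine dg natural transformations, or else argue entirely inside $\cD_c$ --- using that tensoring with ${}_F\mathsf{B}$ induces the $\dgHo$-extension of $F$ to $\cD_c$, that two objects of $\rep$ have the same class in $K_0\cD_c$ once they induce naturally isomorphic functors, and that $\dgHo(\Phi_{\cO_\Delta\otimes A})$ extends to the identity of $\cD_c(\perf_\dg(X,A))$ up to natural isomorphism by the computation above. Either route is routine, provided the naturality in $\cE$ of every step is tracked with care.
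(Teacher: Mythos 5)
Your proposal is correct and follows essentially the same route as the paper: reduce to exhibiting a quasi-isomorphism of dg functors $\Phi_{\cO_\Delta\otimes A}\Rightarrow\mathrm{Id}$ (which, as in Lemma~\ref{lem:key1}, forces the associated bimodules to be quasi-isomorphic and hence to have the same class), and obtain that quasi-isomorphism from the projection formula of Proposition~\ref{prop:projective} applied to the diagonal embedding together with the identities $p_i\circ\Delta=\id_X$ and the fact that $\cO_X\otimes A$ is the $\otimes$-unit. The only cosmetic difference is that the paper does not invoke base change here (that enters only in Lemma~\ref{lem:auxiliar}); your computation and its justification otherwise match the paper's step for step.
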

\begin{proof}
Let $\mathrm{Id}:\perf_\dg(X,A) \to \perf_\dg(X,A)$ be the identity dg functor. Since the identity of $\perf_\dg(X,A)$ in $\NChow$ is the class $[{}_{\mathrm{Id}}\mathsf{B}]$, one needs to show that
\begin{eqnarray}\label{eq:equality-classes}
[{}_{\Phi_{(\cO_\Delta \otimes A)}}\mathsf{B}]=[{}_{\mathrm{Id}} \mathsf{B}] & \text{in} & K_0\rep(\perf_\dg(X,A),\perf_\dg(X,A))\,.
\end{eqnarray}
Let $\iota$ be the composition $X \stackrel{\sim}{\to} \Delta \subset X \times X$ and $p$ (resp. $q$) the projection map from $X \times X$ to the first (resp. second) component.
Under these notations, we have the following natural quasi-isomorphisms
\begin{eqnarray}
\Phi_{(\cO_\Delta \otimes A)}(\cE) & := & (\bfR q)_\ast \left(p^\ast(\cE) \otimes^\bfL_A (\cO_\Delta \otimes A) \right) \nonumber \\
& \simeq & (\bfR q)_\ast \left(p^\ast(\cE) \otimes^\bfL_A (\bfR \iota)_\ast(\cO_X \otimes A) \right) \nonumber \\
& \simeq & (\bfR q)_\ast \left((\bfR \iota)_\ast(\iota^\ast(p^\ast(\cE))\otimes^\bfL_A(\cO_X \otimes A)) \right)\label{eq:star-1} \\
& \simeq & (\bfR q)_\ast \left((\bfR \iota)_\ast(\iota^\ast(p^\ast(\cE))) \right)\\ \label{eq:star-2}
& \simeq & (\bfR q \bfR \iota)_\ast ((p \iota)^\ast(\cE)) \simeq \cE\,. \label{eq:star-3}
\end{eqnarray}
Some explanations are in order: \eqref{eq:star-1} follows from Proposition~\ref{prop:projective} (with $f=\iota$ and $B,C$ equal to $A$), \eqref{eq:star-2} follows from the fact that $\cO_X \otimes A$ is the $\otimes$-unit of the symmetric monoidal dg category $\perf_\dg(X,A^\op \otimes A)$; and finally \eqref{eq:star-3} follows from the equalities $q\iota\simeq \id$ and $p \iota \simeq \id$. We obtain in this way a quasi-isomorphism $\Phi_{{\cO_\Delta \otimes A}} \Rightarrow \mathrm{Id}$ of dg functors. As in the proof of Lemma~\ref{lem:key1}, we conclude that the bimodules ${}_{\Phi_{(\cO_\Delta \otimes A)}}\mathsf{B}$ and ${}_{\mathrm{Id}}\mathsf{B}$ are quasi-isomorphic. This implies the above equality \eqref{eq:equality-classes} and so the proof is finished.
\end{proof}
Given $\cG \in \perf(X\times Y,A^\op \otimes B)$ and $\cG' \in \perf(Y \times Z, B^\op \otimes C)$,
consider the following perfect complex 
$$ \cG \star \cG' :=(\bfR p_{XYZ}^{XZ})_\ast\left((p_{XYZ}^{XY})^\ast(\cG)\otimes^\bfL_B(p_{XYZ}^{YZ})^\ast(\cG')\right) \in \perf(X \times Z, A^\op \otimes C)\,.$$
\begin{lemma}\label{lem:auxiliar} 
Under the above notations, we have the following equality
\begin{eqnarray*}
[{}_{\Phi_{\cG \star \cG'}}\mathsf{B}] =[{}_{\Phi_\cG}\mathsf{B} \otimes^\bfL_{\perf_\dg(Y,B)} {}_{\Phi_{\cG'}}\mathsf{B}] & \text{in} & K_0\rep(\perf_\dg(X,A),\perf_\dg(Z,C))\,.
\end{eqnarray*}
\end{lemma}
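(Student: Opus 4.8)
The plan is to reduce the claimed equality of classes in $K_0$ to a quasi-isomorphism of bimodules, exactly as in the proofs of Lemmas~\ref{lem:units} and \ref{lem:key1}. Concretely, the bimodule ${}_{\Phi_\cG}\mathsf{B} \otimes^\bfL_{\perf_\dg(Y,B)} {}_{\Phi_{\cG'}}\mathsf{B}$ represents the composite of the Fourier--Mukai functors $\Phi_{\cG'} \circ \Phi_\cG$ in $\NChow$ (this is the content of the identification $\Hom_{\Hmo}(\cA,\cB) \simeq \mathrm{Iso}\,\rep(\cA,\cB)$ with composition given by derived tensor product of bimodules, combined with the naturality in Lemma~\ref{lem:key1}), so it suffices to exhibit a natural quasi-isomorphism of dg functors $\Phi_{\cG \star \cG'} \Rightarrow \Phi_{\cG'} \circ \Phi_\cG$ and then invoke \cite[Lemma~9.8]{MT} as in Lemma~\ref{lem:key1} to pass to bimodules.

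So the heart of the matter is the standard ``composition of integral transforms is the integral transform of the convolved kernel'' identity, carried out in the relative-over-a-separable-algebra setting. First I would unwind both sides applied to $\cE \in \perf_\dg(X,A)$. Writing $p_{XY},p_{YZ},p_{XZ}$ for the three projections off $X\times Y\times Z$ and using the obvious projections off the pairwise products, the functor $\Phi_{\cG'}\circ\Phi_\cG$ sends $\cE$ to
\begin{equation*}
(\bfR p^{YZ}_Z)_\ast\Big( (p^{YZ}_Y)^\ast\big( (\bfR p^{XY}_Y)_\ast( (p^{XY}_X)^\ast\cE \otimes^\bfL_A \cG) \big) \otimes^\bfL_B \cG' \Big)\,.
\end{equation*}
Then I would transport this onto $X\times Y\times Z$: apply the base-change formula (Proposition~\ref{prop:base-change}) to the cartesian square expressing $(p^{YZ}_Y)^\ast (\bfR p^{XY}_Y)_\ast$ as $(\bfR p^{XYZ}_{YZ})_\ast (p^{XYZ}_{XY})^\ast$, then the projection formula (Proposition~\ref{prop:projective}) to move the $\otimes^\bfL_B\cG'$ inside $(\bfR p^{XYZ}_{YZ})_\ast$ after pulling $\cG'$ back along $p^{XYZ}_{YZ}$, and finally collapse the two successive pushforwards using $p^{YZ}_Z \circ p^{XYZ}_{YZ} = p^{XZ}_Z \circ p^{XYZ}_{XZ}$ and one more base-change identification along the square relating $X\times Y\times Z$, $X\times Z$, $X\times Y$, and $X$. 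After these manipulations the result is
\begin{equation*}
(\bfR p^{XZ}_Z)_\ast\Big( (p^{XZ}_X)^\ast\cE \otimes^\bfL_A (\bfR p^{XYZ}_{XZ})_\ast\big( (p^{XYZ}_{XY})^\ast\cG \otimes^\bfL_B (p^{XYZ}_{YZ})^\ast\cG' \big) \Big) = \Phi_{\cG\star\cG'}(\cE)\,,
\end{equation*}
where the last projection-formula step requires pulling $\cE$ back from $X$ appropriately. Each isomorphism above is natural in $\cE$, so they assemble into the desired quasi-isomorphism of dg functors.

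The main obstacle is bookkeeping rather than anything conceptual: one must be careful that Propositions~\ref{prop:projective} and \ref{prop:base-change} as stated are for morphisms in $\SmProj$ with the stated flatness/properness hypotheses, and that all the projections among products of smooth projective $k$-schemes are indeed flat and proper, so the hypotheses are met; and one must track the separable-algebra decorations (which $A,B,C$ sits where) consistently through each step, since the projection and base-change formulas in the excerpt are precisely formulated to respect those decorations. One also uses implicitly that these dg-functorial quasi-isomorphisms are compatible with composition (a coherence point already used freely in Lemma~\ref{lem:units}). Once the natural quasi-isomorphism $\Phi_{\cG\star\cG'}\Rightarrow \Phi_{\cG'}\circ\Phi_\cG$ is in hand, Lemma~\ref{lem:key1} together with \cite[Lemma~9.8]{MT} gives a quasi-isomorphism ${}_{\Phi_{\cG\star\cG'}}\mathsf{B} \simeq {}_{\Phi_\cG}\mathsf{B}\otimes^\bfL_{\perf_\dg(Y,B)}{}_{\Phi_{\cG'}}\mathsf{B}$, which yields the asserted equality of classes in $K_0\rep(\perf_\dg(X,A),\perf_\dg(Z,C))$.
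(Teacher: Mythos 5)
Your proposal is correct and follows essentially the same route as the paper: reduce the $K_0$-identity to a natural quasi-isomorphism of dg functors between $\Phi_{\cG\star\cG'}$ and $\Phi_{\cG'}\circ\Phi_\cG$, obtained by the standard convolution-of-kernels manipulation using Propositions~\ref{prop:projective} and \ref{prop:base-change} (with the same cartesian square over $Y$), and then pass to a quasi-isomorphism of bimodules via \cite[Lemma~9.8]{MT} as in Lemmas~\ref{lem:key1} and \ref{lem:units}. The only difference is cosmetic: you run the chain of quasi-isomorphisms from $\Phi_{\cG'}\circ\Phi_\cG$ toward $\Phi_{\cG\star\cG'}$ whereas the paper goes the other way.
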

\begin{proof}
Recall from \S\ref{sec:dg}\text{-}\ref{sec:NCmotives} that $[{}_{\Phi_\cG}\mathsf{B} \otimes^\bfL_{\perf_\dg(Y,B)} {}_{\Phi_{\cG'}}\mathsf{B}]= [{}_{\Phi_{\cG'} \circ \Phi_\cG}\mathsf{B}]$. The proof will consist on showing that ${}_{\Phi_{\cG \star \cG'}}\mathsf{B}$ and ${}_{\Phi_{\cG'}\circ \Phi_{\cG}}\mathsf{B}$ are quasi-isomorphic, which implies automatically the above equality. We have the following natural quasi-isomorphisms
\begin{eqnarray}
\Phi_{\cG \star \cG'}(\cE) &:= & (\bfR p_Z^{XZ})_\ast \left((p_X^{XZ})^\ast(\cE) \otimes^\bfL_A (\bfR p^{XYZ}_{XZ})_\ast((p^{XYZ}_{XY})^\ast (\cG) \otimes^\bfL_B (p^{XYZ}_{YZ})^\ast(\cG')) \right) \nonumber \\
&& (\bfR p^{YZ}_Z)_\ast (\bfR p^{XYZ}_{YZ})_\ast \left((p^{XYZ}_{XY})^\ast ((p^{XYZ}_{XY})^\ast(\cE) \otimes^\bfL_B \cG) \otimes^\bfL_B (p^{XYZ}_{YZ})^\ast (\cG') \right) \label{eq:star1-1-1} \\
&& (\bfR p^{YZ}_Z)\left((\bfR p^{XYZ}_{YZ})_\ast (p^{XYZ}_{XY})^\ast ((p^{XYZ}_{XY})^\ast(\cE) \otimes^\bfL_A \cG) \otimes^\bfL_B \cG'\right) \label{eq:star2-2-2} \\
& \simeq & (\bfR p^{YZ}_Z)_\ast \left((p_Y^{YZ})^\ast (\bfR p^{XY}_Y)_\ast ((p^{XYZ}_{XY})^\ast(\cE) \otimes^\bfL_A \cG) \otimes^\bfL_B \cG' \right) \label{eq:star3-3-3} \\
&=& (\bfR p^{YZ}_Z)_\ast \left((p^{YZ}_Y)^\ast \Phi_\cG(\cE) \otimes^\bfL_B \cG' \right)= \Phi_{\cG'} (\Phi_\cG(\cE)) = \Phi_{\cG' \circ \cG}(\cE)\,. \nonumber
\end{eqnarray}
Some explanations are in order: \eqref{eq:star1-1-1} follows from Proposition~\ref{prop:projective} (with $f=p^{XYZ}_{XZ}$) and from the equalities $p^{XZ}_Z p^{XYZ}_{XZ} = p^{XYZ}_Z, p^{XZ}_X p^{XYZ}_{XZ} = p^{XY}_X p^{XYZ}_{XY}$ and $p^{XYZ}_Z = p^{YZ}_Z p^{XYZ}_{YZ}$; \eqref{eq:star2-2-2} follows from Proposition~\ref{prop:projective} (with $f=p^{XYZ}_{XY}$); and finally \eqref{eq:star3-3-3} follows from Proposition~\ref{prop:base-change} applied to the cartesian square
$$
\xymatrix{
X \times Y \times Z \ar[r]^-{p^{XYZ}_{YZ}} \ar[d]_-{p^{XYZ}_{XY}} \ar@{}[dr]|{\ulcorner} & Y \times Z \ar[d]^-{p^{YZ}_Y} \\
X \times Y \ar[r]_-{p^{XY}_Y} & Y\,.
}
$$
We obtain in this way a zig-zag of quasi-isomorphisms of dg functors between $\Phi_{\cG \star \cG'}$ and $ \Phi_{\cG' \circ \cG}$. Similarly to the proof of Lemma~\ref{lem:units}, we conclude that the bimodules ${}_{\Phi_{\cG \star \cG'}}\mathsf{B}$ and ${}_{\Phi_{\cG'}\circ \Phi_{\cG}}\mathsf{B}$ are quasi-isomorphic. This achieves the proof.
\end{proof}
Let us now show that $\Theta$ preserves the composition law.
\begin{lemma}\label{lem:composition}
Given $\cF \in \mathrm{vect}(X \times Y, A^\op \otimes B)$ and $\cF' \in \mathrm{vect}(Y \times Z, B^\op \otimes C)$, the image of $[\cF']\circ [\cF] \in K_0 \mathrm{vect}(X \times Z,A^\op\otimes C)$ under the above homomorphism \eqref{eq:theta-maps} agrees with the class $[{}_{\Phi_\cG}\mathsf{B} \otimes^\bfL_{\perf_\dg(Y,B)} {}_{\Phi_{\cG'}}\mathsf{B}]$.
\end{lemma}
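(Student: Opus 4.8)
The plan is to compute the image of $[\cF']\circ[\cF]$ by pushing it through the two factors out of which the homomorphism \eqref{eq:theta-maps} is built — the isomorphism $K_0\mathrm{vect}(X\times Z,A^\op\otimes C)\simeq K_0\perf(X\times Z,A^\op\otimes C)$ of Lemma~\ref{lem:K0s} (induced by the canonical inclusion, with inverse the Euler-characteristic map \eqref{eq:Euler}), followed by the homomorphism $\cG\mapsto[{}_{\Phi_\cG}\mathsf{B}]$ of Lemma~\ref{lem:key1} — and then invoking Lemma~\ref{lem:auxiliar}. Writing $\cG:=\cF$ and $\cG':=\cF'$ (regarded as perfect complexes concentrated in degree zero on $X\times Y$ and $Y\times Z$), the key assertion will be that the first factor sends $[\cF']\circ[\cF]$ to $[\cG\star\cG']$. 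Granting this, the second factor carries $[\cG\star\cG']$ to $[{}_{\Phi_{\cG\star\cG'}}\mathsf{B}]$, which by Lemma~\ref{lem:auxiliar} equals $[{}_{\Phi_\cG}\mathsf{B}\otimes^\bfL_{\perf_\dg(Y,B)}{}_{\Phi_{\cG'}}\mathsf{B}]$ — the class in the statement — and the proof is complete.

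To establish the key assertion I would argue as follows. Since $k$ is a field, the separable $k$-algebra $B$ is semisimple (see Knus-Ojanguren \cite{Knus}), so every $B$-module is flat; hence the derived tensor product over $B$ in the definition of $\cG\star\cG'$ may be replaced by the underived one, and
\[
(p^{XYZ}_{XY})^\ast(\cF)\otimes^\bfL_B(p^{XYZ}_{YZ})^\ast(\cF')\;\simeq\;(p^{XYZ}_{XY})^\ast(\cF)\otimes_B(p^{XYZ}_{YZ})^\ast(\cF')\;=:\;\cM
\]
is a coherent $(\cO_{X\times Y\times Z}\otimes(A^\op\otimes C))$-module concentrated in degree zero — precisely the sheaf occurring inside the direct image in the definition of the composition $[\cF']\circ[\cF]$ in $\underline{\cC}$. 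Because the projection $p^{XYZ}_{XZ}$ is flat and proper, $\bfR(p^{XYZ}_{XZ})_\ast$ preserves perfect complexes (as already used for $\cG\star\cG'$ itself and in Propositions~\ref{prop:projective} and \ref{prop:base-change}), so $\cG\star\cG'\simeq\bfR(p^{XYZ}_{XZ})_\ast(\cM)$ in $\perf(X\times Z,A^\op\otimes C)$. Finally, the cohomology sheaves of $\bfR(p^{XYZ}_{XZ})_\ast(\cM)$ are the higher direct images $R^i(p^{XYZ}_{XZ})_\ast(\cM)$, so the Euler-characteristic map \eqref{eq:Euler} sends $[\cG\star\cG']$ to $\sum_i(-1)^i[R^i(p^{XYZ}_{XZ})_\ast(\cM)]$ in $K_0\mathrm{vect}(X\times Z,A^\op\otimes C)$; and this alternating sum is exactly $[\cF']\circ[\cF]$, since Merkurjev-Panin's direct image $(p^{XYZ}_{XZ})_\ast$ at the level of $K_0$ is by definition $[\cM]\mapsto\sum_i(-1)^i[R^i(p^{XYZ}_{XZ})_\ast(\cM)]$, read off through the same identification $K_0\mathrm{vect}\simeq K_0\mathrm{Coh}$ used to set up \eqref{eq:Euler} and in \cite[\S1.3]{MP}.

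The main obstacle is precisely this last step: it is entirely a matter of bookkeeping, but one must check carefully that the Euler-characteristic isomorphism $K_0\perf\simeq K_0\mathrm{vect}$ underlying Lemma~\ref{lem:K0s} and Merkurjev-Panin's $K_0$-theoretic pushforward are set up with matching conventions, so that the derived recipe $\cG\star\cG'=\bfR(p^{XYZ}_{XZ})_\ast(\cM)$ and the underived recipe defining $[\cF']\circ[\cF]$ literally produce the same class. Once the semisimplicity of $B$ has been used to collapse $\otimes^\bfL_B$ to $\otimes_B$, everything else — the preservation of perfect complexes under proper pushforward and the two invocations of Lemmas~\ref{lem:key1} and \ref{lem:auxiliar} — is formal.
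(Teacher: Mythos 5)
Your proposal follows the paper's proof exactly: both decompose \eqref{eq:theta-maps} into the Lemma~\ref{lem:K0s} isomorphism followed by the map of Lemma~\ref{lem:key1}, identify the image of $[\cF']\circ[\cF]$ under the first factor with $[\cF\star\cF']$, and then conclude via Lemma~\ref{lem:auxiliar}. The only difference is that you spell out the justification of the identification $[\cF']\circ[\cF]=[\cF\star\cF']$ (semisimplicity of $B$ collapsing $\otimes^\bfL_B$ to $\otimes_B$, and the matching of the Euler-characteristic map with Merkurjev--Panin's $K_0$-level pushforward), a step the paper simply asserts with ``Note that''; this is a correct and welcome elaboration rather than a divergence.
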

\begin{proof}
Note that under the isomorphism of Lemma~\ref{lem:K0s}
$$ K_0 \mathrm{vect}(X\times Z, A^\op \otimes C) \stackrel{\sim}{\too} K_0 \perf(X \times Z, A^\op \otimes C)\,,$$
the composition $[\cF']\circ [\cF]$ agrees with $[\cF\star \cF']$. This implies that the image of $[\cF'] \circ [\cF]$ under the homomorphism \eqref{eq:theta-maps} agrees with $[{}_{\Phi_{\cF \star \cF'}}\mathsf{B}]$. Thanks to Lemma~\ref{lem:auxiliar}, this latter class is equal to $[{}_{\Phi_\cG}\mathsf{B} \otimes^\bfL_{\perf_\dg(Y,B)} {}_{\Phi_{\cG'}}\mathsf{B}]$ and so the proof is finished. 
\end{proof}
The above Lemmas~\ref{lem:units} and \ref{lem:composition} imply that the functor $\Theta$ is well-defined. Let us now show that it is fully-faithful.
\begin{lemma}
The above homomorphisms \eqref{eq:theta-maps} are isomorphisms.
\end{lemma}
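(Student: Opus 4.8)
The plan is to show that the homomorphism \eqref{eq:theta-maps} is an isomorphism by identifying both source and target with $K_0$ of the same triangulated category, in a way compatible with the map. First I would recall the identification $\perf(X\times Y, A^\op \otimes B) = \rep(A^\op \otimes \perf_\dg(X), B^\op \otimes \perf_\dg(Y))$ coming from the equality $\perf(X,A)=\rep(A^\op,\perf_\dg(X))$ established just before Lemma~\ref{lem:K0s}, together with the derived Morita equivalence \eqref{eq:Morita-eq} that says $\perf_\dg(X,A)\simeq A\otimes \perf_\dg(X)$. Combining these with Lemma~\ref{lem:smooth} (so that $\perf_\dg(X,A)$ and $\perf_\dg(Y,B)$ are smooth and proper dg categories) and the left-hand equivalence in \eqref{eq:reps}, one gets
$$\rep(\perf_\dg(X,A),\perf_\dg(Y,B)) \simeq \cD_c(\perf_\dg(X,A)^\op \otimes \perf_\dg(Y,B)) \simeq \perf(X\times Y, A^\op \otimes B)\,,$$
the last step again using the Morita equivalences and $\perf_\dg(X)^\op \otimes \perf_\dg(Y)\simeq \perf_\dg(X\times Y)$ for smooth projective schemes. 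Passing to $K_0$ and invoking Lemma~\ref{lem:K0s} to replace $K_0\perf(X\times Y,A^\op\otimes B)$ by $K_0\mathrm{vect}(X\times Y,A^\op\otimes B)$ produces an abstract isomorphism between the source and the target of \eqref{eq:theta-maps}.

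The remaining — and genuinely essential — point is that this abstract isomorphism actually \emph{coincides} with the map $[\cF]\mapsto [{}_{\Phi_\cF}\mathsf{B}]$ of \eqref{eq:theta-maps}, rather than merely having the same source and target. So the second step is to trace through the chain of equivalences above and check that, under all the Morita identifications, a perfect complex $\cF$ on $X\times Y$ with its $(A^\op\otimes B)$-module structure is carried to precisely the bimodule ${}_{\Phi_\cF}\mathsf{B}$ represented by the Fourier--Mukai dg functor $\Phi_\cF$. Concretely, the equivalence $\cD_c(\cA^\op\otimes\cB)\simeq \rep(\cA,\cB)$ of \eqref{eq:reps} sends a bimodule to the corresponding right-module-valued functor, and the Fourier--Mukai construction $\Phi_\cF(\cE)=(\bfR p^{XY}_Y)_\ast((p^{XY}_X)^\ast \cE \otimes^\bfL_A \cF)$ is exactly the integral-transform description of this equivalence for $\perf_\dg$ of schemes (a well-known fact, e.g. via Toën's derived Morita theory); the separable-algebra coefficients are carried along formally through the tensor-product Morita equivalences. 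I would phrase this as: the homomorphism \eqref{eq:theta-maps} factors as $K_0\mathrm{vect}(X\times Y,A^\op\otimes B)\xrightarrow{\sim} K_0\perf(X\times Y,A^\op\otimes B)$ (Lemma~\ref{lem:K0s}) followed by the $K_0$ of the functor of Lemma~\ref{lem:key1}, and that latter functor $\cG\mapsto {}_{\Phi_\cG}\mathsf{B}$ is an \emph{equivalence} of triangulated categories onto $\rep(\perf_\dg(X,A),\perf_\dg(Y,B))$ because, after the Morita equivalences, it becomes the standard integral-transform equivalence $\perf_\dg(X\times Y)\simeq \rep_\dg(\perf_\dg(X),\perf_\dg(Y))\simeq \perf_\dg(X)^\op\otimes\perf_\dg(Y)$ tensored with $A^\op\otimes B$.

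The main obstacle I anticipate is exactly this identification of the Fourier--Mukai functor with the abstract Morita equivalence: one must be careful that $\Phi_\cF$ lands in $\rep$ (not just in all bimodules), that the construction is functorial at the dg level so that it passes to derived/homotopy categories, and that the various base-change and projection-formula compatibilities (Propositions~\ref{prop:projective} and \ref{prop:base-change}) make the square with the scheme-level statement commute. Most of this is already packaged in Lemma~\ref{lem:key1} and in the cited derived-Morita results, so in the write-up I would keep it brief: invoke the smooth-and-proper hypothesis (Lemma~\ref{lem:smooth}) to get \eqref{eq:reps}, use \eqref{eq:Morita-eq} and the corresponding fact for $\perf_\dg$ of smooth projective schemes to reduce to the classical integral-transform theorem, and conclude that the triangulated functor of Lemma~\ref{lem:key1} is an equivalence, whence \eqref{eq:theta-maps} — being its effect on $K_0$ composed with the isomorphism of Lemma~\ref{lem:K0s} — is an isomorphism.
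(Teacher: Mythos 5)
Your proposal follows essentially the same route as the paper: the paper's proof is exactly the chain $K_0\perf(X\times Y,A^\op\otimes B)=K_0\rep(A\otimes B^\op,\perf_\dg(X\times Y))\simeq K_0\cD_c((A\otimes\perf_\dg(X))^\op\otimes(B\otimes\perf_\dg(Y)))\simeq K_0\rep(\perf_\dg(X,A),\perf_\dg(Y,B))$, obtained from \eqref{eq:reps}, \eqref{eq:Morita-eq}, the K\"unneth equivalence \eqref{eq:boxtimes}, and $\perf_\dg(X)^\op\simeq\perf_\dg(X)$, preceded by Lemma~\ref{lem:K0s}. The compatibility you rightly flag --- that the resulting abstract isomorphism is realized by $[\cF]\mapsto[{}_{\Phi_\cF}\mathsf{B}]$ --- is left implicit in the paper, so your extra verification via the integral-transform description is a welcome (and correct) addition rather than a divergence.
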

\begin{proof}
Recall first from Lemma~\ref{lem:K0s} that the inclusion $\mathrm{vect}(X \times Y, A^\op \otimes B) \subset \perf(X \times Y, A^\op \otimes B)$ gives rise to an isomorphism between $K_0\mathrm{vect}(X \times Y, A^\op \otimes B)$ and $K_0 \perf(X \times Y, A^\op \otimes B)$. The proof follows from the sequence of isomorphisms
\begin{eqnarray}
K_0\perf(X \times Y, A^\op \otimes B) & = & K_0 \rep(A \otimes B^\op, \perf_\dg(X \times Y)) \nonumber \\
& \simeq & K_0 \cD_c((A \otimes \perf_\dg(X))^\op\otimes (B \otimes \perf_\dg(Y))) \label{eq:star1111} \\
& \stackrel{\eqref{eq:reps}}{\simeq} & K_0 \rep(A \otimes \perf_\dg(X), B \otimes \perf_\dg(Y))  \nonumber \\
& \stackrel{\eqref{eq:Morita-eq}}{\simeq} & K_0 \rep(\perf_\dg(X,A), \perf_\dg(Y,B))\,,\nonumber
\end{eqnarray}
where \eqref{eq:star1111} is a consequence of the derived Morita equivalence
\begin{equation}\label{eq:boxtimes}
\boxtimes : \perf_\dg(X) \otimes \perf_\dg(Y) \stackrel{\sim}{\too} \perf_\dg(X \times Y)
\end{equation}
(established in \cite[Prop.~6.2]{Regularity}) and from the fact that $\perf_\dg(X)^\op \simeq \perf_\dg(X)$. 
\end{proof}
By combining the derived Morita equivalences \eqref{eq:Morita-eq} and \eqref{eq:boxtimes}, we conclude that 
$$
\perf_\dg(X,A) \otimes \perf_\dg(Y,B) \simeq \perf_\dg(X \times Y, A \otimes B)
$$
for every $X,Y \in \SmProj$ and $A,B \in \Sep$, \ie that $\Theta$ is symmetric monoidal. It remains then only to show that the diagrams of Theorem~\ref{thm:bridge} are commutative.
\begin{lemma}
For every morphism $f: X \to Y$ in $\SmProj$, we have the equality
\begin{eqnarray}\label{eq:equality-diag-1}
[{}_{\Phi_{\cO_{\Gamma_f^t}}}\mathsf{B}] = [{}_{f^\ast}\mathsf{B}] & \mathrm{in} & K_0\rep(\perf_\dg(Y), \perf_\dg(X))\,.
\end{eqnarray}
\end{lemma}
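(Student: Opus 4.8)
The plan is to follow the template of the proof of Lemma~\ref{lem:units}: show that the Fourier--Mukai functor $\Phi_{\cO_{\Gamma_f^t}}\colon\perf_\dg(Y)\to\perf_\dg(X)$ is naturally quasi-isomorphic to the pullback dg functor $f^\ast$, and then transport this quasi-isomorphism through the bimodule construction $\Phi_\cG\mapsto{}_{\Phi_\cG}\mathsf{B}$ of Lemma~\ref{lem:key1}. Note that $f^\ast$ is exactly the image of $f$ under the functor $\SmProj^\op\to\dgcat$ of Theorem~\ref{thm:bridge}, so the asserted equality \eqref{eq:equality-diag-1} is precisely the commutativity, on morphisms, of the first square in Theorem~\ref{thm:bridge}.

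First I would introduce the closed immersion $\iota:=(f,\id_X)\colon X\to Y\times X$, whose image is the transpose graph $\Gamma_f^t$, and write $p:=p^{YX}_Y$, $q:=p^{YX}_X$ for the two projections of $Y\times X$. The three identities that make everything work are $\cO_{\Gamma_f^t}\simeq(\bfR\iota)_\ast(\cO_X)$, $p\iota=f$ and $q\iota=\id_X$, all immediate from the definition of the transpose graph. Feeding $\cO_{\Gamma_f^t}\simeq(\bfR\iota)_\ast(\cO_X)$ into the definition of $\Phi_{\cO_{\Gamma_f^t}}$ and applying the projection formula of Proposition~\ref{prop:projective} (with $f=\iota$ and all algebras equal to $k$, exactly as in Lemma~\ref{lem:units}) gives $p^\ast(\cE)\otimes^\bfL(\bfR\iota)_\ast(\cO_X)\simeq(\bfR\iota)_\ast\!\left(\iota^\ast p^\ast(\cE)\otimes^\bfL\cO_X\right)\simeq(\bfR\iota)_\ast(f^\ast(\cE))$, using $\iota^\ast p^\ast=f^\ast$ and that $\cO_X$ is the $\otimes$-unit of $\perf_\dg(X)$. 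Composing the pushforwards then yields $\Phi_{\cO_{\Gamma_f^t}}(\cE)\simeq(\bfR q)_\ast(\bfR\iota)_\ast(f^\ast(\cE))\simeq(\bfR(q\iota))_\ast(f^\ast(\cE))\simeq f^\ast(\cE)$, i.e.\ a zig-zag of quasi-isomorphisms of dg functors $\Phi_{\cO_{\Gamma_f^t}}\Rightarrow f^\ast$.

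To finish, I would argue as at the end of the proofs of Lemmas~\ref{lem:units} and \ref{lem:auxiliar}: a quasi-isomorphism of dg functors induces, through the assignment $F\mapsto{}_F\mathsf{B}$ together with \cite[Lemma~9.8]{MT}, a quasi-isomorphism ${}_{\Phi_{\cO_{\Gamma_f^t}}}\mathsf{B}\simeq{}_{f^\ast}\mathsf{B}$ of bimodules, whence the equality of classes \eqref{eq:equality-diag-1} in $K_0\rep(\perf_\dg(Y),\perf_\dg(X))$. The only point needing attention is, as usual, that the comparison maps above genuinely assemble into a quasi-isomorphism of \emph{dg functors} and not merely an objectwise equivalence; this is exactly the bookkeeping already performed twice in the paper, guaranteed by the naturality built into Proposition~\ref{prop:projective}, so I expect no new obstacle here. (One small caveat: $\iota$ is a closed immersion rather than flat, so one is really using the projection formula for a proper morphism against a perfect complex --- but this is already how Proposition~\ref{prop:projective} is invoked in Lemma~\ref{lem:units}, so it is harmless.)
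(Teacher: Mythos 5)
Your proposal matches the paper's own proof essentially verbatim: the same closed immersion $\iota$ onto $\Gamma_f^t$, the same application of the projection formula of Proposition~\ref{prop:projective} with all algebras equal to $k$, the same use of $\cO_X$ being the $\otimes$-unit and of the identities $p\iota=f$, $q\iota=\id$, and the same passage from a quasi-isomorphism of dg functors to an equality of bimodule classes. Your caveat about $\iota$ not being flat applies equally to the paper's own invocation of Proposition~\ref{prop:projective} here and in Lemma~\ref{lem:units}, so nothing new is at stake.
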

\begin{proof}
Let $\iota$ be the composition $X \stackrel{x \mapsto (f(x),x)}{\too} \Gamma_f^t \subset Y \times X$. Under this notation, we have the following natural quasi-isomorphisms
\begin{eqnarray}
\Phi_{\cO_{\Gamma_f^t}}(\cE) & := & (\bfR p^{XY}_X)_\ast ((p^{XY}_Y)^\ast(\cE) \otimes \cO_{\Gamma_f^t}) \nonumber \\
& \simeq & (\bfR p^{XY}_X)_\ast \left((p^{XY}_Y)^\ast(\cE) \otimes (\bfR \iota)_\ast(\cO_X) \right) \nonumber \\
& \simeq & (\bfR p^{XY}_X)_\ast \left((\bfR \iota)_\ast (\iota^\ast((p^{XY}_Y)^\ast(\cE)) \otimes \cO_X )\right) \label{eq:star111} \\
& \simeq & (\bfR p^{XY}_X)_\ast \left((\bfR \iota)_\ast (\iota^\ast (p^{XY}_Y)^\ast(\cE))\right)  \label{eq:star222} \\
& \simeq & (\bfR p^{XY}_X \iota)\ast \left((p^{XY}_Y \iota)^\ast (\cE) \right) \simeq f^\ast(\cE) \label{eq:star333}\,.
\end{eqnarray}
Some explanations are in order: \eqref{eq:star111} follows from Proposition~\ref{prop:projective} (with $f=\iota$ and $A,B,C$ equal to $k$), \eqref{eq:star222} follows from the fact that $\cO_X$ is the $\otimes$-unit of the symmetric monoidal dg category $\perf_\dg(X)$; and finally \eqref{eq:star333} follows from the equalities $p^{XY}_X \iota = \id$ and $p^{XY}_Y\iota =f$. We obtain in this way a quasi-isomorphism $\Phi_{\cO_{\Gamma_f^t}} \Rightarrow f^\ast$ of dg functors. As in the proof of Lemma~\ref{lem:units}, we conclude that the bimodules ${}_{\cO_{\Gamma^t_f}}\mathsf{B}$ and ${}_{f^\ast}\mathsf{B}$ are quasi-isomorphisms. This implies the above equality \eqref{eq:equality-diag-1} and so the proof is finished.
\end{proof}
\begin{lemma}\label{lem:diagram-2}
The right-hand-side diagram of Theorem~\ref{thm:bridge} is commutative.
\end{lemma}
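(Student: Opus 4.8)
The plan is to run the argument in complete parallel with the previous lemma (the one identifying $\Theta(\Phi(f))$ with the class $[{}_{f^\ast}\mathsf{B}]$), with the algebra slot of $\underline{\cC}$ and the extension-of-scalars functors playing the role of the variety slot and the pullbacks $f^\ast$. Recall that $\Psi$ sends a separable algebra $A$ to $(\Spec(k),A)$ and a $k$-algebra homomorphism $h:A\to B$ to $[{}_h\mathsf{B}]\in K_0\mathrm{vect}(\Spec(k)\times\Spec(k),A^\op\otimes B)=K_0(A^\op\otimes B)$; hence $\Theta(\Psi(h))$ is, by the very definition of $\Theta$, the class $[{}_{\Phi_{{}_h\mathsf{B}}}\mathsf{B}]$ of the bimodule associated via Lemma~\ref{lem:key1} to the Fourier-Mukai dg functor $\Phi_{{}_h\mathsf{B}}$. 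What I need is that this class equals $U(h)=[{}_h\mathsf{B}]$, the class attached to the dg functor $h:A\to B$ by \eqref{eq:functor1}. For the object-level statement I would simply note that $\Theta(\Psi(A))=\perf_\dg(\Spec(k),A)$ is derived Morita equivalent to $A$ by \eqref{eq:Morita-eq} applied with $X=\Spec(k)$ (using that $\perf_\dg(\Spec(k))$ is derived Morita equivalent to $k$), so $\Theta\circ\Psi$ and $U\circ(A\mapsto A)$ agree on objects and only the morphism statement remains.

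The crux is the computation of the Fourier-Mukai functor at the point. Since $\Spec(k)\times\Spec(k)=\Spec(k)$, the projections $p^{XY}_X$ and $p^{XY}_Y$ entering its definition are both the identity of $\Spec(k)$, so $\Phi_{{}_h\mathsf{B}}$ collapses to $\cE\mapsto\cE\otimes^\bfL_A{}_h\mathsf{B}$; under the identification of $\perf_\dg(\Spec(k),A)$ with the dg category of perfect complexes of right $A$-modules this is exactly the extension-of-scalars dg functor $-\otimes^\bfL_A B$ along $h$. I would then invoke the derived Morita equivalence \eqref{eq:Morita-eq}, which identifies $-\otimes^\bfL_A B$ with the dg functor $h:A\to B$ itself, together with the fact (from \S\ref{sec:dg} and \S\ref{sec:NCmotives}) that the morphism of $\Hmo$ induced by a dg functor $F$ is represented by the bimodule ${}_F\mathsf{B}$. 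This produces a zig-zag of quasi-isomorphisms of bimodules, compatible with the Morita equivalences of \eqref{eq:Morita-eq}, between ${}_{\Phi_{{}_h\mathsf{B}}}\mathsf{B}$ and ${}_h\mathsf{B}$, and passing to $K_0$ — exactly as in the proofs of Lemmas~\ref{lem:units} and \ref{lem:composition} — yields $[{}_{\Phi_{{}_h\mathsf{B}}}\mathsf{B}]=[{}_h\mathsf{B}]$, i.e. $\Theta(\Psi(h))=U(h)$.

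The main obstacle, and really the only non-formal point, is this last piece of bookkeeping: one must transport the bimodule ${}_{\Phi_{{}_h\mathsf{B}}}\mathsf{B}$ across the Morita equivalence \eqref{eq:Morita-eq} and check that it is quasi-isomorphic to ${}_h\mathsf{B}$, i.e. that the extension-of-scalars dg functor corresponds under \eqref{eq:Morita-eq} to $h$ and that this correspondence is already visible at the level of the representing bimodules. Everything else — the collapse of the Fourier-Mukai formula over $\Spec(k)$, and the stability of the conclusion under $K_0$ — is a routine specialization of material already developed in this section.
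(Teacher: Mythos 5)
Your argument is correct and follows essentially the same route as the paper: the paper's proof likewise considers the Fourier--Mukai dg functor $\Phi_{{}_h\mathsf{B}}$ over $\mathrm{Spec}(k)$ and uses the derived Morita equivalence \eqref{eq:Morita-eq} to identify its associated bimodule with ${}_h\mathsf{B}$ in $\Hmo$. You have merely spelled out the ``observation'' the paper leaves implicit, namely the collapse of the Fourier--Mukai formula over the point to extension of scalars along $h$.
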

\begin{proof}
Given a morphism $h:A \to B$ in $\Sep$, consider the Fourier-Mukai dg~functor
\begin{equation}\label{eq:bimodule-final}
\Phi_{{}_h\mathsf{B}}: \perf_\dg(\mathrm{Spec}(k),A) \to \perf_\dg(\mathrm{Spec}(k),B)\,.
\end{equation}
Making use of the derived Morita equivalence \eqref{eq:Morita-eq} (with $X=\mathrm{Spec}(k)$), one observes that the bimodule associated to \eqref{eq:bimodule-final} is isomorphic in the homotopy category $\Hmo$ to the $A\text{-}B$-bimodule ${}_h \mathsf{B} \in \rep(A,B)$. This achieves the proof.
\end{proof}
\section{Proof of Theorem~\ref{thm:main}}
\subsection*{Item (i)}
Since by hypothesis $E:\dgcat \to \mathsf{D}$ is an additive invariant, the equivalence of categories \eqref{eq:categories} furnish us a (unique) additive functor $\overline{E}$ making the following diagram commute
\begin{equation}\label{eq:com-diag}
\xymatrix{
\dgcat \ar[d]_-U \ar[r]^-E & \mathsf{D} \\
\Hmo_0 \ar[ur]_-{\overline{E}} & .
}
\end{equation}
Now, recall from \cite[\S6]{Panin} the construction of the category $\bbA^G$ and of the functors
\begin{eqnarray*}
\Phi: \SmProj^\op \too \bbA^G && \Psi: \Sep \too \bbA^G\,.
\end{eqnarray*}
In the particular case where $G$ is the trivial group $\{1\}$, $\bbA^{\{1\}}$ identifies with $\underline{\cC}$ and $\Phi,\Psi$ with the functors described above.  As proved by Panin in \cite[page~557]{Panin} (after applying the functor $F_\gamma: \bbA^G \to \bbA^{\{1\}}$), every $\mathsf{Ch}$-homogeneous basis $\rho_1, \ldots, \rho_n$ of $R(\widetilde{P})$ over $R(\widetilde{G})$ gives rise to an isomorphism in $\bbA^{\{1\}}$
\begin{equation}\label{eq:iso-Panin-aux}
\bigoplus^n_{i=1} \Psi(A_{\chi(i),\gamma})\simeq \Psi(\prod_{i=1}^n A_{\chi(i),\gamma}) \stackrel{\sim}{\too} \Phi({}_\gamma \cF)\,.
\end{equation}
Note that by construction of the category of noncommutative motives we have $\bigoplus_{i=1}^n U(A_{\chi(i),\gamma})= U(\prod^n_{i=1} A_{\chi(i),\gamma})$. Hence, by combining Theorem \ref{thm:bridge} with the above commutative diagram \eqref{eq:com-diag}, we conclude that the image of \eqref{eq:iso-Panin-aux} under
\begin{equation}\label{eq:composed}
\bbA^{\{1\}} = \underline{\cC} \stackrel{\Theta}{\too} \NChow \subset \Hmo_0 \stackrel{\overline{E}}{\too} \mathsf{D}
\end{equation}
agrees with the desired isomorphism \eqref{eq:iso-main-2}. This achieves the proof.
\subsection*{Item (ii)}
As proved in Merkurjev-Panin in \cite[Thm.~4.2]{MP}, $\Phi(X)$ is a direct summand of $\Psi(A)$ in the motivic category $\underline{\cC}$. Using Theorem~\ref{thm:bridge}, the above commutative diagram \eqref{eq:com-diag}, and the above composed functor \eqref{eq:composed}, we then conclude that $E(X)$ is a direct summand of $E(A)$ in the additive category $\mathsf{D}$.
\begin{remark}[Toric varieties]\label{rk:toric}
Let $T$ be an algebraic torus and $X$ a smooth projective toric $T$-variety. As proved by Merkurjev-Panin in \cite[Thm.~7.6]{MP} (making use of the splitting principle), there exists a separable $k$-algebra $B$ and a retraction of $\Psi(B)$ into $\Phi(X)$ in the motivic category $\underline{\cC}$. The above proof of item (ii) shows that Theorem~\ref{thm:main}(ii) and Theorem~\ref{thm:computation-direct} also hold in these cases.
\end{remark}
\section{Cyclic homology of separable algebras}\label{sec:computations}
\begin{theorem}\label{thm:degree}
Given a separable $k$-algebra $A$, we have:
$$
\begin{array}{lcl}
C(A) \simeq C(k)\otimes HH_0(A) && \\
HH(A) \simeq HH(k)\otimes HH_0(A) && HC(A) \simeq HC(k)\otimes HH_0(A) \\
HP(A) \simeq HP(k)\otimes HH_0(A) && HN(A) \simeq HN(k) \otimes HH_0(A)\,. 
\end{array}
$$
\end{theorem}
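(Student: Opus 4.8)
The plan is to reduce the five isomorphisms to the single statement $C(A)\simeq C(k)\otimes HH_0(A)$ in the derived category of mixed complexes $\cD(\Lambda)$, since $HH$, $HC$, $HP$ and $HN$ all factor through the mixed complex functor $C$. The starting point is the fact, recalled in the proof of Lemma~\ref{lem:smooth} (via Knus-Ojanguren), that a separable $k$-algebra $A$ is projective as a module over $A^e:=A\otimes A^\op$. Hence the derived tensor product $A\otimes^\bfL_{A^e}A$ is already underived, equal to $A\otimes_{A^e}A=A/[A,A]=HH_0(A)$ concentrated in degree zero; equivalently, the Hochschild complex of $A$ has homology $HH_0(A)$ in degree $0$ and zero in every positive degree.

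The key step is to promote this from Hochschild homology to the level of the mixed complex. Using the \emph{normalized} Hochschild complex -- for which one may take $C(k)$ to be the mixed complex $\underline{k}$ concentrated in degree $0$, with vanishing differential and vanishing Connes operator -- the canonical surjection $A\twoheadrightarrow A/[A,A]$ extends to a morphism of complexes from $C(A)$ to $HH_0(A)$ placed in degree $0$, and I would check that this is in fact a morphism of \emph{mixed} complexes: the only non-vacuous compatibilities concern the differential $b$ and the operator $B$ with target in degrees $0$ and $1$, and in each case the composite is either zero or factors through the commutator subspace $[A,A]$, hence vanishes. By the previous paragraph this morphism is a quasi-isomorphism of mixed complexes, so an isomorphism in $\cD(\Lambda)$; since $HH_0(A)$ is a $k$-vector space, its target is $C(k)\otimes HH_0(A)$, which gives the first isomorphism.

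For the remaining four I would apply the $k$-linear functors $HH$, $HC$, $HP$, $HN$ to $C(A)\simeq C(k)\otimes HH_0(A)$. As $A$ is finite-dimensional over $k$, so is $HH_0(A)$, whence $-\otimes HH_0(A)$ is a finite direct sum and commutes with every additive functor; together with $HH(C(k))\simeq HH(k)$, $HC(C(k))\simeq HC(k)$, $HP(C(k))\simeq HP(k)$ and $HN(C(k))\simeq HN(k)$, this produces the stated formulas. The only points that need a little care -- and the closest thing to an obstacle -- are formal: checking that the truncation map genuinely respects the Connes operator, and recording that a morphism of mixed complexes which is a quasi-isomorphism on underlying complexes is a weak equivalence in $\cD(\Lambda)$. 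The substantive content is simply the classical vanishing $HH_{>0}(A)=0$ for separable $A$.
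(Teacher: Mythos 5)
Your proposal is correct and follows essentially the same route as the paper: the substantive input in both is the vanishing $HH_{>0}(A)=0$ for separable $A$ (which the paper extracts from Loday and you extract from projectivity of $A$ over $A\otimes A^{\op}$), upgraded to an explicit chain-level map compatible with the cyclic structure, from which all five statements follow. The only differences are cosmetic — you use the normalized mixed complex and the $(b,B)$ formalism and then deduce $HH$, $HC$, $HP$, $HN$ functorially from the single isomorphism $C(A)\simeq C(k)\otimes HH_0(A)$, whereas the paper writes the quasi-isomorphism on the unnormalized cyclic modules and passes through each bicomplex construction separately.
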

As a consequence of Theorem~\ref{thm:degree} one obtains the isomorphisms:
\begin{eqnarray*}
HH_j(A) \simeq \left\{ \begin{array}{ll}
A/[A,A] &   j=0 \\
0 &  \text{otherwise} \\
\end{array} \right. && 
HC_j(A) \simeq \left\{ \begin{array}{ll}
A/[A,A] & j\geq 0 \,\, \text{even}  \\
0 &  \text{otherwise} \\
\end{array} \right.
\end{eqnarray*}
\begin{eqnarray*}
HP_j(A) \simeq \left\{ \begin{array}{ll}
A/[A,A] &  j \,\,\text{even} \\
0 & j \,\,\text{odd} \\
\end{array} \right. && 
HN_j(A) \simeq \left\{ \begin{array}{ll}
A/[A,A] &  j\leq 0 \,\, \text{even}  \\
0 &  \text{otherwise} \\
\end{array} \right.\,.
\end{eqnarray*}
\begin{proof}
Recall from Loday \cite[\S1.1]{Loday} the construction of the Hochschild homology complexes $HH(A)$ and $HH(k)$. In particular, the $k$-vector spaces in degree $j$ are given by $A^{\otimes (j+1)}$ and $k^{\otimes (j+1)}$, respectively. Note that the assignments
\begin{eqnarray*}
A^{\otimes (j+1)} \too k^{\otimes (j+1)} \otimes A/[A,A] && a_0 \otimes \cdots \otimes a_j \mapsto 1 \otimes \cdots \otimes 1 \otimes (a_1 \cdots a_j)
\end{eqnarray*}
gives rise to a well-defined homomorphism of complexes
\begin{equation}\label{eq:homo-key}
HH(A) \too HH(k) \otimes A/[A,A]\,.
\end{equation}
By combining \cite[Thm.~1.2.13]{Loday} with \cite[Cor.~1.2.14]{Loday} (and their proofs), one observes that \eqref{eq:homo-key} is a quasi-isomorphism. Since $A/[A,A] = HH_0(A)$, we conclude then that $HH(A)$ is isomorphic to $HH(k)\otimes A/[A,A]$ in the derived category $\cD(k)$. Now, recall from \cite[\S2.5]{Loday} that $HH(A)$ and $HH(k) \otimes A/[A,A]$ are endowed with the following cyclic action
\begin{eqnarray}
t(a_0 \otimes \cdots \otimes a_j) & := & (-1)^j (a_n \otimes a_0 \otimes \cdots \otimes a_{j-1}) \nonumber\\
t(\lambda_0 \otimes \cdots \otimes \lambda_j \otimes a) & := & (-1)^j (\lambda_j \otimes \lambda_0 \otimes \cdots \otimes \lambda_{j-1} \otimes a)\,,
\label{eq:action2}
\end{eqnarray}
where $t$ stands for the generator of $\bbZ/(j+1)\bbZ$. The above homomorphism \eqref{eq:homo-key} preserves this cyclic action. As a consequence, following \cite[\S2.5]{Loday}, one obtains well-defined quasi-isomorphisms:
\begin{eqnarray}
C(A) & \too& \mathrm{Mix}(HH(k) \otimes HH_0(A))\nonumber \\
HC(A) & \too& \mathrm{Tot}(B(HH(k) \otimes HH_0(A))) \label{eq:star-final-2}\\
HP(A) & \too& \mathrm{Tot}(B^{\mathrm{per}}(HH(k) \otimes HH_0(A))) \label{eq:star-final-3}\\
HN(A) & \too& \mathrm{Tot}(B^-(HH(k) \otimes HH_0(A))) \label{eq:star-final-4}\,.
\end{eqnarray}
Some explanations are in order: $\mathrm{Mix}(-)$ stands for the mixed complex construction, $B(-), B^{\mathrm{per}}(-), B^-(-)$ for the cyclic, periodic and negative bicomplex constructions, and $\mathrm{Tot}(-)$ for the totalization functor. Thanks to the above action \eqref{eq:action2}, one observes that $\mathrm{Mix}(HH(k) \otimes HH_0(A))$ identifies with the mixed complex $HH_0(A) \otimes A/[A,A]$. As a consequence, $C(A)$ is isomorphic to $C(k) \otimes HH_0(A)$ in the derived category of mixed complexes $\cD(\Lambda)$. In what concerns \eqref{eq:star-final-2}, we have a canonical isomorphism between its right-hand-side and the complex $\mathrm{Tot}(B(HH(k)))\otimes A/[A,A]=HC(k) \otimes A/[A,A]$. This implies that $HC(A)$ is isomorphic to $HC(k) \otimes HH_0(A)$ in the derived category $\cD(k)$. Finally, in what concerns \eqref{eq:star-final-3}-\eqref{eq:star-final-4}, we have canonical quasi-isomorphisms
\begin{equation*}
\mathrm{Tot} (B^{\mathrm{per}}(HP(k))) \otimes A/[A,A] \too \mathrm{Tot} (B^{\mathrm{per}}(HP(k) \otimes A/[A,A])) 
\end{equation*}
\begin{equation*}
\mathrm{Tot} (B^-(HN(k))) \otimes A/[A,A] \too \mathrm{Tot} (B^-(HN(k) \otimes A/[A,A])) \,.
\end{equation*}
Making use of them, we conclude that $HP(A)$ (resp. $HN(A)$) is isomorphic to $HP(k) \otimes HH_0(k)$ (resp. to $HN(k) \otimes HH_0(k)$) in the derived category $\cD_{\bbZ/2}(k)$ (resp. $\cD(k)$). This achieves the proof.

\end{proof}
\begin{proposition}\label{prop:zero}
When $A$ is a central simple $k$-algebra, we have $HH_0(A)\simeq k$.
\end{proposition}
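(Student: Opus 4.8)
Recall that $HH_0(A) = A/[A,A]$, where $[A,A]\subseteq A$ is the $k$-linear span of the commutators $ab-ba$; equivalently, $[A,A]$ is the image of the $k$-linear map $\mu\colon A\otimes_k A\to A$, $a\otimes b\mapsto ab-ba$. The plan is to reduce to the split case by base change and then compute directly. First I would observe that $HH_0$ is compatible with field extensions: for any field extension $L/k$, writing $A_L:=A\otimes_k L$, the subspace $[A_L,A_L]$ is the image of $\mu\otimes_k L$, and since $L$ is flat (indeed free) over $k$ this image is $[A,A]\otimes_k L$. Hence $HH_0(A_L)\simeq HH_0(A)\otimes_k L$ as $L$-vector spaces. (This is a special case of the flat base-change property of Hochschild homology, but the elementary argument suffices here.)

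Next I would choose $L$ to be a splitting field of the central simple $k$-algebra $A$, so that $A_L\simeq M_n(L)$ with $n$ the degree of $A$. It then remains to identify $HH_0(M_n(L))$. The cleanest route is Morita invariance: since $HH$ is a derived Morita invariant (it is one of the additive invariants recalled in the introduction, and additive invariants send derived Morita equivalences to isomorphisms by Definition~\ref{def:additive}(i)), the Morita equivalence $M_n(L)\sim L$ yields $HH_0(M_n(L))\simeq HH_0(L)=L/[L,L]=L$. Alternatively one computes by hand that $[M_n(L),M_n(L)]$ equals the trace-zero subspace $\mathfrak{sl}_n(L)$: it is contained in $\mathfrak{sl}_n(L)$ because $\mathrm{tr}(ab)=\mathrm{tr}(ba)$, and it contains every off-diagonal matrix unit $e_{ij}=[e_{ii},e_{ij}]$ for $i\neq j$ as well as every $e_{ii}-e_{jj}=[e_{ij},e_{ji}]$, and these span $\mathfrak{sl}_n(L)$ — an argument valid in every characteristic. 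Either way, $HH_0(M_n(L))\simeq L$ is one-dimensional over $L$.

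Combining the two steps gives $\dim_k HH_0(A)=\dim_L HH_0(A_L)=\dim_L L=1$, hence $HH_0(A)\simeq k$. I do not anticipate a genuine obstacle: the only points requiring minimal care are the base-change identity $[A,A]\otimes_k L=[A_L,A_L]$ (immediate from flatness of $L$ over $k$) and the fact that the computation $[M_n(L),M_n(L)]=\mathfrak{sl}_n(L)$ is characteristic-free (which it is, by the explicit commutator identities above). If one wants an intrinsic description of the isomorphism, note that the reduced trace $\mathrm{Trd}\colon A\to k$ annihilates $[A,A]$ and is surjective, and the dimension count above shows that the induced map $HH_0(A)\to k$ is an isomorphism.
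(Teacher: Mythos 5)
Your proof is correct, but it follows a different route from the paper's. The paper identifies $HH_\ast(A)$ with $\mathrm{Tor}$ over the enveloping algebra $A^{\op}\otimes A$ (Loday, Prop.~1.1.13) and then invokes the $\otimes$-equivalence $\Mod(k)\simeq\Mod(A^{\op}\otimes A)$, $M\mapsto M\otimes A$, which holds because $A^{\op}\otimes A$ is a matrix algebra over $k$; this transports the computation to $HH_\ast(k)$ in one stroke and in fact yields $HH_j(A)\simeq HH_j(k)$ in all degrees, not just $j=0$. You instead work only with $HH_0=A/[A,A]$, descend the commutator subspace along a splitting field extension ($[A_L,A_L]=[A,A]\otimes_k L$ by flatness), and compute $[M_n(L),M_n(L)]=\mathfrak{sl}_n(L)$ explicitly by commutator identities valid in every characteristic; a dimension count then finishes. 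Your argument is more elementary and self-contained (no Tor interpretation, no module-category equivalence), and your closing remark that the reduced trace realizes the isomorphism $HH_0(A)\simeq k$ intrinsically is a nice addition; the trade-off is that it only produces the degree-zero statement, whereas the paper's method gives the whole Hochschild homology of $A$ --- though for the application at hand the higher degrees are already covered by Theorem~\ref{thm:degree}, so nothing is lost. Both proofs are valid.
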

\begin{proof}
As explained in \cite[\S III Thm.~5.13]{Knus}, we have a $\otimes$-equivalence of categories 
\begin{eqnarray}\label{eq:eq}
\Mod(k) \stackrel{\sim}{\too} \Mod(A^\op \otimes A) && M \mapsto M \otimes A\,.
\end{eqnarray}
Since $k$ is a field, \cite[Prop.~1.1.13]{Loday} implies the following isomorphisms
\begin{eqnarray}\label{eq:Tor}
HH_\ast(k) \simeq \text{Tor}_n^{\Mod(k)}(k,k) && HH_\ast(A) \simeq \text{Tor}_n^{\Mod(A^\op \otimes A)}(A,A)\,.
\end{eqnarray}
By combining \eqref{eq:eq}\textrm{-}\eqref{eq:Tor}, we then conclude that $HH_0(A) \simeq HH_0(k)\simeq k$.
\end{proof}
\subsection*{Proof of Theorem \ref{thm:computation}}
The proof follows from the combination of isomorphism \eqref{eq:iso-main-2} (with $E=C, HH, HC, HP$ and $HN$) with Theorems \ref{thm:degree} and \ref{prop:zero}.
\subsection*{Proof of Theorem \ref{thm:coefficients}}
Given a central simple $k$-algebra $A$ of degree $r$, \cite[Cor.~3.1]{TV} implies that $E(k) \simeq E(A)$ whenever $\mathsf{D}$ is $\bbZ[\frac{1}{r}]$-linear. The proof follows then from the combination of this fact with isomorphism \eqref{eq:iso-main-2}.
\subsection*{Proof of Theorem \ref{thm:computation-1}}
The proof follows from the combination of isomorphism \eqref{eq:iso-main-2} (with $E=C, HH,HC,HP$ of $HN$) with Theorem~\ref{thm:degree}.
\subsection*{Proof of Theorem \ref{thm:computation-direct}}
The proof follows from the combination of Theorem~\ref{thm:main}(ii) (with $E=C,HH,HC,HP$ and $HN$) with Theorem~\ref{thm:degree}.
\section{Noncommutative motives of central simple algebras}
Let us denote by $\mathrm{Br}(k)$ the Brauer group of $k$.
\begin{theorem}\label{thm:Brauer-aux}
Given central simple $k$-algebras $A$ and $B$, we have:
\begin{eqnarray*}
U(A) \simeq U(B) \Leftrightarrow [A]=[B] \in \mathrm{Br}(k)\,.
\end{eqnarray*} 
\end{theorem}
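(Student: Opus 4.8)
I would prove the two implications separately. The direction $[A]=[B] \Rightarrow U(A) \simeq U(B)$ is the easy one: if $A$ and $B$ are Brauer-equivalent central simple $k$-algebras, then $A \cong M_m(D)$ and $B \cong M_{m'}(D)$ for a common division algebra $D$, hence $A$, $B$ and $D$ are all derived Morita equivalent as dg categories (matrix algebras are Morita equivalent to the ground ring they are built over). Since $U:\dgcat \to \Hmo_0$ sends derived Morita equivalences to isomorphisms by condition (i) of Definition~\ref{def:additive} (indeed $U$ factors through $\Hmo$), we get $U(A) \simeq U(D) \simeq U(B)$.

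\textbf{The hard direction.} For $U(A) \simeq U(B) \Rightarrow [A]=[B]$, the strategy is to extract a Brauer-group invariant from the noncommutative motive. Recall from \eqref{eq:Hom-NChow} that $\Hom_{\NChow}(A,B) \simeq K_0 \cD_c(A^\op \otimes B) = K_0(A^\op \otimes B)$, since $A^\op \otimes B$ is a separable (hence finite-dimensional, semisimple) $k$-algebra. An isomorphism $U(A) \simeq U(B)$ in $\Hmo_0$ — equivalently in $\NChow$, as both are smooth and proper — is given by classes $\alpha \in K_0(A^\op \otimes B)$ and $\beta \in K_0(B^\op \otimes A)$ with $\beta \otimes^\bfL_B \alpha = [{}_{\id}\mathsf{B}] \in K_0(A^\op \otimes A)$ and $\alpha \otimes^\bfL_A \beta = [{}_{\id}\mathsf{B}] \in K_0(B^\op \otimes B)$. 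Now $A^\op \otimes B \cong M_s(D')$ for the division algebra $D'$ in the class $[A^\op \otimes B] = [B]-[A]$, so $K_0(A^\op \otimes B) \simeq \bbZ$ generated by the class $[P]$ of a simple module $P$, which is a $(A,B)$-bimodule. One then checks that the composition $\beta \otimes^\bfL_B \alpha$ being the identity forces, on the level of $\bbZ$-ranks and bimodule structure, that the simple $A^\op\otimes B$-module $P$ realizes $A$ and $B^\op$ as opposite endomorphism rings of the same module — concretely, $A \cong \End_B(P)$ with $P$ a progenerator, so $A$ is Morita equivalent to $B$, whence $[A]=[B]$ in $\mathrm{Br}(k)$. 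The cleanest way to package this: show that an isomorphism in $\NChow$ between two separable algebras that are simple forces a Morita equivalence, by tracking that the composite bimodule $P \otimes_B Q$ (where $Q$ generates $K_0(B^\op\otimes A)$) must be isomorphic to $A$ as an $(A,A)$-bimodule up to a positive integer multiple in $K_0$, and a $K_0$-equality of the form $n\cdot[A \otimes_A \text{(something)}] = [A]$ in $K_0(A^\op\otimes A)\simeq\bbZ$ forces $n=1$ and the bimodules to be honestly isomorphic (no negative or torsion phenomena since everything is a genuine bimodule, not a formal difference).

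\textbf{Main obstacle.} The delicate point is passing from the $K_0$-level identity $\beta\otimes^\bfL_B\alpha = [{}_{\id}\mathsf{B}]$ to an actual bimodule isomorphism exhibiting Morita equivalence. A priori $\alpha$ and $\beta$ are only classes in $K_0$, i.e. formal $\bbZ$-combinations; one must argue that — because $A^\op\otimes B$ is semisimple so $K_0$ is a free abelian group on the simple modules and every class of a genuine module is a non-negative combination — the invertibility relations force $\alpha$ and $\beta$ to be (up to sign, which is excluded by effectivity) the classes of single simple bimodules, and that these bimodules compose to the identity bimodule. I would likely cite or adapt an existing computation of $\Hom$-groups in $\NChow$ between central simple algebras (the paper's reference \cite{TV}, used in the proof of Theorem~\ref{thm:coefficients}, is the natural source) rather than redo it. An alternative, possibly slicker route: apply a suitable realization functor out of $\NChow$ that detects the Brauer class — for instance, the functor recording $K_0$ together with its module structure over $K_0(k)$ is too coarse, but one can use that $U(A)\simeq U(B)$ implies $A^\op\otimes B$ has a rank-one $K_0$, i.e. is a division algebra, which is itself equivalent to $[A]=[B]$. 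I would present the division-algebra criterion as the conceptual heart: $U(A)\simeq U(B)$ in $\NChow$ $\iff$ $\id_A$ factors through $U(B)$, and a rank count in $K_0(A^\op\otimes B)\otimes K_0(B^\op\otimes A) \to K_0(A^\op\otimes A) \cong \bbZ$ shows this is possible only when $A^\op\otimes B$ is simple of the form $M_\bullet(k)$, i.e. $[A]=[B]$.
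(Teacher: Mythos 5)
Your proposal matches the paper's proof in both directions: the easy implication via Morita equivalence of Brauer-equivalent algebras, and the hard implication by identifying $\Hom_{\NChow}(A,B)$ with $K_0(A^\op\otimes B)\simeq\bbZ$ and using that finitely generated projective modules over a central simple algebra are classified by their length, so that the invertible classes $\alpha,\beta$ are $\pm$ the classes of genuine bimodules whose composition relations then yield an honest Morita equivalence. The effectivity/uniqueness issue you single out as the main obstacle is precisely the ``preliminaries'' step of the paper's argument, so no gap remains.
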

\begin{proof}
We start with some preliminaries. Given a central simple $k$-algebra $C$, let us denote by $\mathsf{P}(C)$ the exact category of finitely generated projective right $C$-modules. As explained by Gille-Szamuely in \cite[Thm.~2.1.3]{Gille}, the Wedderburn theorem implies that $C \simeq M_{n \times n}(D)$ for some integer $m \geq 1$ and division algebra $D \supset k$. As a consequence, the right $C$-modules $M \in \mathsf{P}(C)$ are completely classified by their length. In particular, every element of the Grothendieck group $K_0\mathsf{P}(C)\simeq \bbZ$ can be written as $\pm[M]$ for a unique (up to isomorphism) right $C$-module $M \in \mathsf{P}(C)$.

Now, let us assume that $[A]=[B] \in \mathrm{Br}(k)$. This equality is equivalent to the existence of bimodules $\mathsf{B} \in \mathsf{P}(A^\op \otimes B)$ and $\mathsf{B}' \in \mathsf{P}(B^\op \otimes A)$ such that $\mathsf{B} \otimes_B \mathsf{B}' \simeq A$ in $\mathsf{P}(A^\op \otimes A)$ and $\mathsf{B}' \otimes_A\mathsf{B} \simeq B$ in $\mathsf{P}(B^\op \otimes B)$. In other words, the central simple $k$-algebras $A$ and $B$ are Morita equivalent. Making use of the canonical inclusions of categories $\mathsf{P}(A^\op \otimes B) \subset \cD_c(A^\op \otimes B)$ and $\mathsf{P}(B^\op \otimes A) \subset \cD_c(B^\op \otimes A)$, 
one then concludes that $A$ and $B$ are isomorphic in the homotopy category $\Hmo$. This implies automatically that $U(A) \simeq U(B)$ and so ($\Leftarrow$) is proved. 

Let us now prove ($\Rightarrow$). Assume that $U(A)\simeq U(B)$. By construction of Kontsevich's category of noncommutative Chow motives $\NChow$, there exist elements $\alpha \in K_0 \cD_c(A^\op \otimes B)$ and $\beta \in K_0 \cD_c(B^\op \otimes A)$ such that $\beta \circ \alpha =[A]$ in $K_0 \cD_c(A^\op \otimes A)$ and $\alpha \circ \beta = [B]$ in $K_0\cD_c(B^\op \otimes B)$. Making use of the isomorphisms
\begin{eqnarray*}
K_0 \mathsf{P}(A^\op \otimes B) \simeq K_0 \cD_c(A^\op \otimes B) && K_0 \mathsf{P}(B^\op \otimes A) \simeq K_0 \cD_c(B^\op \otimes A) \\
K_0 \mathsf{P}(A^\op \otimes A) \simeq K_0 \cD_c(A^\op \otimes A)  && K_0 \mathsf{P}(B^\op \otimes B) \simeq K_0 \cD_c(B^\op \otimes B) \,,
\end{eqnarray*}
one then concludes from above preliminaries that $\alpha = \pm [M]$ for a unique $A\text{-}B$-bimodule $M$ in $\mathsf{P}(A^\op \otimes B)$, that $\beta= \pm [N]$ for a unique $B\text{-}A$-bimodule $N$ in $\mathsf{P}(B^\op \otimes B)$, that $M \otimes_B N \simeq A$ in $\mathsf{P}(A^\op \otimes A)$, and that $N\otimes_A M \simeq B$ in $\mathrm{P}(B^\op \otimes B)$. This implies the equality $[A]=[B] \in \mathrm{Br}(k)$ and so the proof is finished.
\end{proof}
\begin{proposition}\label{prop:Brauer1}
Given central simple $k$-algebras $A_1, \ldots, A_n$, we have 
\begin{equation}\label{eq:sum}
\bigoplus_{i=1}^n U(A_i) \simeq \bigoplus_{i=1}^n U(k)
\end{equation} 
if and only if $[A_1] = \cdots = [A_n]=[k] \in \mathrm{Br}(k)$.
\end{proposition}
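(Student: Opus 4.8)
The plan is to prove the two implications separately. The implication from $[A_1]=\cdots=[A_n]=[k]$ to \eqref{eq:sum} is immediate: each such $A_i$ is split, so $[A_i]=[k]$ in $\mathrm{Br}(k)$, and Theorem~\ref{thm:Brauer-aux} gives $U(A_i)\simeq U(k)$ for every $i$; summing over $i$ yields \eqref{eq:sum}. All the content is in the converse, and the idea there is to convert an isomorphism $\phi\colon\bigoplus_{i=1}^n U(A_i)\stackrel{\sim}{\to}\bigoplus_{i=1}^n U(k)$ in $\NChow$ into an integral relation, one for each index $i$, forcing the Schur index of $A_i$ to equal $1$.

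First I would record the three relevant $\Hom$-groups, using \eqref{eq:Hom-NChow} and the structure theory of central simple algebras (a central simple $k$-algebra is a matrix algebra over a division algebra, so its module categories are semisimple with a single simple object up to isomorphism). For $A_i$ of degree $d_i:=\deg(A_i)$: one has $\Hom_{\NChow}(U(A_i),U(k))\cong K_0\cD_c(A_i^\op)\cong\bbZ$, generated by the class $[V_i]$ of the simple left $A_i$-module $V_i$ regarded as an $A_i\text{-}k$-bimodule; $\Hom_{\NChow}(U(k),U(A_i))\cong K_0\cD_c(A_i)\cong\bbZ$, generated by the class $[V_i']$ of the simple right $A_i$-module $V_i'$; and $\End_{\NChow}(U(A_i))\cong K_0\cD_c(A_i^\op\otimes A_i)\cong\bbZ$, generated by $\mathrm{id}_{U(A_i)}=[A_i]$ (here $A_i^\op\otimes A_i\cong M_{d_i^2}(k)$ is a matrix algebra and $A_i$ is its simple module). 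I would then write $\phi$ and $\phi^{-1}$ as matrices over these groups, with entries $\phi_{li}=a_{li}[V_i]$ and $(\phi^{-1})_{il}=b_{il}[V_i']$ for integers $a_{li},b_{il}$, and extract the $(i,i)$-entry of the identity $\phi^{-1}\circ\phi=\mathrm{id}$, namely $\sum_{l=1}^n(\phi^{-1})_{il}\circ\phi_{li}=\mathrm{id}_{U(A_i)}$.

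The key step is to evaluate the composite $[V_i']\circ[V_i]\in\End_{\NChow}(U(A_i))$. Since composition in $\Hmo_0$ is given by the derived tensor product of bimodules, this class is represented by the $A_i\text{-}A_i$-bimodule $V_i\otimes_k V_i'$. A dimension count then finishes it: $\dim_k V_i=\dim_k V_i'=d_i\cdot\mathrm{ind}(A_i)$, hence $\dim_k(V_i\otimes_k V_i')=d_i^2\cdot\mathrm{ind}(A_i)^2$, and as every module over $M_{d_i^2}(k)$ is a direct sum of copies of the $d_i^2$-dimensional simple module $A_i$, we get $V_i\otimes_k V_i'\cong A_i^{\oplus\mathrm{ind}(A_i)^2}$, i.e. $[V_i']\circ[V_i]=\mathrm{ind}(A_i)^2\cdot\mathrm{id}_{U(A_i)}$. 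Substituting into the relation above gives $\left(\sum_l a_{li}b_{il}\right)\cdot\mathrm{ind}(A_i)^2=1$ in $\bbZ$, so $\mathrm{ind}(A_i)=1$; that is, $A_i$ is split and $[A_i]=[k]$. As $i$ was arbitrary, the converse follows, and with it the proposition.

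The main obstacle is the bimodule bookkeeping in the last two paragraphs: correctly identifying the generators of the three $\Hom$-groups and, above all, checking that the ``round-trip'' composite $[V_i']\circ[V_i]$ is multiplication by $\mathrm{ind}(A_i)^2$ and not by a unit. It is exactly the appearance of the \emph{square} of the index here that makes the integrality argument work. Everything else is routine: matrix arithmetic in an additive category together with the structure theory of central simple algebras.
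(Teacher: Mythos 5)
Your proof is correct, and the converse direction is argued by a genuinely different route from the paper's. The paper deduces ($\Rightarrow$) formally: it computes $\End_{\NChow}(\oplus_{i=1}^n U(k))\simeq M_{n\times n}(\bbZ)$, transports the $n$ factor idempotents of $\oplus_i U(A_i)$ across the isomorphism, matches them (up to conjugation) with the standard idempotents of $M_{n\times n}(\bbZ)$ to conclude $U(A_i)\simeq U(k)$ for each $i$, and then invokes Theorem~\ref{thm:Brauer-aux} to pass to the Brauer group. You instead compute the three relevant $\Hom$-groups as copies of $\bbZ$ with explicit generators and show that the round-trip composite $[V_i']\circ[V_i]$ is $\mathrm{ind}(A_i)^2\cdot\id_{U(A_i)}$ (your dimension count is right: $\dim_k V_i=\dim_k V_i'=d_i\,\mathrm{ind}(A_i)$, and the diagonal bimodule $A_i$ is the $d_i^2$-dimensional simple module over $A_i^\op\otimes A_i\simeq M_{d_i^2}(k)$, so it is indeed the generator of $\End_{\NChow}(U(A_i))\simeq\bbZ$). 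The resulting integrality constraint $\bigl(\sum_l a_{li}b_{il}\bigr)\mathrm{ind}(A_i)^2=1$ forces each index to be $1$. Your argument is more quantitative and self-contained for this implication --- it bypasses both Theorem~\ref{thm:Brauer-aux} and the idempotent-lifting step (which in the paper is slightly delicate, since orthogonal idempotents of $M_{n\times n}(\bbZ)$ summing to the identity are only conjugate to the standard ones); the appearance of the \emph{square} of the index is exactly the ``reduced rank'' phenomenon that makes the divisibility argument close. The paper's route, on the other hand, isolates the pairwise statement $U(A)\simeq U(B)\Leftrightarrow[A]=[B]$ as a theorem of independent interest and reduces the proposition to it by purely additive-category bookkeeping.
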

\begin{proof}
Assume first that $[A_1]=\cdots = [A_n]=[k]$. Making use of Theorem~\ref{thm:Brauer-aux}, we conclude that $U(A_i)\simeq U(k)$ for every $i$ and consequently that \eqref{eq:sum} holds. This proves ($\Leftarrow$). Let us now prove the converse implication. Assume \eqref{eq:sum}. By combining isomorphism \eqref{eq:Hom-NChow} (with $\cA=\cB=k$) with the linearity of $\NChow$ and with the fact that $k$ is a field, we observe that 
\begin{equation}\label{eq:ring}
\mathrm{End}_{\NChow}(\oplus_{i=1}^n U(k)) \simeq M_{n \times n}(\bbZ)\,.
\end{equation}
Let $\{e_i\}_{i=1}^n$ (resp. $\{\overline{e_i}\}_{i=1}^n$) be the idempotent elements of $\mathrm{End}_{\NChow}(\bigoplus_{i=1}^n U(A_i))$ (resp. of $\mathrm{End}_{\NChow}(\bigoplus_{i=1}^n U(k))$) associated to the $n$ factors. Via \eqref{eq:sum}, the idempotents $\{e_i\}_{i=1}^n$ correspond to idempotent elements $\{\widetilde{e_i}\}_{i=1}^n$ of $\mathrm{End}_{\NChow}(\bigoplus_{i=1}^n U(k))$. Using \eqref{eq:ring} and the fact that $\sum_i \widetilde{e_i} =\id$, we conclude that $\widetilde{e_i}=\overline{e_j}$ for some $j$. This implies that $U(A_i)=U(k)$ for every $i$. Consequently, using Theorem~\ref{thm:Brauer-aux}, we obtain the equalities $[A_1]= \cdots =[A_n] =[k]$. This concludes the proof.
\end{proof}
\subsection*{Proof of Theorem \ref{thm:Brauer}}
The proof follows from the combination of isomorphism \eqref{eq:iso-main-2} (with $E=U$) with Proposition~\ref{prop:Brauer1}.


\begin{thebibliography}{00}


\bibitem{MT} M.~Bernadara and G.~Tabuada, {\em From semi-orthogonal decompositions to polarized intermediate Jacobians via Jacobians of noncommutative motives}. Available at arXiv:1305.4687.

\bibitem{Grothendieck} P.~Berthelot, A.~Grothendieck and L. Illusie, {\em Th{\'e}orie des intersections et th{\'e}or{\'e}me de Riemann-Roch}. S{\'e}minaire de G{\'e}om{\'e}trie Alg{\'e}brique du Bois-Marie 1966--1967 (SGA 6). Lecture Notes in Mathematics {\bf 225}. Springer-Verlag, Berlin-New York, 1971.

\bibitem{BM} A.~Blumberg and M.~Mandell, {\em Localization theorems in topological
Hochschild homology and topological cyclic homology}. Geometry and Topology {\bf 16} (2012), 1053--1120.    

\bibitem{CT1} D.-C.~Cisinski and G.~Tabuada, {\em Symmetric monoidal structure on nonommutative motives}. J. of $K$-Theory {\bf 9} (2012), no.~2, 201--268.   
   

\bibitem{Gille} P.~Gille and T.~Szamuely, {\em Central simple algebras and Galois cohomology}. 
Cambridge Studies in Advanced Mathematics {\bf 101}. Cambridge University Press, Cambridge (2006).

\bibitem{Kapranov} M.~Kapranov, {\em On the derived categories of coherent sheaves on some homogeneous spaces}. Inventiones Math. {\bf 92}, 479--508, (1988).    
    
\bibitem{ICM-Keller} B.~Keller, {\em On differential graded
    categories}. International Congress of Mathematicians (Madrid), Vol.~II,
  151--190, Eur.~Math.~Soc., Z{\"u}rich, 2006.

\bibitem{Exact} \bysame, {\em On the cyclic homology of exact categories}. JPAA {\bf 136} (1999), no.~1, 1--56.   

\bibitem{Exact2} \bysame, {\em On the cyclic homology of ringed spaces and schemes}. Doc. Math. {\bf 3} (1998), 231--259.


\bibitem{IAS} M.~Kontsevich, {\em Noncommutative motives}. Talk at the Institute for Advanced Study on the occasion of the $61^{\mathrm{st}}$ birthday of Pierre Deligne, October 2005. Video available at \url{http://video.ias.edu/Geometry-and-Arithmetic}.    

    
\bibitem{Miami} \bysame, {\em Mixed noncommutative motives}. Talk at the Workshop on Homological Mirror Symmetry. University of Miami. 2010. Notes available at \url{www-math.mit.edu/auroux/frg/miami10-notes}.  

\bibitem{finMot} \bysame, {\em Notes on motives in finite characteristic}.  Algebra, arithmetic, and geometry: in honor of Yu. I. Manin. Vol. II,  213--247, Progr. Math., {\bf 270}, BirkhŠuser Boston, MA, 2009.  

\bibitem{Knus} M.-A. Knus and M.~Ojanguren, {\em Th{\'e}orie de la descente et alg{\`e}bres d'Azumaya}. Lecture Notes in Mathematics {\bf 389}. Springer-Verlag, Berlin-New York (1974).

\bibitem{Loday} J.-L.~Loday, {\em Cyclic homology}. Grundlehren der Mathematischen Wissenschaften {\bf 301} (1992). Springer-Verlag, Berlin. 
  
\bibitem{Levine} M. Levine, S. Srinivas and J. Weyman, {\em $K$-theory of twisted grassmannians}. $K$-theory {\bf 3}, no.~2, 99--122 (1989).  
  
\bibitem{LO} V.~Lunts and D.~Orlov, {\em Uniqueness of enhancement for triangulated categories}. J. Amer. Math. Soc. {\bf 23} (2010), no.~3, 853--908. 



\bibitem{Quillen} D.~Quillen, {\em Higher algebraic $K$-theory I}. Lecture Notes in Mathematics {\bf 341}, 85-147 (1972).

\bibitem{MP} A.~Merkurjev and I.~Panin, {\em $K$-theory of algebraic tori and toric varieties}. $K$-Theory {\bf 12} (1997), no.~2, 101--143.

 
\bibitem{Swan} R.~G.~Swan, {\em $K$-theory of quadric hypersurfaces}. Annals of Mathematics {\bf 121}, 113--153, (1985). 
 
\bibitem{Negative} M.~Schlichting, {\em Negative $\mbox{K}$-theory of
    derived categories}. Math.~Z. {\bf 253} (2006), no.~1, 97--134.        

\bibitem{Panin} I.~Panin, {\em On the algebraic K-theory of twisted flag varieties}. $K$-Theory {\bf 8} (1994), no.~6, 541--585.

\bibitem{TV} G.~Tabuada and Michel Van den Bergh, {\em Noncommutative motives of Azumaya algebras}. Available at arXiv:1307.7946.

\bibitem{Products} G.~Tabuada, {\em Products, multiplicative Chern characters, and finite coefficients via noncommutative motives}. JPAA {\bf 217} (2013), 1279--1293.

\bibitem{survey} \bysame, {\em A guided tour through the garden of noncommutative motives}. Clay Mathematics Proceedings {\bf 16} (2012), 259--276.

\bibitem{Fundamental} \bysame, {\em The fundamental theorem via derived Morita invariance, localization, and $\mathbf{A}^1$-homotopy invariance}. Journal of $K$-Theory {\bf 9} (2012), no.~3, 407--420.

\bibitem{MacLane} \bysame, {\em Generalized spectral categories, topological
Hochschild homology, and trace maps}, Algebraic and Geometric Topology {\bf 10} (2010), 137--213.

\bibitem{Duke} \bysame, {\em Higher $K$-theory via universal invariants}. Duke Math. J. {\bf 145} (2008), no.~1, 121--206.

\bibitem{IMRN} \bysame, {\em Additive invariants of dg categories}.  Int. Math. Res. Not. {\bf 53} (2005), 3309--3339.   

\bibitem{Regularity} \bysame, {\em $E_n$-regularity implies $E_{n-1}$-regularity}. Available at arXiv:1212.1112. To appear in Documenta Mathematica.

\bibitem{Galois} \bysame, {\em Galois descent of additive invariants}. Available at arXiv:1301.1928.

\bibitem{TT} R.~W.~Thomason and T.~Trobaugh,
{\em Higher algebraic $K$-theory of schemes and of derived 
categories}. Grothendieck Festschrift, Volume III. Volume {\bf 88} of
Progress in Math., 247--436. Birkhauser, Boston, Bassel, Berlin, 1990. 

\bibitem{Wald} F.~Waldhausen, {\em Algebraic K-theory of spaces}.  Algebraic and Geometric topology (New Brunswick, N.~J., 1983), 318--419, Lecture Notes in Math., {\bf 1126}, Springer, Berlin, 1985.


\end{thebibliography}
\end{document}